\newcommand{\reals}{\mathbb{R}}
\newcommand{\complex}{\mathbb{C}}
\newcommand{\paraa}[1]{\big(#1\big)}
\newcommand{\sgn}{\operatorname{sgn}}
\newcommand{\Hom}{\operatorname{Hom}}
\newcommand{\spacearound}[1]{\quad#1\quad}
\newcommand{\equivalent}{\spacearound{\Leftrightarrow}}
\renewcommand{\implies}{\spacearound{\Rightarrow}}
\newtheorem{theorem}{Theorem}[section]
\newtheorem{corollary}[theorem]{Corollary}
\newtheorem{lemma}[theorem]{Lemma}
\newtheorem{proposition}[theorem]{Proposition}
\newtheorem{example}[theorem]{Example}
\theoremstyle{definition}
\newtheorem{definition}[theorem]{Definition}
\theoremstyle{remark}
\newtheorem{remark}[theorem]{Remark}
\numberwithin{equation}{section}
\newcommand{\A}{\mathcal{A}}
\newcommand{\B}{\mathcal{B}}
\renewcommand{\mid}{\mathds{1}}
\renewcommand{\d}{\partial}
\newcommand{\Der}{\operatorname{Der}}
\newcommand{\qand}{\quad\text{and}\quad}
\newcommand{\qqand}{\qquad\text{and}\qquad}
\newcommand{\g}{\mathfrak{g}}
\newcommand{\Omegag}[1]{\Omega^{#1}_{\g}}
\newcommand{\Omegabg}[1]{\bar{\Omega}^{#1}_{\g}}
\newcommand{\Omegaonebg}{\Omegabg{1}}
\newcommand{\Omegaonebgd}{(\Omegabg{1})^\ast}
\newcommand{\Omegaoneg}{\Omega^1_{\g}}
\newcommand{\Omegaonegh}{\widehat{\Omega}^1_{\g}}
\renewcommand{\emph}[1]{\textit{#1}}
\newcommand{\hh}{\hat{h}}
\newcommand{\hhi}{\hh^{-1}}
\newcommand{\hi}{h^{-1}}
\newcommand{\vh}{\varphi_h}
\newcommand{\nablat}{\tilde{\nabla}}
\newcommand{\id}{\operatorname{id}}
\newcommand{\Mat}{\operatorname{Mat}}
\newcommand{\Cg}{\mathscr{C}_{\g}}
\newcommand{\CgT}{\Cg^{\mathrm{T}}}
\newcommand{\CgLC}{\Cg^{\mathrm{LC}}}
\newcommand{\CgReg}{\Cg^\mathrm{Reg}}
\newcommand{\ZA}{Z(\A)}
\newcommand{\ZhA}{\hat{Z}(\A)}
\newcommand{\HomReg}{\Hom^{\mathrm{Reg}}}
\newcommand{\Mh}{\widehat{M}}
\newcommand{\thalf}{\tfrac{1}{2}}
\newcommand{\thetat}{\tilde{\theta}}
\newcommand{\X}{\mathcal{X}}
\title[]{On the existence of noncommutative Levi-Civita connections in derivation based calculi}
\date{}
\author{Joakim Arnlind}
\address[Joakim Arnlind]{Dept. of Math.\\
Link\"oping University\\
581 83 Link\"oping\\
Sweden}
\email{joakim.arnlind@liu.se}
\author{Victor Hildebrandsson}
\address[Victor Hildebrandsson]{Dept. of Math.\\
Link\"oping University\\
581 83 Link\"oping\\
Sweden}
\email{victor.hildebrandsson@liu.se}
\begin{document}

\begin{abstract}
    We study the existence of Levi-Civita connections, i.e torsion free connections compatible with a hermitian form, in the setting of derivation based noncommutative differential calculi over $\ast$-algebras. We prove a necessary and sufficient condition for the existence of Levi-Civita connections in terms of the image of an operator derived from the hermitian form. Moreover, we identify a necessary symmetry condition on the hermitian form that extends the classical notion of metric symmetry in Riemannian geometry. The theory is illustrated with explicit computations for free modules of rank three, including noncommutative 3-tori. We note that our approach is algebraic and does not rely on analytic tools such as $C^\ast$-algebra norms. 
\end{abstract}

\maketitle

\tableofcontents

\section{Introduction}

\noindent
Noncommutative geometry aims to understand geometric concepts in a noncommutative context. The transition from a commutative algebra of functions on a manifold to a noncommutative algebra representing an abstract "space" requires an algebraic formulation of geometry that allows for generalizations to a noncommutative context. For instance, the fact that the category of (locally compact Hausdorff) topological spaces is equivalent to the category of commutative $C^\ast$-algebras, or that the category of vector bundles is equivalent to the category of finitely generated projective modules, 
allow for a conceptual definition of the corresponding noncommutative objects. Having reasonable definitions at hand, it is a natural question to ask if classical theorems in geometry have noncommutative counterparts? Does one encounter noncommutative effects which opens up new questions to answer?

In particular, we are interested in metric, or Riemannian, aspects of the theory. A Riemannian manifold $(M,g)$ is a differentiable manifold $M$ together with a Riemannian metric $g$, i.e. an inner product on the tangent space at each point of $M$. In other words, if $\X(M)$ denotes the set of smooth vector fields on $M$ then
\begin{align*}
    g:\X(M)\times\X(M)\to C^\infty(M)   
\end{align*}
is a $C^\infty(M)$-bilinear nondegenerate symmetric form on $\X(M)$. A fundamental result in Riemannian geometry states that there exists a unique torsion free and metric connection $\nabla$ on $\X(M)$, i.e. a connection satisfying
\begin{align*}
    &\nabla_X Y-\nabla_Y X - [X,Y] = 0\\
    &X\paraa{g(Y,Z)} = g(\nabla_X Y,Z) + g(Y,\nabla_X Z)
\end{align*}
for $X,Y,Z\in\X(M)$. Equivalently, one may formulate the above in terms of a connection on the set of differential 1-forms $\Omega^1(M)$ as
\begin{align*}
    &(\nabla_{X}\omega)(Y) - (\nabla_{Y}\omega)(X)-d\omega(X,Y) = 0\\
    &X\paraa{g^{-1}(\omega,\eta)} = g^{-1}(\nabla_{X}\omega,\eta)+g^{-1}(\omega,\nabla_X\eta)
\end{align*}
for $X,Y\in\X(M)$ and $\omega,\eta\in\Omega^1(M)$, where $g^{-1}$ denotes the inverse metric on $\Omega^1(M)$, which turns out to be a more suitable formulation for noncommutative geometry.
The main goal of this paper is to obtain a better understanding of noncommutative Levi-Civita connections. 

Over the last decade, there has been quite some progress in the understanding of Riemannian aspects of noncommutative geometry. The question of noncommutative Levi-Civita connections can be asked in several different contexts; e.g. for quantum groups \cite{bm:starCompatibleConnections,mw:quantum.koszul,bm:Quantum.Riemannian.geometry,aw:metric.comp.lc.qg,a:lc.braided,ail:lc.quantum.spheres}, differential graded algebras and spectral triples \cite{bgl:lc.vector.fields,MR4074414,mr:existence.uniqueness.LC} and derivation based calculi \cite{r:leviCivita,aw:cgb.sphere,aw:curvature.three.sphere,a:levi-civita.class.nms,a:lc.kronecker} (cf. \cite{fmr:comparison.lc} for a recent comparison of different approaches, where the existence and uniqueness of Levi-Civita connections is studied in the context of centered bimodules). In general, these results give conditions for the existence of Levi-Civita connections for the corresponding concepts of connections, torsion and metric compatibility, depending on the particular way the noncommutative Riemannian geometry is realized. 

In this paper we would like to return to a so called derivation based approach to noncommutative geometry pioneered by M. Dubois-Violette \cite{dv:calculDifferentiel}, where a differential calculus over a $\ast$-algebra is defined by a choice of derivations. In this context there are natural and simple concepts of torsion and metric compatibility but, to the best of our knowledge, no general results concerning the existence of Levi-Civita connections. We emphasize that our approach and results are purely algebraic in nature and does not depend on any analytic structure of the algebra (e.g. the existence of a norm); in fact, we have previously considered examples that cannot be embedded into $C^\ast$-algebras due to the existence of hermitian elements that square to zero (cf. \cite{a:lc.kronecker}). We believe that this approach complements more analytic results (e.g. as found in
\cite{mr:existence.uniqueness.LC}).

The main objects of study in this paper are derivation based calculi, which consist of a unital $\ast$-algebra $\A$ together with a preferred choice of derivations $\g\subseteq\Der(\A)$. This data generates a differential graded algebra $\Omega_{\g}$ and the analogue of (hermitian) metrics is given by (invertible) hermitian forms on the set $\Omegaoneg$ of "noncommutative 1-forms". A noticable difference, compared to hermitian metrics arising on a Riemannian manifold, is a less degree of symmetry; by definition, a hermitian form satsifies
\begin{align*}
    h(\omega,\eta)^\ast = h(\eta,\omega)
\end{align*}
for $\omega,\eta\in\Omegaoneg$; however, if $(M,g)$ is a Riemanian manifold and $h(\omega,\eta)=g^{-1}(\bar{\omega},\eta)$ then 
\begin{align}\label{eq:extra.sym.h}
    h(\bar{\omega},\eta) = g^{-1}(\omega,\eta)=g^{-1}(\eta,\omega)
    =h(\bar{\eta},\omega)
\end{align}
due to the symmetry of the Riemannian metric $g$. Such a symmetry is in general not satisfied for a hermitian form on a module over a noncommutative algebra, and imposing it gives a rather restrictive condition. However, we will establish a weaker notion of symmetry (applied to the inverse of the hermitian form) which will be necessary for the existence of Levi-Civita connections, serving as a noncommutative replacement for the classical symmetry of the Riemannian metric.

The paper is organized as follows: After having introduced the basic concepts of derivation based calculi in Sections \ref{sec:nc.diff.forms} and \ref{sec:derivations.based.calculi}, we proceed to characterize the sets of torsion free and metric compatible connections in Sections \ref{sec:conn.comp.herm} and \ref{sec:torsionfree.conn}, respectively, as well as provide explicit constructions of all such connections. 
In Section \ref{sec:lc.connections} we combine these results to describe the set of Levi-Civita connections as the inverse image of a certain operator and we prove a necessary and sufficient condition for the existence of Levi-Civita connections. Consequently, we find a necessary condition for the hermitian form, which we interpret as a weak symmetry condition related to \eqref{eq:extra.sym.h}. In the case of free modules, this condition turns out to also be sufficient. Finally, in Section \ref{sec:lc.free.modules} we illustrate these concepts for free modules of rank $3$ (and, in particular, for noncommutative 3-tori) where Levi-Civita connections are explicitly computed for a class of hermitian forms.  

\section{Noncommutative differential forms}\label{sec:nc.diff.forms}

\noindent
Let us start by briefly recalling the construction of noncommutative differential forms in a derivation based calculus. To this end, let $\A$ be a unital associative $\ast$-algebra
over $\complex$, let $\ZA$ denote the center of $\A$ and let $\Der(\A)$ denote the set of derivations of $\A$. We shall consider $\Der(\A)$ to be a module over the commutative ring $Z(\A)$ in the standard way; i.e. $(z\cdot \d)(a)=z\d(a)$
for $z\in Z(\A)$, $a\in\A$ and $\d\in\Der(\A)$. Moreover, $\Der(\A)$ can be equipped with a $\ast$-structure $\ast:\Der(\A)\to\Der(\A)$ induced from $\A$ as
\begin{align*}
    \d^\ast(a) = \d(a^\ast)^\ast
\end{align*}
for $\d\in\Der(\A)$ and $a\in\A$, and we say that a subspace $U\subseteq\Der(\A)$ is $\ast$-closed if $\d\in U$ implies that $\d^\ast\in U$.

In the following, we will consider $\ast$-closed complex sub-Lie algebras $\g\subseteq\Der(\A)$, which are not necessarily $\ZA$-submodules of $\Der(\A)$. In this setting, we will consider additive maps $\omega:\g\to\A$ that are partially linear over $\ZA$ in the sense that
\begin{align*}
    \omega(z\d) = z\omega(\d)
\end{align*}
for $z\in\ZA$ and $\d\in\g$ whenever $z\d\in\g$. The set of such additive partially linear maps over $\ZA$ will be denoted by $\Hom_{\ZhA}(\g,\A)$. Since $\A$ is assumed to be a unital algebra and $\g$ a complex Lie algebra, it follows that $\Hom_{\ZhA}(\g,\A)\subseteq\Hom_{\complex}(\g,\A)$, with equality if the center of $\A$ is trivial. On the other hand, if $\g$ is a $\ZA$-module then $\Hom_{\ZhA}(\g,\A)=\Hom_{\ZA}(\g,\A)$.

One introduces noncommutative differential $k$-forms as follows (cf. \cite{dv:calculDifferentiel}).

\begin{definition}
    Let $\A$ be a unital $\ast$-algebra over $\complex$ and let $\g\subseteq\Der(\A)$ be a $\ast$-closed complex Lie algebra. Define $\Omegabg{k}$ be the set of alternating partially $Z(\A)$-multilinear maps $\omega:\g^k\to\A$, i.e.
    \begin{align*}
        &\omega(\d_1,\ldots,\d_k) = \sgn(\sigma)\omega(\d_{\sigma(1)},\ldots,\d_{\sigma(k)})\\
        &\omega(z\d,\d_2,\ldots,\d_k) = z\omega(\d,\d_2,\ldots,\d_k)
    \end{align*}
    for $\sigma\in S_k$, $z\in\ZA$ and $\{\d,\d_1,\ldots,\d_k\}\subseteq\g$ such that $z\d\in\g$ (where $S_k$ denotes the symmetric group of order $k$). Furthermore, we set
    \begin{align*}
        \Omegabg{} = \bigoplus_{k\geq 0}\Omegabg{k},      
    \end{align*}
    with $\Omegabg{0}=\A$.
\end{definition}

\noindent
Note that the dimension of the Lie algebra $\g$ bounds the maximum order of nonzero differential forms, due to the antisymmetry property. Namely, if $\dim(\g)=n$ then $\Omegabg{k}=0$ for $k>n$.
For $\omega\in\Omegabg{k}$ and
$\tau\in\Omegabg{l}$ one defines $\omega\tau\in\Omegabg{k+l}$ as
\begin{align*}
  (\omega\tau)(\d_1,\ldots,\d_{k+l}) =
  \frac{1}{k!l!}\sum_{\sigma\in S_{k+l}}\sgn(\sigma)\omega(\d_{\sigma(1)},\ldots,\d_{\sigma(k)})
  \tau(\d_{\sigma(k+1)},\ldots,\d_{\sigma(k+l)}),
\end{align*}
and introduces
$d_k:\Omegabg{k}\to\Omegabg{k+1}$ as $d_0a(\d_0)=\d_0(a)$ for
$a\in\Omegabg{0}=\A$ and
\begin{align*}
  d_k\omega(\d_0,\ldots,\d_{k}) =
  &\sum_{i=0}^k(-1)^i\d_i\paraa{\omega(\d_0,\ldots,\hat{\d}_{i},\ldots,\d_k)}\\
  &\qquad+\sum_{0\leq i<j\leq k}(-1)^{i+j}\omega\paraa{[\d_i,\d_j],\d_0,\ldots,\hat{\d}_i,\ldots,\hat{\d}_j,\ldots,\d_k},
\end{align*}
for $\omega\in\Omegabg{k}$ with $k\geq 1$, satisfying $d_{k+1}d_{k}=0$, where $\hat{\d}_i$ denotes the omission
of $\d_i$ in the argument. When there is no risk of confusion, we
shall omit the index $k$ and simply write
$d:\Omegabg{k}\to\Omegabg{k+1}$. For instance, if
$\omega\in\Omegabg{1}$ then
\begin{align*}
  d\omega(\d_0,\d_1) = \d_0\omega(\d_1)-\d_1\omega(\d_0)-\omega([\d_0,\d_1])
\end{align*}
and, in general, we say that $\omega\in\Omegabg{k}$ is closed if $d\omega=0$. 
Moreover, we note that the graded product rule is satisfied
\begin{align*}
  d(\omega\eta) = (d\omega)\eta + (-1)^k\omega d\eta 
\end{align*}
for $\omega\in\Omegabg{k}$ and $\eta\in\Omegabg{l}$. The above definitions make $\Omegabg{}$ into a differential graded algebra with $\deg(\omega)=k$ if $\omega\in\Omegabg{k}$.
The $\A$-bimodule structure of $\Omegabg{k}$ is given by
\begin{align*}
  &(a\omega)(\d_1,\ldots,\d_k) = a\omega(\d_1,\ldots,\d_k)\\
  &(\omega a)(\d_1,\ldots,\d_k) = \omega(\d_1,\ldots,\d_k)a
\end{align*}
for $a\in\A$, $\omega\in\Omegabg{k}$, and a $\ast$-structure may be introduced as
\begin{align*}
  \omega^\ast(\d_1,\ldots,\d_k) = \omega(\d_1^\ast,\ldots,\d_k^\ast)^\ast
\end{align*}
satisfying $(a\omega b)^\ast = b^\ast\omega^\ast a^\ast$, making
$\Omegabg{}$ into a $\ast$-bimodule with
\begin{align*}
  (\omega\eta)^\ast = (-1)^{kl}\eta^\ast\omega^\ast\qqand
  d(\omega^\ast) = (d\omega)^\ast
\end{align*}
for $\omega\in\Omegabg{k}$ and $\eta\in\Omegabg{l}$. 

We will consider the dual $\Omegaonebgd$ as the dual of $\Omegaonebg$ as a left module. That is, $\Omegaonebgd$ is a right module with
\begin{align*}
    (\phi a)(\omega) = \phi(\omega)a
\end{align*}
for $\phi\in\Omegaonebgd$, $\omega\in\Omegaoneg$ and $a\in\A$. 
One embeds $\g$ into $\Omegaonebgd$ through $\varphi:\g\to(\Omegaonebg)^\ast$, defined as
\begin{align*}\label{eq:def.varphi.into.dual}
    \varphi(\d)(\omega) = \omega(\d)
\end{align*}
for $\omega\in\Omegaonebg$;  it follows that $\varphi(z\d)=z\varphi(\d)$ for $\d\in\g$ and $z\in\ZA$ such that $z\d\in\g$. Note that $\varphi$ is indeed injective since
\begin{align*}
    \varphi(\d) = 0\implies
    \varphi(\d)(da)=0\,\,\forall a\in\A\implies
    \d a=0 \,\,\forall a\in\A \implies \d=0.
\end{align*}
In the following, we shall primarily be interested in a subalgebra $\Omegag{}\subseteq\Omegabg{}$, generated by the algebra $\A$; that is, one sets $\Omegag{0}=\A$ and
\begin{align*}
   \Omegag{k} = \{a_0da_1da_2\cdots da_k: a_i\in\A\text{ for }i=0,\ldots,k\}
\end{align*}
for $k\geq 1$. It is straightforward to check that $\Omegag{}=\bigoplus_{k\geq 0}\Omegag{k}$ is a differential graded subalgebra of $\Omegabg{}$ as well as a sub $\ast$-bimodule of $\Omegabg{}$. The differential graded algebra $\Omegag{}$ more closely resembles the algebra of differential forms on a manifold, where the differential forms are generated by the algebra of smooth functions.

\section{Derivation based calculi}\label{sec:derivations.based.calculi}

\subsection{Connections and hermitian forms}
In this section, we will introduce well-known notions of connections and hermitian forms in a notation which is adapted to the context of derivation based differential calculi. As we will be using homomorphims defined on the Cartesian products of left and right modules, we start by introducing the following notation.

\begin{definition}\label{def:HomAB}
    Let $\A,\B$ be rings and let $M$ be a left $\A$-module, let $N$ a right $\B$-module and let $S$ be a $(\A,\B)$-bimodule. Define
    \begin{align*}
        \Hom_{\A,\B}(M\times N,S)
    \end{align*}
    as the set of biadditive maps $f:M\times N\to S$ such that $f(am,nb) = af(m,n)b$
    for $m\in M$, $n\in N$, $a\in\A$ and $b\in\B$. Moreover, if $\A=\B$ then we write 
    \begin{align*}
        \Hom_{\A}(M\times N,S)\equiv\Hom_{\A,\A}(M\times N,S).
    \end{align*}
\end{definition}

\noindent
In the above notation, we shall freely consider left/right modules over a
commutative ring $\A$ as $\A$-bimodules.
Now, let $\A$ be a $\ast$-algebra and let $M$ be a left $\A$-module. One can define a right module structure on $M$ by setting
\begin{align*}
    m\cdot a = a^\ast m,
\end{align*}
for $a\in\A$ and $m\in M$, and the corresponding right module is denoted by $\Mh$ and called the conjugate module (note that, in general, $M$ is \emph{not} a bimodule with respect to the right module structure introduced above.). Similarly, for a right $\A$-module $M$
the conjugate module $\Mh$ is a left $\A$-module with $a\cdot m=ma^\ast$.

Let us now recall the concept of a connection on a left $\A$-module.
\begin{definition}\label{def_connection}
    Let $\A$ be a $\ast$-algebra and let $\g\subseteq\Der(\A)$ be a Lie algebra. A left $\g$-connection on a left $\A$-module $M$ is a map $\nabla:M\times\g\to M$ such that
    \begin{enumerate}
        \item
        $\nabla_\d(m+m')=\nabla_\d m+\nabla_\d m'$,
        
        \item
        $\nabla_{\d+\d'}m=\nabla_\d m+\nabla_{\d'}m$,
        
        \item
        $\nabla_\d(am)=a(\nabla_\d m)+(\d a)m$,
    \end{enumerate}
    for $m,m'\in M$, $\d,\d'\in\mathfrak{g}$, $a\in\mathcal{A}$, and
    \begin{enumerate}
        \setcounter{enumi}{3}
        \item $\nabla_{z\d}m=z\nabla_\d m$
    \end{enumerate}
    for $m\in M$, $\d\in\g$ and $z\in Z(\A)$ such that $z\d\in\g$.
\end{definition}

\noindent
When there is no risk of confusion, we shall often leave out the explicit reference to the Lie algebra $\g$ and simply say that $\nabla$ is a left connection on $M$. 

\begin{remark}
    The conditions in Definition~\ref{def_connection} implies that  $\g$-connections are elements of $\Hom_{\complex,\ZhA}(M\times\g,M)$.
\end{remark}

\noindent
As introduced above, given $\g\subseteq\Der(\A)$, it might happen that there exist no $\g$-connections on the module $M$. These modules are clearly not interesting from the point of view of finding Levi-Civita connections, and therefore we make the following definition.

\begin{definition}
    Let $\A$ be a $\ast$-algebra and let $\g\subseteq\Der(\A)$ be a Lie algebra. A left $\A$-module $M$ is called a left $\g$-connection module if there exists a left $\g$-connection $\nabla:M\times\g\to M$.
\end{definition}

\noindent
We denote the set of all $\g$-connections on $M$ by $\Cg(M)$.
Given two connections $\nabla,\nabla'\in\Cg(M)$ one notes that their difference 
$\alpha = \nabla-\nabla'$ satisfies
\begin{align*}
    &\alpha(m,\d+\d') = \alpha(m,\d) + \alpha(m,\d')\qquad
    \alpha(m+m',\d)=\alpha(m,\d)+\alpha(m'\d)\\
    &\alpha(am,\d)= a\alpha(m,\d)\qquad
    \alpha(m,z\d)=z\alpha(m,\d)
\end{align*}
for $a\in\A$, $m,m'\in M$, $\d,\d'\in\g$ and $z\in Z(\A)$ such that $z\cdot \d\in\g$. In other words, 
$\alpha\in\Hom_{\A,\ZhA}(M\times \g,M)$.
Conversely, given a connection $\nabla\in\Cg(M)$ and $\alpha\in\Hom_{\A,\ZhA}(M\times \g,M)$ it is easy to check that $\nabla+\alpha$ is a connection on $M$.
That is, the set $\Hom_{\A,\ZhA}(M\times \g,M)$ parametrizes all $\g$-connections on the $\g$-connection module $M$. More precisely, if $M$ is a $\g$-connection module and $\nabla^0\in\Cg(M)$ then $\phi_{\nabla^0}:\Hom_{\A,\ZhA}(M\times\g,M)\to\Cg(M)$ defined by
\begin{align*}
    \phi_{\nabla^0}(\alpha) = \nabla^0+\alpha
\end{align*}
is a bijection.

In noncommutative geometry, one considers (finitely generated projective) modules as analogues of vector bundles in differential geometry. Vector bundles can be equipped with hermitian metrics, and a natural noncommuatative generalization is that of hermitian forms. 

\begin{definition}\label{def:hermitian.form}
    Let $\A$ be a $\ast$-algebra and let $M$ be a left $\mathcal{A}$-module. A left hermitian form on $M$ is a map $h:M\times M\to\mathcal{A}$ such that
    \begin{enumerate}
        \item\label{eq:hform.additive}
        $h(m_1+m_2,m_3)=h(m_1,m_3)+h(m_2,m_3)$,

        \item\label{eq:hform.left.linear}
        $h(am_1,m_2,)=ah(m_1,m_2)$,

        \item\label{eq:hform.symmetry}
        $h(m_1,m_2)^\ast=h(m_2,m_1)$,
    \end{enumerate}
    for $m_1,m_2,m_3\in M$ and $a\in\A$. Furthermore, $h$ is called invertible if $\hh:M\to M^\ast$, defined by $\hh(m_1)(m_2) = h(m_2,m_1)$ for $m_1,m_2\in M$, is a bijection. An invertible left hermitian form on $M$ is called a left hermitian metric on $M$. 
\end{definition}

\begin{remark}
    Similarly, a right hermitian form $h$ on a right $\A$-module $M$ satisfies \eqref{eq:hform.additive} and \eqref{eq:hform.symmetry} in Definition~\ref{def:hermitian.form} together with $h(m_1,m_2a) = h(m_1,m_2)a$.
\end{remark}

\begin{remark}
    In the notation of Definition~\ref{def:HomAB}, left hermitian forms on a left $\A$-module $M$ are elements of $\Hom_{\A}(M\times\Mh,\A)$ satisfying $h(m_1,m_2)^\ast=h(m_2,m_1)$.    
\end{remark}

\noindent
Next, let us note a few properties of invertible hermitian forms.

\begin{proposition}\label{prop:h.properties}
    Let $h$ be a left hermitian form on a left $\A$-module $M$. Then
    \begin{align*}
        \hh(am)=\hh(m)a^\ast,
    \end{align*}
    for $m\in M$ and $a\in\A$. Moreover, if $h$ is invertible then
    \begin{align*}
        \hhi(\phi a)=a^\ast\hhi(\phi)
    \end{align*}
    for $\phi\in M^\ast$ and $a\in\A$.
\end{proposition}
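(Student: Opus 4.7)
The proof is a direct computation that only uses the three defining properties of a left hermitian form (additivity, left linearity, and the conjugate symmetry $h(m_1,m_2)^\ast=h(m_2,m_1)$) together with the definition $\hh(m_1)(m_2)=h(m_2,m_1)$ and the right action $(\phi a)(m)=\phi(m)a$ on $M^\ast$.

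For the first identity, I would evaluate $\hh(am)$ on an arbitrary $m_2\in M$. Using the definition of $\hh$, then the symmetry to move $am$ to the first slot, then the left linearity of $h$ to pull out $a$, and finally the symmetry once more to swap the arguments back, one gets
\begin{align*}
\hh(am)(m_2) = h(m_2,am) = h(am,m_2)^\ast = \bigl(a\,h(m,m_2)\bigr)^\ast = h(m,m_2)^\ast a^\ast = h(m_2,m)a^\ast,
\end{align*}
which equals $\hh(m)(m_2)\,a^\ast = \bigl(\hh(m)a^\ast\bigr)(m_2)$. Since this holds for every $m_2$, the identity $\hh(am)=\hh(m)a^\ast$ follows.

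For the second identity, assume $h$ is invertible and let $\phi\in M^\ast$ and $a\in\A$. Set $m=\hhi(\phi)$, so that $\phi=\hh(m)$. Applying the first identity with $a$ replaced by $a^\ast$ gives
\begin{align*}
\hh(a^\ast m) = \hh(m)(a^\ast)^\ast = \hh(m)a = \phi a.
\end{align*}
Applying $\hhi$ to both sides yields $a^\ast m = \hhi(\phi a)$, i.e.\ $\hhi(\phi a) = a^\ast\hhi(\phi)$.

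There is no real obstacle here; the only thing to be careful about is keeping track of the $\ast$ when interchanging the arguments of $h$, and correctly interpreting the right-module action on the dual. Both parts are just unpacking the definitions, with the second half reducing to the first by the bijectivity of $\hh$.
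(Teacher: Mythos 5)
Your proof is correct and follows essentially the same route as the paper: the first identity by evaluating $\hh(am)$ on an arbitrary element and using the symmetry and left linearity of $h$, and the second by writing $\phi a=\hh(a^\ast\hhi(\phi))$ and applying $\hhi$. You merely spell out the intermediate $\ast$-manipulations that the paper compresses into a single line.
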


\begin{proof}
    Let $m\in M$ and $a\in\A$. Then for any $n\in M$
    \[
    \hh(am)(n)=h(n,am)=h(n,m)a^\ast=\hh(m)(n)a^\ast.
    \]
    Hence $\hh(am)=\hh(m)a^\ast$. Also, since for any $\phi\in M^\ast$, $\hhi(\phi)\in M$, it follows that
    \[
    \phi a=\hh(\hhi(\phi))(a^\ast)^\ast=\hh(a^\ast\hhi(\phi))\equivalent \hhi(\phi a)=a^\ast\hhi(\phi).\qedhere
    \]
\end{proof}

\begin{remark}
    Note that Proposition~\ref{prop:h.properties} implies that $\hh$ is a left module homomorphism from $M$ to $\Mh^\ast$ (the conjugate of the dual module).
\end{remark}

\noindent
In case $h$ is invertible, one may define a hermitian form on the dual module as follows.

\begin{proposition}
    Let $h$ be an invertible left hermitian form on the left $\A$-module $M$. Define $h^{-1}:M^\ast\times M^\ast\to\A$ as
    \begin{align}\label{eq:def.h.inverse}
        h^{-1}(\phi_1,\phi_2) = h\paraa{\hhi(\phi_1),\hhi(\phi_2)}
    \end{align}
    for $\phi_1,\phi_2\in M^\ast$. Then $h^{-1}$ is a right hermitian form on the right $\A$-module $M^\ast$.
\end{proposition}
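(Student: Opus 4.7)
My plan is to verify the three defining properties of a right hermitian form on $M^\ast$ one at a time, all by direct reduction to the corresponding properties of $h$ via the isomorphism $\hhi:M^\ast\to M$. Before doing that, I need the fact that $\hhi$ is additive, which follows immediately from the additivity of $\hh$ (clear from $\hh(m)(n)=h(n,m)$ and biadditivity of $h$) together with its bijectivity.

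For biadditivity, $h^{-1}(\phi_1+\phi_2,\psi)=h(\hhi(\phi_1+\phi_2),\hhi(\psi))=h(\hhi(\phi_1)+\hhi(\phi_2),\hhi(\psi))$, which splits by biadditivity of $h$, and the same argument handles the second slot. For hermitian symmetry, the calculation is essentially one line:
\begin{align*}
    h^{-1}(\phi_1,\phi_2)^\ast = h\paraa{\hhi(\phi_1),\hhi(\phi_2)}^\ast
    = h\paraa{\hhi(\phi_2),\hhi(\phi_1)} = h^{-1}(\phi_2,\phi_1),
\end{align*}
using property \eqref{eq:hform.symmetry} of $h$.

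The only step that mixes several ingredients is the right $\A$-linearity in the second argument. I will start from $h^{-1}(\phi_1,\phi_2 a)=h(\hhi(\phi_1),\hhi(\phi_2 a))$ and apply Proposition~\ref{prop:h.properties} to rewrite $\hhi(\phi_2 a)=a^\ast\hhi(\phi_2)$. To move the $a^\ast$ out to the right, I combine properties \eqref{eq:hform.left.linear} and \eqref{eq:hform.symmetry} of $h$: for any $m,n\in M$,
\begin{align*}
    h(m,a^\ast n) = h(a^\ast n, m)^\ast = \paraa{a^\ast h(n,m)}^\ast = h(n,m)^\ast a = h(m,n)a.
\end{align*}
Applying this with $m=\hhi(\phi_1)$ and $n=\hhi(\phi_2)$ yields $h^{-1}(\phi_1,\phi_2 a)=h(\hhi(\phi_1),\hhi(\phi_2))a=h^{-1}(\phi_1,\phi_2)a$, as required.

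No step presents a genuine obstacle; the verification is a direct unpacking of the definition \eqref{eq:def.h.inverse}. The mildly nontrivial observation is that right-linearity on $M^\ast$ is obtained by combining the twisted behavior of $\hhi$ on products (Proposition~\ref{prop:h.properties}) with the left-conjugate-linearity of $h$ in its second slot, so that the two $\ast$'s cancel and leave a clean right multiplication.
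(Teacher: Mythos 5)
Your proof is correct and follows essentially the same route as the paper's: biadditivity via additivity of $\hhi$, symmetry directly from property \eqref{eq:hform.symmetry}, and right $\A$-linearity from $\hhi(\phi_2 a)=a^\ast\hhi(\phi_2)$ combined with the conjugate-linearity of $h$ in its second slot. The only difference is that you spell out the identity $h(m,a^\ast n)=h(m,n)a$ explicitly, which the paper leaves as an unstated one-line step.
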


\begin{proof}
    Since $\hhi$ is additive and $h$ is biadditive, it is clear that $\hi$ is biadditive. Since $h(m_1,m_2)^\ast=h(m_2,m_1)$ it immediately follows that $\hi(\phi_1,\phi_2)^\ast=\hi(\phi_2,\phi_1)$. Moreover,
    \begin{align*}
        \hi(\phi_1,\phi_2a) = h\paraa{\hhi(\phi_1),\hhi(\phi_2a)}
        =h\paraa{\hhi(\phi_1),a^\ast\hhi(\phi_2)} 
        = \hi(\phi_1,\phi_2)a
    \end{align*}
    showing that $\hi$ is a hermitian form on $M^\ast$.
\end{proof}

\noindent 
For future use, let us also note that 
\begin{align}
    \hi(\phi_1,\phi_2) = h\paraa{\hhi(\phi_1),\hhi(\phi_2)}
    =\hh\paraa{\hhi(\phi_2)}\paraa{\hhi(\phi_1)} 
    = \phi_2\paraa{\hhi(\phi_1)}. 
\end{align}
In differential geometry, a connection can preserve the metric on a vector bundle, and in the current setup one formulates this property as compatibility with a hermitian form. 

\begin{definition}
    Let $\A$ be a $\ast$-algebra and let $\g\subseteq\Der(\A)$ be a Lie algebra. Moreover, let $M$ be a left $\A$-module with a left hermitian form $h$. A left $\g$-connection $\nabla$ on $M$ is compatible with $h$ if
    \begin{align}
        \d h(m_1,m_2)=h(\nabla_\d m_1,m_2)+h(m_1,\nabla_{\d^\ast}m_2) \label{eq:metric.condition}   
    \end{align}
    for $m_1,m_2\in M$ and $\d\in\mathfrak{g}$. The set of $\g$-connections on $M$ compatible with $h$ will be denoted by $\Cg(M,h)$.
\end{definition}

\subsection{Derivation based calculi}

In classical geometry, a differentiable structure on a manifold is defined by a choice of an open covering together with local trivialization satisfying ceratin properties. In noncommutative geometry, where the starting point is the algebra of functions, rather than the underlying set, the analogue of a differentiable structure can be thought of in different ways. A common way of providing such a structure is by choosing a differential graded algebra with the algebra of functions in degree zero; this provides an analogue of differentiable forms. 

In a derivation based calculus, as we have seen in the previous sections, one defines a differential graded algebra by choosing a Lie algebra of derivations. Moreover, for convenience, as our main interest lies in connections, we shall also assume that there exist connections on $\Omegaoneg$. Hence, we make the following definition.

\begin{definition}
    A (left) derivation based calculus is a pair $(\A,\g)$ where $\A$ is a unital $\ast$-algebra over $\complex$ and $\g\subseteq\Der(\A)$ is a $\ast$-closed Lie algebra such that $\Omegaoneg$ is a (left) $\g$-connection module. 
\end{definition}

\begin{definition}
    A (left) derivation based calculus $(\A,\g)$ is called finitely generated if $\Omegaoneg$ is a finitely generated module, and projective if $\Omegaoneg$ is a projective (left) module.
\end{definition}

\noindent
With a view towards torsion free connections compatible with a hermitian form, we introduce hermitian calculi as derivation based calculi equipped with an invertible hermitian form.

\begin{definition}
    A (left) hermitian calculus is a triple $(\A,\g,h)$ such that $(\A,\g)$ is a (left) derivation based calculus and $h$ is an invertible (left) hermitian form on $\Omegaoneg$. 
\end{definition}

\subsection{Generalized metric symmetry}\label{sec:gen.metric.symmetry}

\noindent
Let us now compare our current setup with classical Riemannian geometry, and introduce a generalized symmetry condition for the hermitian form, which will later turn out to be necessary for finding Levi-Civita connections on hermitian calucli.

To this end, let $(M,g)$ be a Riemannian manifold. In this case, $\A=C^\infty(M,\complex)$ (with $\ast$-algebra structure given by pointwise complex conjugation) and
$\g=\Der(\A)$ is isomorphic to the set of complexified smooth vector fields $\X_{\complex}(M)$. Moreover, it follows that $\Omegaoneg=\Omegaonebg=\Omega^1_{\complex}(M)$, i.e. the complexification of the set of differential forms on $M$. It is clear that 
\begin{align*}
 \paraa{C^\infty(M,\complex),\Der(C^{\infty}(M,\complex))}
\end{align*}
is a derivation based calculus since there exist connections on the cotangent bundle of $M$.
The Riemannian metric $g$ is extended linearly to the complexification $\X_{\complex}(M)$ as
\begin{align*}
    g_\complex(X_1+iX_2,Y_1+iY_2) = g(X_1,Y_1)-g(X_2,Y_2)+i\paraa{g(X_1,Y_2)+g(X_2,Y_1)}
\end{align*}
for $X_1,X_2,Y_1,Y_2\in\X(M)$, satisfying $g_\complex(Z,W)^\ast=g_\complex(Z^\ast,W^\ast)$ for $Z,W\in\X_\complex(M)$.

Let $\nabla$ be the Levi-Civita connection on $(M,g)$. It can be extended to $\X_\complex(M)$ as
\begin{align*}
    \nabla_{X_1+iX_2}(Y_1+iY_2) = \nabla_{X_1}Y_1-\nabla_{X_2}Y_2
    +i\paraa{\nabla_{X_1}Y_2+\nabla_{X_2}Y_1}
\end{align*}
for $X_1,X_2,Y_1,Y_2\in\X(M)$, satisfying $(\nabla_Z W)^\ast=\nabla_{Z^\ast}W^\ast$ and
\begin{align*}
    &\nabla_Z W-\nabla_W Z-[Z,W]=0\\
    &Z\paraa{g_\complex(W_1,W_2)} = g_\complex\paraa{\nabla_{Z}W_1,W_2}+g_\complex\paraa{W_1,\nabla_Z W_2}
\end{align*}
for $Z,W,W_1,W_2\in\X_\complex(M)$. Next, one defines a connection on $\Omega^1_\complex(M)$ as
\begin{align*}
    (\nabla_{Z}\omega)(W) =Z\paraa{\omega(W)}-\omega\paraa{\nabla_Z W} 
\end{align*}
for $Z,W\in\X_\complex(M)$ and $\omega\in\Omega^1_\complex(M)$, satisfying 
$(\nabla_{Z}\omega)^\ast=\nabla_{Z^\ast}\omega^\ast$ and
\begin{align*}
    (\nabla_{Z_1}\omega)(Z_2)-(\nabla_{Z_2}\omega)(Z_1)-d\omega(Z_1,Z_2)=0
\end{align*}
for $Z_1,Z_2\in\X_\complex(M)$ and $\omega\in\Omega^1_\complex(M)$. Thus, $\nabla$ is a torsion free connection on $\Omega^1_\complex(M)$.

Since $g_\complex$ is nondegenerate it defines an invertible hermitian form $h$ on $\Omega^1_\complex(M)$ as
\begin{align}\label{eq:h.g.inv.def}
    h(\omega,\eta) = g_{\complex}^{-1}(\omega,\eta^\ast),
\end{align}
where $g_{\complex}^{-1}$ denotes the inverse (complexified) metric on $\Omega^1_{\complex}(M)$, and it is straightforward to show that
\begin{align*}
    Z\paraa{h(\omega,\eta)} = h\paraa{\nabla_Z\omega,\eta}+h\paraa{\omega,\nabla_{Z^\ast}\eta}.
\end{align*}
Hence, the Levi-Civita connection on $(M,g)$ induces a Levi-Civita connection on the hermitian calculus
\begin{align*}
    \paraa{C^\infty(M,\complex),\Der(C^{\infty}(M,\complex)),h}.
\end{align*}
Now, if $h$ is a hermitian form on $\Omega^1_{\complex}(M)$ induced from a metric as in \eqref{eq:h.g.inv.def} then $h^{-1}$ is a hermitian form on $\X_\complex(M)$ satisfying 
\begin{align*}
    h^{-1}(Z,W) = g_{\complex}(Z^\ast,W).
\end{align*}
Since $g_\complex$ is symmetric, it follows that $h^{-1}$ has an additional symmetry, namely
\begin{align*}
    0 = g_\complex(Z,W) - g_{\complex}(W,Z) = h^{-1}(Z^\ast,W)-h^{-1}(W^\ast,Z),
\end{align*}
which is not in general satisfied by a hermitian form and we introduce $\rho\in\Omega^2_{\complex}(M)$ as
\begin{align}\label{eq:rho.def.geom}
    \rho(Z,W) = h^{-1}(Z^\ast,W)-h^{-1}(W^\ast,Z)
\end{align}
measuring the failure of the symmetry property. If $h^{-1}$ is induced by a Riemannian metric as above, then $\rho=0$. In a noncommutative setting, demanding that $\rho=0$ is in general quite restrictive. However, it turns out that $d\rho=0$ is a suitable generalization of the symmetry condition in the sense that it will be a necessary condition for the existence of Levi-Civita connections (cf. Section~\ref{sec:lc.connections} and Proposition~\ref{prop:LC.necessary.cond}). 

\begin{definition}\label{def:weakly.symmetric}
    Let $(\A,\g,h)$ be a left hermitian calculus. The symmetry form $\rho\in\Omegabg{2}$ is defined as
    \begin{align}\label{eq:def.symmetric.form}
        \rho(\d_1,\d_2) = h^{-1}\paraa{\varphi(\d_1^\ast),\varphi(\d_2)}
    -h^{-1}\paraa{\varphi(\d_2^\ast),\varphi(\d_1)}
    \end{align}
    for $\d_1,\d_2\in\g$. If $\rho=0$ then $(\A,\g,h)$ is called symmetric and if $d\rho=0$ then $(\A,\g,h)$ is called weakly symmetric.
\end{definition}

\begin{remark}
    It is easy to check that $\rho$ is indeed an element of $\Omegabg{2}$:
    the antisymmetry is immediate and the partial $\ZA$-linearity follows from the (skew)linearity of $\varphi$ and $h^{-1}$. We also note that $\rho^\ast=\rho$.
\end{remark}

\begin{remark}
    Note that there is a close similarity between the concept of a symmetric hermitian calculus and the concept
    of quantum symmetry in \cite{bm:Quantum.Riemannian.geometry}. In that context, a metric is an (invertible) element $g\in\Omegaoneg\otimes_{\A}\Omegaoneg$ and corresponds to $h^{-1}$ (when taking the conjugation of one of the arguments into account). Then the quantum symmetry condition $\wedge g=0$ is analogous to \eqref{eq:def.symmetric.form}.
\end{remark}

\noindent
As in differential geometry, one notes that if $\dim(\g)\leq 2$ then $\Omegabg{3}=0$, implying that every symmetry form is closed and, hence, that $(\A,\g,h)$ is a weakly symmetric hermitian calculus. 

\section{Index calculus on projective modules}\label{sec:ind.cal.proj.mods}

\noindent 
Finitely generated projective modules are of particular interest in noncommutative geometry. Apart from having general algebraic features that make them tractable, they correspond to vector bundles in the classical geometric setting. One of the key features of finitely generated projective modules is the fact that there exists a dual basis, which allows one to do many computations with respect to a set of generators. In this section we collect a few computational formulas for hermitian forms on projective modules with respect to an arbitrary choice of generators, which will be of later use in the paper. (Note that similar formulas appear in the litterature in various formulations, see e.g. \cite{bm:starCompatibleConnections}.) 

To this end, assume that $M$ is a finitely generated projective left $\A$-module and let 
$\{\theta^i\}_{i=1}^N$ be a set of generators of $M$. Recall that a dual basis $\{\phi_i\}_{i=1}^N$ is a set of elements in $M^\ast$ such that
\begin{align*}
    m =\sum_{i=1}^N \phi_i(m)\theta^i\equiv \phi_i(m)\theta^i
    \qqand
    f = \sum_{i=1}^N\phi_if(\theta^i)\equiv\phi_i\psi(\theta^i)
\end{align*}
for all $m\in M$ and $f\in M^\ast$. As indicated above, we will in the following make use of a summation convention where repeated indices are summed over the appropriate range.

Let $\A^N$ denote a free module of rank $N$ with basis $\{e^i\}_{i=1}^N$; moreover, since the module is free there exist $\{e_i\}_{i=1}^N\subseteq (\A^N)^\ast$ such that $e_i(e^k)=\delta_i^k\mid$. Define $\pi:\A^N\to M$ by $\pi(e^i)=\theta^i$. Choosing a dual basis in $M^\ast$ is equivalent to choosing a section $s:M\to\A^N$ 
\begin{equation*}
    \begin{tikzcd}[every arrow/.append style={shift left}]
        \A^N \arrow{r}{\pi} & M \arrow{l}{{s\vphantom{1}}} 
    \end{tikzcd}
\end{equation*}
satisfying $\pi\circ s=\id_M$. Namely, given a section $s:M\to\A^N$ one considers
\begin{align*}
    s^\ast:(\A^N)^\ast\to M^\ast\qquad
    s^\ast(f)(m) = f\paraa{s(m)}
\end{align*}
for $f\in(\A^N)^\ast$ and $m\in M$, and defines $\phi_i = s^\ast(e_i)$. It follows that
\begin{align*}
    \phi_i(m)\theta^i = s^\ast(e_i)(m)\theta^i = e_i(s(m))\pi(e^i)
    =\pi\paraa{e_i(s(m))e^i} = \pi(s(m)) = m
\end{align*}
for $m\in M$, since $e_i(U)e^i=U$ for all $U\in\A^N$. Hence, $\{\phi_i\}_{i=1}^N$ is a dual basis. Conversely, given a dual basis $\{\phi_i\}_{i=1}^N$ one sets $s(m) = \phi_i(m)e^i$ and checks that
\begin{align*}
    (\pi\circ s)(m) = \pi\paraa{\phi_i(m)e^i} = \phi_i(m)\pi(e^i) = \phi_i(m)\theta^i = m.
\end{align*}
for all $m\in M$.

Furthermore, one defines $p=s\circ\pi:\A^N\to\A^N$, giving 
\begin{align*}
    p(e^i) = s\paraa{\pi(e^i)} = s(\theta^i) = \phi_k(\theta^i)e^k,
\end{align*}
and it follows that $p^2=p$ and $M\simeq p(\A^N)$.

Next, let us assume that $h$ is an invertible hermitian form on $M$ and set
\begin{align*}
    h^{ij} = h(\theta^i,\theta^j) \qqand
    h_{ij} = h^{-1}(\phi_i,\phi_j).
\end{align*}
Noting that 
\begin{align*}
    \hh(m)(m') = \hh\paraa{\phi_i(m')\theta^i,\phi_k(m)\theta^k}
    =\phi_i(m')h^{ik}\phi_k(m)^\ast
\end{align*}
one concludes that $\hh(m)=\phi_i h^{ik}\phi_k(m)^\ast$ for $m\in M$. Similarly, one finds that
\begin{align*}
    \hhi(f) = \phi_i\paraa{\hhi(f)}\theta^i=h^{-1}(f,\phi_i)\theta^i
    =h^{-1}\paraa{\phi_kf(\theta^k),\phi_i}\theta^i = f(\theta^k)^\ast h_{ki}\theta^i
\end{align*}
for $f\in M^\ast$.
The invertibility of $\hh$ may be expressed as
\begin{align*}
    \theta^i &= \hhi(\hh(\theta^i)) = \hhi\paraa{\phi_j h^{jk}\phi_k(\theta^i)^\ast}
    =\hhi\paraa{\phi_j h(\theta^j,\phi_k(\theta^i)\theta^k}
    =\hhi(\phi_jh^{ji})\\
    &=h^{ij}\hhi(\phi_j)
    =h^{ij}\phi_j(\theta^k)^\ast h_{kl}\theta^l
    =h^{ij}\hi\paraa{\phi_k\phi_j(\theta^k),\phi_l}\theta^l=h^{ij}h_{jl}\theta^l
\end{align*}
and, similarly, one finds that $\phi_i=\phi_jh^{jk}h_{ki}$ as a consequence of $\phi_i=\hh(\hhi(\phi_i))$. For convenience, we introduce 
\begin{align*}
    \theta_i = \hhi(\phi_i) = h_{ij}\theta^j\qand
    \phi^i = \hh(\theta^i) = \phi_jh^{ji},
\end{align*}
implying that
\begin{align*}
    \hh(m) = \phi^i\phi_i(m)^\ast\qand
    \hhi(f) = f(\theta^i)^\ast\theta_i
\end{align*}
for $m\in M$ and $f\in M^\ast$. 
As stated previously, $p=s\circ\pi:\A^N\to\A^N$ is a projection such that $p(\A^n)\simeq M$. Let us now show that this projection can also be expressed in terms of the hermitian form $h$. Namely, define $p_h:\A^N\to\A^N$ by
\begin{align*}
    p_h(U_ie^i) = U_ih^{ij}h_{jk}e^k.
\end{align*}
It follows that 
\begin{align*}
    p_h(U_ie^i) = U_ih^{ij}h_{jk}\theta^k
    &=U_ih\paraa{\phi_l(\theta^i)\theta^l,\theta^j}h_{jk}\theta^k
    =U_i\phi_l(\theta^i)h^{lj}h_{jk}\theta^k \\
    &=U_i\paraa{\phi_lh^{lk}h_{jk}}(\theta^i)\theta^k
    =U_i\phi_k(\theta^i)\theta^k = p(U_ie^i),
\end{align*}
showing that $p_h(U)=p(U)$.

Now, let us turn to the case when $M=\Omegaoneg$. One writes
\begin{align*}
    \varphi(\d) = \phi_i\varphi(\d)(\theta^i) = \phi_i\theta^i(\d)
\end{align*}
giving 
\begin{align*}
    \hi\paraa{\varphi(\d_1^\ast),\varphi(\d_2)}
    &=\hi\paraa{\phi_i\theta^i(\d_1^\ast),\phi_j\theta^j(\d_2)}
    =(\theta^i)^\ast(\d_1)h_{ij}\theta^j(\d_2)\\
    &=(h_{ji}\theta^i)^\ast(\d_1)\theta^j(\d_2)
    =\theta_i^\ast(\d_1)\theta^i(\d_2).
\end{align*}
The above computation gives the following expression for the symmetry form in terms of the generators:
\begin{align*}
    \rho(\d_1,\d_2) 
    &= h^{-1}\paraa{\varphi(\d_1^\ast),\varphi(\d_2)}
    -h^{-1}\paraa{\varphi(\d_2^\ast),\varphi(\d_1)}\\
    &= \theta_i^\ast(\d_1) \theta^i(\d_2)-\theta_i^\ast(\d_2) \theta^i(\d_1)
    =(\theta_i^\ast\theta^i)(\d_1,\d_2)
\end{align*}
that is, $\rho=\theta_i^\ast \theta^i$.

\begin{lemma}\label{lemma:rho.in.generators}
    Let $(\A,\g,h)$ be a finitely generated projective hermitian calculus and let $\{\theta^i\}_{i=1}^N$ be a set of generators for $\Omegaoneg$. Then
    \begin{align}\label{eq:drho.in.generators}
        &d\rho = \theta_i^\ast (dh^{ij})\theta_j+(d\theta^i)^\ast\theta_i-\theta_i^\ast d\theta^i.
    \end{align}
\end{lemma}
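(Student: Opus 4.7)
The plan is to compute $d\rho$ directly by applying the graded Leibniz rule to the expression $\rho = \theta_i^\ast \theta^i$ derived above the lemma, and to reduce the problem to a single auxiliary ``duality identity'' relating $(\theta^i)^\ast(dh_{ij})\theta^j$ and $\theta_i^\ast(dh^{ij})\theta_j$. It is convenient to first rewrite $\rho$ in the equivalent form $\rho = (\theta^i)^\ast \theta_i$; this equality holds because $\theta_j^\ast = (\theta^i)^\ast h_{ij}$, which follows immediately from $\theta_j = h_{jk}\theta^k$ and $h_{jk}^\ast = h_{kj}$. Working from this form avoids having to take $\ast$ of a 2-form, and the graded Leibniz rule together with $d((\theta^i)^\ast) = (d\theta^i)^\ast$ gives
\[
d\rho = (d\theta^i)^\ast\theta_i - (\theta^i)^\ast d\theta_i.
\]

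Next I would expand $d\theta_i = (dh_{ij})\theta^j + h_{ij}d\theta^j$ and apply $(\theta^i)^\ast h_{ij} = \theta_j^\ast$ to obtain $(\theta^i)^\ast d\theta_i = (\theta^i)^\ast(dh_{ij})\theta^j + \theta_i^\ast d\theta^i$, so that
\[
d\rho = (d\theta^i)^\ast\theta_i - (\theta^i)^\ast(dh_{ij})\theta^j - \theta_i^\ast d\theta^i.
\]
Comparing with the claim, it remains to establish the duality identity $(\theta^i)^\ast(dh_{ij})\theta^j + \theta_i^\ast(dh^{ij})\theta_j = 0$. For this, I would start from the relation $\theta^i = h^{ij}\theta_j$ (derived in Section~\ref{sec:ind.cal.proj.mods}), apply $d$, and contract with $\theta_i^\ast$ on the left; using $\theta_i^\ast h^{ij} = (\theta^j)^\ast$ yields
\[
\theta_i^\ast d\theta^i = \theta_i^\ast(dh^{ij})\theta_j + (\theta^j)^\ast d\theta_j.
\]
Expanding $(\theta^j)^\ast d\theta_j$ in the same manner (using $(\theta^j)^\ast h_{jk} = \theta_k^\ast$) produces a second copy of $\theta_i^\ast d\theta^i$ on the right that cancels the one on the left, leaving precisely the desired identity.

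The main challenge in executing this plan is notational rather than conceptual: one must resist cancelling products such as $h^{ij}h_{jk}$ against $\delta^i_k \mid$, since this identity holds only in the free case whereas the lemma is stated for general finitely generated projective modules. All simplifications must instead be routed through the duality relations $(\theta^i)^\ast h_{ij} = \theta_j^\ast$ and $\theta_i^\ast h^{ij} = (\theta^j)^\ast$, which are valid in the projective setting because they are proved at the level of $\Omegaoneg$ rather than at the level of coefficient matrices in $\A$.
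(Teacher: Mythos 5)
Your proof is correct and follows the same overall strategy as the paper: apply the graded Leibniz rule to $\rho=(\theta^j)^\ast\theta_j$ (the paper writes this as $(\theta^j)^\ast h_{ji}\theta^i$, which is the same thing), arrive at $d\rho=(d\theta^i)^\ast\theta_i-(\theta^i)^\ast(dh_{ij})\theta^j-\theta_i^\ast d\theta^i$, and then reduce everything to the single identity $(\theta^i)^\ast(dh_{ij})\theta^j=-\theta_i^\ast(dh^{ij})\theta_j$. Where you differ is in how that identity is established. The paper substitutes $(\theta^j)^\ast=\theta_k^\ast h^{kj}$ and $\theta^i=h^{il}\theta_l$ and applies the Leibniz rule to the triple product $h^{kj}h_{ji}h^{il}$; this requires the step $\theta_k^\ast\, d(h^{kj}h_{ji}h^{il})\theta_l=\theta_k^\ast(dh^{kl})\theta_l$, which is not a coefficient-level identity in the projective case (where $h^{kj}h_{ji}h^{il}\neq h^{kl}$ in general) and needs the extra observation that $(h^{kj}h_{ji}h^{il}-h^{kl})\theta_l=0$ and $\theta_k^\ast(h^{kj}h_{ji}h^{il}-h^{kl})=0$ together kill the middle term. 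Your route --- differentiating $\theta^i=h^{ij}\theta_j$, contracting with $\theta_i^\ast$, iterating once and cancelling the two copies of $\theta_i^\ast d\theta^i$ --- avoids this subtlety entirely and only ever invokes the duality relations $(\theta^i)^\ast h_{ij}=\theta_j^\ast$ and $\theta_i^\ast h^{ij}=(\theta^j)^\ast$, which, as you correctly note, hold at the level of $\Omegaoneg$ for arbitrary finitely generated projective modules. This makes your argument slightly cleaner in the projective setting, at the cost of nothing; both proofs are otherwise equivalent in content and length.
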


\begin{proof}
    Since $\rho=\theta_i^\ast\theta^i$ one finds that
    \begin{align*}
        d\rho 
        &= d\paraa{\theta_i^\ast\theta^i}
        = d\paraa{(\theta^j)^\ast h_{ji}\theta^i}
        =(d\theta^j)^\ast \theta_j-(\theta^j)^\ast d(h_{ji}\theta^i)\\
        &=(d\theta^j)^\ast \theta_j-(\theta^j)^\ast\paraa{(dh_{ji})\theta^i+h_{ji}d\theta^i}\\
        &=-(\theta^j)^\ast(dh_{ji})\theta^i+(d\theta^j)^\ast \theta_j-\theta_i^\ast d\theta^i.
    \end{align*}
    Rewriting the first term gives
    \begin{align*}
        (\theta^j)^\ast(dh_{ji})\theta^i
        &=\theta_k^\ast h^{kj}(dh_{ji})h^{il}\theta_l\\
        &=\theta_k^\ast d(h^{kj}h_{ji}h^{il})\theta_l
        -\theta_k^\ast(dh^{kj})h_{ji}h^{il}\theta_l
        -\theta_k^\ast h^{kj}h_{ji}(dh^{il})\theta_l\\
        &=\theta_k^\ast(dh^{kl})\theta_l - \theta_k^\ast(dh^{kj})\theta_j - \theta_i^\ast(dh^{il})\theta_l
        =-\theta_i^\ast(dh^{ij})\theta_j
    \end{align*}
    implying that
    \begin{align*}
        d\rho
        = \theta_i^\ast(dh^{ij})\theta_j+(d\theta^j)^\ast \theta_j-\theta_i^\ast d\theta^i,
    \end{align*}
    proving \eqref{eq:drho.in.generators}.
\end{proof}

\noindent
For easy reference, let us compile a list of the most relevant formulas: 
\begin{alignat}{2}
    &h^{ij}h_{jk}\theta^k = \theta^i
    &&\phi_i h^{ij}h_{jk} = \phi_k\\
    &\hh(m) = \phi_i\phi^i(m)^\ast
    &\qquad&\hhi(f)=f(\theta^i)^\ast \theta_i\\
    &\rho = \theta_i^\ast\theta^i
    &&d\rho = \theta_i^\ast (dh^{ij})\theta_j+(d\theta^i)^\ast\theta_i-\theta_i^\ast d\theta^i,
\end{alignat}
for $m\in M$ and $f\in M^\ast$.

\section{Connections compatible with hermitian forms}\label{sec:conn.comp.herm}

\noindent
Having all the basic definitions in place, we proceed to study the existence of connections compatible with a hermitian form. In the context of derivation based calculi, the existence of compatible connections on projective modules is well-known (see e.g. \cite{a:levi-civita.class.nms} for a result closely related to the current situation). However, for our purposes, we would like to present a formulation which is not dependent on the projectivity of the module and which is more adapted to our context. We start by introducing the necessary notation.

Let $M$ be a left $\A$-module. Given a left hermitian form $h$ and 
\begin{align*}
    \alpha\in\Hom_{\complex,\ZhA}(M\times\g,M)    
\end{align*}
we define $h_{\alpha}(m_1,m_2)\in\Omegabg{1}$ as
\begin{align*}
    &h_{\alpha}(m_1,m_2)(\d) = h\paraa{\alpha(m_1,\d),m_2} 
\end{align*}
for $m_1,m_2\in M$ and $\d\in\g$, implying that $h_\alpha\in\Hom_{\complex,\A}(M\times \Mh,\Omegabg{1})$. Moreover, if $\alpha\in\Hom_{\A,\ZhA}(M\times\g,M)$ then $h_{\alpha}\in\Hom_{\A}(M\times\Mh,\Omegabg{1})$.
Furthermore, define $dh=d\circ h\in\Hom_{\complex}(M\times\Mh,\Omegabg{1})$, giving 
\begin{align*}
    (dh)(m_1,m_2)(\d) = d(h(m_1,m_2))(\d)=\d h(m_1,m_2),
\end{align*}
and for $L\in\Hom_{\complex}(M\times\Mh,\Omegabg{1})$, define $L^\ast\in\Hom_{\complex}(M\times\Mh,\Omegabg{1})$ as
\begin{align*}
    L^\ast(m_1,m_2) = L(m_2,m_1)^\ast,
\end{align*}
from which it immediately follows that $(L^\ast)^\ast=L$. One checks that
\begin{align*}
    (dh)^\ast(m_1,m_2)(\d) &= \paraa{dh(m_2,m_1)}^\ast(\d)
    =\paraa{dh(m_2,m_1)(\d^\ast)}^\ast\\
    &= \paraa{\d^\ast h(m_2,m_1)}^\ast
    =\d h(m_1,m_2) = (dh)(m_1,m_2)(\d),
\end{align*}
that is, $(dh)^\ast=dh$. 

Recall that a (left) connection $\nabla$ on $M$ is compatible with $h$ if 
\begin{align}\label{eq:h.comp.again}
        \d h(m_1,m_2)=h(\nabla_\d m_1,m_2)+h(m_1,\nabla_{\d^\ast}m_2)
\end{align}
for $\d\in\g$ and $m_1,m_2\in M$. In the notation introduced above one can rewrite \eqref{eq:h.comp.again} as
\begin{align}
    dh = h_{\nabla} + h_{\nabla}^\ast.
\end{align}
In the following, we will prove that given an arbitrary connection on $M$ and an invertible hermitian form, one can construct a connection that is compatible with $h$. Let us start with the following lemma.

\begin{lemma}\label{lemma_phi.alpha.L}
    Let $(\A,\g)$ be a derivation based calculus and let $h$ be an invertible left hermitian form on the left $\A$-module $M$.
    Moreover, for $L\in\Hom_{\complex,\A}(M\times\Mh,\Omegabg{1})$, $m\in M$ and $\d\in\g$, define
    \begin{align*}
        &\phi^L_{m,\d}(m') = \paraa{L(m,m')(\d)}^\ast
    \end{align*}
    for $m'\in M$. Then $\phi^L_{m,\d}\in M^\ast$ and $L=h_{\alpha_L}$ with $\alpha_L(m,\d) = \hhi(\phi^L_{m,\d})$.
\end{lemma}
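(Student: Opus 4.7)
The plan is to verify the lemma in three short steps: (1) that $\phi^L_{m,\d}\in M^\ast$, (2) that $\alpha_L\in\Hom_{\complex,\ZhA}(M\times\g,M)$ so that $h_{\alpha_L}$ is well-defined, and (3) that $L=h_{\alpha_L}$.

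For (1), biadditivity of $L$ immediately gives additivity of $\phi^L_{m,\d}$. To check $\A$-linearity on the left, I would translate the right $\A$-action on $\Mh$ into the left $\A$-action on $M$: since $am'\in M$ corresponds to $m'\cdot a^\ast\in\Mh$, the right $\A$-linearity of $L$ in its second slot yields $L(m,am')=L(m,m')\, a^\ast$ in $\Omegabg{1}$, and therefore
\begin{align*}
    \phi^L_{m,\d}(am')
    =\paraa{L(m,m')(\d)\,a^\ast}^\ast
    =a\,\phi^L_{m,\d}(m').
\end{align*}

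For (2), additivity of $\alpha_L$ in $m$ is clear from biadditivity of $L$ and linearity of $\hhi$. The substantive point is partial $\ZA$-linearity in $\d$. For $z\in\ZA$ with $z\d\in\g$, using that $z^\ast\in\ZA$ is also central, I would compute
\begin{align*}
    \phi^L_{m,z\d}(m')
    =\paraa{zL(m,m')(\d)}^\ast
    =L(m,m')(\d)^\ast\,z^\ast
    =\paraa{\phi^L_{m,\d}\, z^\ast}(m'),
\end{align*}
and then apply Proposition~\ref{prop:h.properties} to obtain
\begin{align*}
    \alpha_L(m,z\d)
    =\hhi\paraa{\phi^L_{m,\d}\, z^\ast}
    =z\,\hhi(\phi^L_{m,\d})
    =z\,\alpha_L(m,\d).
\end{align*}

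For (3), the key identity to exploit is $h(m_1,m_2)=\hh(m_1)(m_2)^\ast$, which follows from combining the defining relation $\hh(m_1)(m_2)=h(m_2,m_1)$ with the hermitian symmetry of $h$. Together with $\hh\circ\hhi=\id$, unwinding definitions gives
\begin{align*}
    h_{\alpha_L}(m_1,m_2)(\d)
    =h\paraa{\hhi(\phi^L_{m_1,\d}),\,m_2}
    =\phi^L_{m_1,\d}(m_2)^\ast
    =L(m_1,m_2)(\d),
\end{align*}
so $L=h_{\alpha_L}$ as elements of $\Hom_{\complex,\A}(M\times\Mh,\Omegabg{1})$.

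The only step requiring any care is (1), where one must correctly track the correspondence between the left $\A$-action on $M$ and the right $\A$-action on the conjugate module $\Mh$; steps (2) and (3) then reduce to direct bookkeeping built on Proposition~\ref{prop:h.properties} and the hermitian symmetry of $h$.
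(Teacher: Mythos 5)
Your proposal is correct and follows essentially the same route as the paper's proof: the same translation of the left action on $M$ into the right action on $\Mh$ to get $\phi^L_{m,\d}\in M^\ast$, the same use of Proposition~\ref{prop:h.properties} for the partial $\ZA$-linearity of $\alpha_L$, and the same unwinding of $\hh\circ\hhi=\id$ together with hermitian symmetry to conclude $h_{\alpha_L}=L$. The only (harmless) omission is that you do not explicitly record the complex scalar behaviour $\alpha_L(\lambda m,\d)=\lambda\alpha_L(m,\d)$, which the paper verifies alongside the $z\d$ computation.
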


\begin{proof}
    Let us first show that $\phi^L_{m,\d}\in M^\ast$. It is clear that $\phi^L_{m,\d}$ is additive since $L$ is additive in both arguments. Furthermore, for $a\in\A$ and $m'\in M$ one computes
    \begin{align*}
        \phi^L_{m,\d}(am') &= L(m,am')(\d)^\ast = L(m,m'\cdot a^\ast)(\d)^\ast
        =\paraa{L(m,m')a^\ast}(\d)^\ast\\
        &= \paraa{L(m,m')(\d)a^\ast}^\ast
        =aL(m,m')(\d)^\ast = a\phi^L_{m,\d}(m')
    \end{align*}
    showing that $\phi^L_{m,\d}$ is a left module homomorphism. Hence, $\phi^L_{m,\d}\in M^\ast$. Moreover, we note that for $\lambda\in\complex$, $\d\in\g$ and $z\in\ZA$ such that $z\d\in\g$
    \begin{align*}
        \phi^L_{\lambda m,z\d}(m') = L(\lambda m,m')(z\d)^\ast = \bar{\lambda}\phi^L_{m,\d}(m')z^\ast
        =(\bar{\lambda}\phi^L_{m,\d}z^\ast)(m')
    \end{align*}
    giving 
    \begin{align*}
        \alpha_L(\lambda m,z\d) = \hhi(\phi^L_{\lambda m,z \d})
        =\hhi(\bar{\lambda}\phi^L_{m,\d}z^\ast) = \lambda z\alpha_L(m,\d)
    \end{align*}
    showing that $\alpha_L\in\Hom_{\complex,\ZhA}(M\times\g,M)$. Finally, one computes
    \begin{align*}
        h_{\alpha_L}(m,m')(\d)
        &= h\paraa{\alpha_L(m,\d),m'}
        =h\paraa{m',\alpha_L(m,\d)}^\ast
        =\hh\paraa{\alpha_L(m,\d)}(m')^\ast\\
        &= \phi^L_{m,\d}(m')^\ast = L(m,m')(\d)
    \end{align*}
    showing that $h_{\alpha_L}=L$.
\end{proof}

\noindent
Using Lemma~\ref{lemma_phi.alpha.L} one proves next that if $L$ fulfills a certain derivation property (cf. \eqref{eq:L.left.arg.deriv} below) and $L+L^\ast=dh$ then $\alpha_L$ defines a connection compatible with $h$.

\begin{proposition}\label{prop:L.hhi.connection}
    Let $(\A,\g)$ be a derivation based calculus and let $h$ be an invertible hermitian form on a left $\A$-module $M$. If $L\in\Hom_{\complex,\A}(M\times\Mh,\Omegabg{1})$ such that
    \begin{align}\label{eq:L.left.arg.deriv}
        L(am_1,m_2) = aL(m_1,m_2) + (da)h(m_1,m_2)
    \end{align}
    for $a\in\A$ and $m_1,m_2\in M$ then
    \begin{align*}
        \nabla_\d m = \hhi(\phi^L_{m,\d})
    \end{align*}
    is a connection on $M$. Furthermore, if $L+L^\ast=dh$ then $\nabla$ is compatible with $h$.
\end{proposition}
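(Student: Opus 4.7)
The plan is to prove the two claims (that $\nabla$ is a connection and that, under the extra hypothesis, it is compatible with $h$) by building directly on Lemma~\ref{lemma_phi.alpha.L}, which already tells us that $\alpha_L(m,\d)=\hhi(\phi^L_{m,\d})$ lies in $\Hom_{\complex,\ZhA}(M\times\g,M)$. In other words, additivity in both arguments and the $\ZA$-partial linearity conditions (1), (2), (4) of Definition~\ref{def_connection} are already free. Only the Leibniz rule (3) needs a separate argument, and it is exactly here that the derivation property \eqref{eq:L.left.arg.deriv} is used.

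First I would compute $\phi^L_{am,\d}$. By \eqref{eq:L.left.arg.deriv} one has
$L(am,m')(\d)=aL(m,m')(\d)+(\d a)h(m,m'),$
so applying $\ast$ and recalling that $h(m,m')^\ast=h(m',m)=\hh(m)(m')$ gives
\begin{align*}
    \phi^L_{am,\d}(m')=\phi^L_{m,\d}(m')a^\ast+\hh(m)(m')(\d a)^\ast,
\end{align*}
i.e.\ $\phi^L_{am,\d}=\phi^L_{m,\d}a^\ast+\hh(m)(\d a)^\ast$ in $M^\ast$. Applying $\hhi$ and using the anti-linearity $\hhi(\phi b)=b^\ast\hhi(\phi)$ from Proposition~\ref{prop:h.properties}, together with $\hhi\circ\hh=\id_M$, yields
\begin{align*}
    \nabla_\d(am)=\hhi(\phi^L_{am,\d})=a\hhi(\phi^L_{m,\d})+(\d a)m=a\nabla_\d m+(\d a)m,
\end{align*}
which is exactly (3). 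Combined with Lemma~\ref{lemma_phi.alpha.L}, this shows $\nabla\in\Cg(M)$.

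For the compatibility with $h$, the key observation is Lemma~\ref{lemma_phi.alpha.L} itself, which gives $L=h_{\alpha_L}=h_{\nabla}$. Unpacking $h_\nabla^\ast$ at a derivation $\d\in\g$ one finds
\begin{align*}
    h_\nabla(m_1,m_2)(\d)+h_\nabla^\ast(m_1,m_2)(\d)
    =h(\nabla_\d m_1,m_2)+h(m_1,\nabla_{\d^\ast}m_2),
\end{align*}
while the right-hand side of the assumed identity $L+L^\ast=dh$ evaluated at $\d$ is $\d h(m_1,m_2)$. Thus the assumption translates verbatim into the compatibility relation \eqref{eq:h.comp.again}, and so $\nabla\in\Cg(M,h)$.

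The only place where one has to be careful is in the Leibniz computation above, where the $\ast$ on the $\A$-valued side must be matched with the anti-linear behaviour of $\hhi$ on the $M^\ast$-side; once that sign/conjugation bookkeeping is done, everything else is a direct translation between $L$, $\phi^L$ and $h_\nabla$ using the two preceding results.
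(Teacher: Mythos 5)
Your proof is correct and follows essentially the same route as the paper: the Leibniz rule is obtained by applying \eqref{eq:L.left.arg.deriv} inside $\phi^L_{am,\d}$ and then using the conjugate-linearity of $\hhi$ from Proposition~\ref{prop:h.properties}, and compatibility follows from $L=h_{\nabla}$ via Lemma~\ref{lemma_phi.alpha.L}. Your observation that conditions (1), (2) and (4) of Definition~\ref{def_connection} are already supplied by Lemma~\ref{lemma_phi.alpha.L} is accurate and slightly streamlines the argument (the paper re-derives the $z\d$ case), but this is a presentational difference only.
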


\begin{proof}
    It is clear that $\nabla$ is a additive in both arguments. First, we note that if $L$ satisfies \eqref{eq:L.left.arg.deriv} then 
    \begin{align*}
        \phi^L_{am,\d}(m') 
        &= L(am,m')(\d)^\ast 
        = L(m,m')(\d)^\ast a^\ast + h(m',m)(\d a)^\ast\\
        &=(\phi^L_{am,\d}a^\ast)(m')+\paraa{\hh(m)(\d a)^\ast}(m')
    \end{align*}
    giving
    \begin{align*}
        \nabla_{\d}(am) 
        &= \hhi\paraa{\phi^L_{am,\d}}
        =\hhi\paraa{\phi^L_{m,\d}a^\ast + \hh(m)(\d a)^\ast}\\
        &= a\hhi\paraa{\phi^L_{m,\d}}+(\d a)m
        =a\nabla_{\d}m + (\d a)m.
    \end{align*}
    For $z\in\ZA$ and $\d\in\g$ such that $z\d\in\g$ one checks that
    \begin{align*}
        \phi^L_{m,z\d}(m') = L(m,m')(z\d)^\ast = z^\ast L(m,m')(\d)^\ast
        =L(m,m')(\d)^\ast z^\ast = \paraa{\phi^L_{m,\d}z^\ast}(m')
    \end{align*}
    implying that
    \begin{align*}
        \nabla_{z\d}m = \hhi\paraa{\phi^L_{m,z\d}}
        =\hhi\paraa{\phi^L_{m,\d}z^\ast} = z\hhi\paraa{\phi^L_{m,\d}} = z\nabla_{\d}m
    \end{align*}
    showing that $\nabla$ is indeed a connection on $M$.

    Next, assume that $L+L^\ast=dh$. It follows from Lemma~\ref{lemma_phi.alpha.L} that $L=h_{\nabla}$ (since $\alpha_L=\nabla)$, which gives $h_{\nabla}+h_{\nabla}^\ast=dh$, showing that $\nabla$ is compatible with $h$.
\end{proof}

\noindent
Let $\nabla^0$ be an arbitrary connection on $M$ and assume that $h$ is an invertible hermitian form on $M$. Setting
\begin{align*}
    L = \tfrac{1}{2}dh + \tfrac{1}{2}(h_{\nabla^0}-h_{\nabla^0}^\ast) + A,
\end{align*}
for arbitrary $A\in\Hom_{\A}(M\times\Mh,\Omegabg{1})$ such that $A^\ast=-A$ gives $L+L^\ast=dh$ and
\begin{align*}
    &L(am_1,m_2) = aL(m_1,m_2) + (da)h(m_1,m_2)\\
    &L(m_1,am_2) = L(m_1,m_2)a^\ast
\end{align*}
implying that $L\in\Hom_{\complex,\A}(M\times\Mh,\Omegabg{1})$. Using Proposition~\ref{prop:L.hhi.connection} one obtains the following result.

\begin{proposition}\label{prop:existence.metric.connection}
    Let $(\A,\g)$ be a derivation based calculus and let $M$ be an $\A$-module. Moreover, let $h$ be an invertible hermitian form on $M$ and let $\nabla^0\in\Cg(M)$. Then  \begin{align*}
        &\phi_{\nabla^0}:\{A\in\Hom_{\A}(M\times\Mh,\Omegaonebg):A^\ast=-A\}\to\Cg(M,h)\\
        &\phi_{\nabla^0}(A)(m,\d) = \hhi(\phi^{L_A}_{m,\d})
    \end{align*}
    for $m\in M$ and $\d\in\g$, where    
    \begin{align}\label{eq:def.L}
        L_A = \tfrac{1}{2}dh + \tfrac{1}{2}(h_{\nabla^0}-h_{\nabla^0}^\ast) + A,
    \end{align}
    is a bijection.
\end{proposition}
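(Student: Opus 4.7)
The plan is to view the proposition as a parametrization statement, where Proposition~\ref{prop:L.hhi.connection} produces the connection from $L$ and Lemma~\ref{lemma_phi.alpha.L} supplies the inverse operation $L = h_{\alpha_L}$. The paragraph preceding the statement already verifies that for any antisymmetric $A \in \Hom_{\A}(M\times\Mh,\Omegaonebg)$, the combination $L_A$ lies in $\Hom_{\complex,\A}(M\times\Mh,\Omegabg{1})$, satisfies the derivation property \eqref{eq:L.left.arg.deriv}, and obeys $L_A + L_A^\ast = dh$; thus $\phi_{\nabla^0}(A) = \alpha_{L_A}$ is a well-defined element of $\Cg(M,h)$. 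What remains is to establish injectivity and surjectivity.

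Injectivity follows almost immediately from Lemma~\ref{lemma_phi.alpha.L}: if $\phi_{\nabla^0}(A) = \phi_{\nabla^0}(A')$ as connections, then $h_{\alpha_{L_A}} = h_{\alpha_{L_{A'}}}$, and the identity $L = h_{\alpha_L}$ from that lemma gives $L_A = L_{A'}$. Since the terms other than $A$ in \eqref{eq:def.L} are identical, it follows that $A = A'$.

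For surjectivity, given any $\nabla \in \Cg(M,h)$, I would form the difference $\beta = \nabla - \nabla^0 \in \Hom_{\A,\ZhA}(M\times\g,M)$, which induces $h_\beta \in \Hom_{\A}(M\times\Mh,\Omegaonebg)$, and set
\begin{align*}
    A = \tfrac{1}{2}(h_\beta - h_\beta^\ast).
\end{align*}
The identity $(L^\ast)^\ast = L$ makes $A^\ast = -A$ transparent. A short calculation using the compatibility identity $dh = h_\nabla + h_\nabla^\ast$ to rewrite \eqref{eq:def.L} gives
\begin{align*}
    L_A = \tfrac{1}{2}(h_\nabla + h_\nabla^\ast) + \tfrac{1}{2}(h_\nabla - h_\nabla^\ast) = h_\nabla,
\end{align*}
and then Lemma~\ref{lemma_phi.alpha.L} yields $\alpha_{L_A} = \nabla$, i.e.\ $\phi_{\nabla^0}(A) = \nabla$.

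The main obstacle is only a bookkeeping one, namely verifying that $h_\beta^\ast$ inherits the full bimodule property from $h_\beta$ so that $A$ genuinely lands in the stated domain $\Hom_{\A}(M\times\Mh,\Omegaonebg)$. This requires the identity $h(m_1, am_2) = h(m_1, m_2) a^\ast$, which is obtained by combining left linearity with hermitian symmetry; once that is in hand, the rest of the argument is a direct manipulation with $h_{(\cdot)}$ and its involution.
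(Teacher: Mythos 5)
Your argument is correct and follows essentially the same route as the paper: well-definedness and injectivity rest on Proposition~\ref{prop:L.hhi.connection} and Lemma~\ref{lemma_phi.alpha.L} exactly as in the text. For surjectivity, your witness $A=\thalf(h_\beta-h_\beta^\ast)$ with $\beta=\nabla-\nabla^0$ is in fact the \emph{same} element the paper writes down (substitute $dh=h_\nabla+h_\nabla^\ast$ into its defining formula for $A$ to see this), but your packaging is cleaner: since $\beta$ is a difference of connections it lies in $\Hom_{\A,\ZhA}(M\times\g,M)$, whence $h_\beta\in\Hom_{\A}(M\times\Mh,\Omegaonebg)$ automatically, and $A^\ast=-A$ is immediate from $(L^\ast)^\ast=L$; this replaces the paper's two multi-line direct computations verifying the bimodule property and antihermiticity of $A$. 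The bookkeeping point you flag --- that $h_\beta^\ast$ inherits membership in $\Hom_{\A}(M\times\Mh,\Omegaonebg)$ from $h_\beta$ --- does hold, using $(a\omega b)^\ast=b^\ast\omega^\ast a^\ast$ together with the $\Mh$ module conventions, so there is no gap.
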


\begin{proof}
    Assume that $\nabla^0\in\Cg(M)$. Let us first show that for any $A\in\Hom_{\A}(M\times\Mh,\Omegaonebg)$ such that $A^\ast=-A$, $\phi_{\nabla^0}(A)$ is a connection compatible with $h$. To start with, we shall use Proposition~\ref{prop:L.hhi.connection} to show that $\phi_{\nabla^0}(A)$ is a connection, with $L_A$ defined as in \eqref{eq:def.L}. From
    \begin{align*}
        L_A(m_1,m_2)(\d) = \thalf\d h(m_1,m_2) 
        + \thalf h(\nabla^0_{\d}m_1,m_2)-\thalf h(m_1,\nabla^0_{\d^\ast}m_2)
        +A(m_1,m_2)(\d)
    \end{align*}
    it is clear that $L_A$ is biadditive and $L_A(\lambda m_1,m_2)(\d)=\lambda L_A(m_1,m_2)(\d)$ for $\lambda\in\complex$, as well as 
    \begin{align*}
        L_A(m_1,m_2)(z_1 \d_1+z_2 \d_2) = z_1 L_A(m_1,m_2)(\d_1)+z_2 L_A(m_1,m_2)(\d_2)   
    \end{align*}
    for $z_1,z_2\in\ZA$ such that $z_1\d_1,z_2\d_2\in\g$, showing that $L_A(m_1,m_2)\in\Omegaonebg$. Moreover,
    \begin{align*}
        L_A(m_1,am_2)(\d)
        &=\thalf h(m_1,m_2)\d a^\ast + \thalf \paraa{\d h(m_1,m_2)}a^\ast
        +\thalf h(\nabla^0_{\d}m_1.m_2)a^\ast\\
        &-\thalf h(m_1,\nabla^0_{\d^\ast}m_2)a^\ast-\thalf h(m_1,m_2)(\d^\ast a)^\ast
        +A(m_1,m_2)a^\ast\\
        &= L_A(m_1,m_2)(\d)a^\ast,
    \end{align*}
    showing that $L_A\in\Hom_{\complex,\A}(M\times\Mh,\Omegaonebg)$ and, furthermore, one finds that
    \begin{align*}
        L_A(am_1,m_2)(\d) 
        &= \thalf(\d a)h(m_1,m_2)+\thalf a \d h(m_1,m_2)
        +\thalf(\d a)h(m_1,m_2)\\
        &\qquad+\thalf ah(\nabla^0_{\d}m_1,m_2)-\thalf ah(m_1,\nabla^0_{\d^\ast}m_2)
        +aA(m_1,m_2)(\d)\\
        &=aL_A(m_1,m_2)(\d) + (\d a)h(m_1,m_2),
    \end{align*}
    and it follows from Proposition~\ref{prop:L.hhi.connection} that $\phi_{\nabla^0}(A)=\hhi(\phi^L_{m,\d})$ defines a connection on $M$. Using that $A^\ast=-A$ and $dh^\ast=dh$ one concludes that $L_A+L_A^\ast=dh$ which implies that $\nabla$ is compatible with $h$, by Proposition~\ref{prop:L.hhi.connection}. Furthermore, $\phi_{\nabla^0}$ is injective since
    \begin{align*}
        &\phi_{\nabla^0}(A) = \phi_{\nabla^0}(A')\implies
        \phi^{L_A}_{m,\d}(m') = \phi^{L_{A'}}_{m,\d}(m')\implies\\
        &L_A(m,m')(\d)=L_{A'}(m,m')(\d)\implies
        A(m,m')(\d) = A'(m,m')(\d)
    \end{align*}
    for all $\d\in\g$ and $m,m'\in M$, which is equivalent to $A=A'$.

    Now, let us show that $\phi_{\nabla^0}$ is surjective; to this end, let $\nabla\in\Cg(M,h)$. Define
    \begin{align*}
        A(m_1,m_2)(\d) 
        &= h(\nabla_{\d}m_1,m_2) - \thalf \d h(m_1,m_2)
        -\thalf h(\nabla^0_{\d}m_1,m_2)+\thalf h(m_1,\nabla^0_{\d^\ast}m_2),
    \end{align*}
    from which it is clear that $A(m_1,m_2)\in\Omegaonebg$. Moreover, one checks that
    \begin{align*}
        A(am_1,bm_2)(\d)
        &= (\d a)h(m_1,m_2)b^\ast+ah(\nabla_{\d}m_1,m_2)b^\ast
        -\thalf (\d a)h(m_1,m_2)b^\ast\\
        &\quad -\thalf a(\d h(m_1,m_2))b^\ast
        -\thalf ah(m_1,m_2)(\d b^\ast)
        -\thalf a h(\nabla^0_{\d}m_1,m_2)b^\ast \\
        &\quad- \thalf(\d a)h(m_1,m_2)b^\ast
        +\thalf ah(m_1,m_2)(\d^\ast b)^\ast + \thalf ah(m_1,\nabla^0_{\d}m_2)b^\ast\\
        &=(aA(m_1,m_2)b^\ast)(\d)
    \end{align*}
    which, together with biadditivity, shows that $A\in\Hom_{\A}(M\times\Mh,\Omegaonebg)$. Furthermore, one finds that
    \begin{align*}
        A^\ast&(m_1,m_2)(\d) 
        = A(m_2,m_1)^\ast(\d) = A(m_2,m_1)(\d^\ast)^\ast\\
        &= \paraa{h(\nabla_{\d^\ast}m_2,m_1) - \thalf \d^\ast h(m_2,m_1)
        -\thalf h(\nabla^0_{\d^\ast}m_2,m_1)+\thalf h(m_2,\nabla^0_{\d}m_1)}^\ast\\
        &=h(m_1,\nabla_{\d^\ast}m_2) - \thalf \d h(m_1,m_2)
        -\thalf h(m_1,\nabla^0_{\d^\ast}m_2)+\thalf h(\nabla^0_{\d}m_1,m_2),
    \end{align*}
    and using that $\nabla$ is compatible with $h$, implying that
    \begin{align*}
        h(m_1,\nabla_{\d^\ast}m_2)= \d h(m_1,m_2)-h(\nabla_{\d}m_1,m_2),
    \end{align*}
    gives
    \begin{align*}
        A^\ast(m_1,m_2)(\d)
        &= -h(\nabla_{\d}m_1,m_2) + \thalf \d h(m_1,m_2)
        -\thalf h(m_1,\nabla^0_{\d^\ast}m_2)+\thalf h(\nabla^0_{\d}m_1,m_2)\\
        &=-A(m_1,m_2)(\d).
    \end{align*}
    We conclude that there exists $A\in\Hom_{\A}(M\times\Mh,\Omegaonebg)$ such that $A^\ast=-A$ and $\phi_{\nabla^0}(A)=\nabla$. Hence, $\phi_{\nabla^0}$ is surjective.
\end{proof}

\noindent
As an immediate consequence, one obtains the following corollary.

\begin{corollary}\label{cor:existence.metric.connections}
    If $(\A,\g,h)$ is a hermitian calculus then there exists a connection on $\Omegaoneg$ compatible with $h$.
\end{corollary}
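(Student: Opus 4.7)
The plan is to derive this as a one-line application of Proposition~\ref{prop:existence.metric.connection}, with no genuine new work required. The key observation is that the definition of a (left) derivation based calculus $(\A,\g)$ already guarantees that $\Omegaoneg$ is a $\g$-connection module; in particular, $\Cg(\Omegaoneg)$ is nonempty. So one may pick an arbitrary $\nabla^0\in\Cg(\Omegaoneg)$ at the outset.

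Next, I would apply Proposition~\ref{prop:existence.metric.connection} with $M=\Omegaoneg$ and this chosen $\nabla^0$. The proposition produces a bijection
\[
    \phi_{\nabla^0}:\{A\in\Hom_{\A}(\Omegaoneg\times\widehat{\Omegaoneg},\Omegaonebg):A^\ast=-A\}\to\Cg(\Omegaoneg,h),
\]
so to conclude existence it suffices to exhibit one element of the domain. The trivial choice $A=0$ plainly satisfies $A^\ast=-A$, so $\phi_{\nabla^0}(0)$ is a connection on $\Omegaoneg$ compatible with $h$, which is the claim. Equivalently, one may observe that $\Cg(\Omegaoneg,h)$ is in bijection with the stated space of skew $A$'s, and the latter is nonempty (containing $0$), hence $\Cg(\Omegaoneg,h)\neq\emptyset$.

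There is no real obstacle here: the work was entirely carried out in Proposition~\ref{prop:existence.metric.connection}, and the corollary simply records the fact that the domain of the bijection $\phi_{\nabla^0}$ is nonempty and that by definition of a derivation based calculus a starting connection $\nabla^0$ is available. The only thing to verify in writing is that the hypotheses of Proposition~\ref{prop:existence.metric.connection} are met, namely that $h$ is an invertible hermitian form on $\Omegaoneg$ (which is built into the definition of a hermitian calculus) and that $\Cg(\Omegaoneg)\neq\emptyset$ (which is built into the definition of a derivation based calculus).
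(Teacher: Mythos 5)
Your proposal is correct and coincides with the paper's own argument: the paper likewise takes an arbitrary $\nabla^0\in\Cg(\Omegaoneg)$, guaranteed by the definition of a derivation based calculus, and applies Proposition~\ref{prop:existence.metric.connection} with $A=0$ to obtain the compatible connection $\phi_{\nabla^0}(0)$. No gaps.
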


\begin{proof}
    If $(\A,\g,h)$ is a hermitian calculus then there exists a connection $\nabla^0$ on $\Omegaoneg$. Since the metric is invertible, Proposition~\ref{prop:existence.metric.connection} implies that $\phi_{\nabla^0}(0)$ is a connection on $\Omegaoneg$ compatiable with $h$.
\end{proof}

\noindent To obtain a better understanding of the compatible connection defined in Proposition~\ref{prop:existence.metric.connection}, let us work out the case when $M$ is a projective module with generators $\{\theta^i\}_{i=1}^N$, and dual basis $\{\phi_i\}_{i=1}^N$, where $\nabla^0$ is the so called Grassmann connection. 

As noted in Section~\ref{sec:ind.cal.proj.mods}, given an invertible hermitian form $h$ one can realize the projective module $M$ as the projection of the free module $\A^N$ with basis $\{e^i\}_{i=1}^N$ with respect to
\begin{align*}
    p(m_ie^i) = m_ih^{ij}h_{jk}e^k,
\end{align*}
giving $p(\A^N)\simeq M$ and the isomorphism is given by $\psi=\pi|_{p(\A^N)}$. As generators of $p(\A^N)$ we introduce $\thetat^i= h^{ij}h_{jk}e^k$ and note that $\psi(\thetat^i)=\theta^i$.

If one defines a connection $\nablat^0$ on $\A^N$ as 
\begin{align*}
    \nablat^0_{\d}(m_ie^i) = (\d m_i)e^i
\end{align*}
then $p\circ\nablat^0$ is a connection on $p(\A^N)$ and, consequently,
\begin{align*}
    \nabla^0 = \psi\circ p \circ \nablat^0\circ\psi^{-1}
\end{align*}
is a connection on $M$. One finds that
\begin{equation}\label{eq:grassm.connection}
    \begin{split}
        \nabla^0_\d\theta^i &= (\psi\circ p)\paraa{\nablat^0_\d\thetat^i}
        = (\psi\circ p)\paraa{(\d h^{ij}h_{jk})e^k}
        = (\d h^{ij}h_{jk})\psi(\thetat^k)\\
        &=(\d h^{ij}h_{jk})\theta^k.
    \end{split}
\end{equation}
Now, set 
\begin{align*}
    L = \thalf dh + \thalf h_{\nabla^0} - \thalf h_{\nabla^0}^\ast
\end{align*}
and $\phi^L_{m,\d}(m') = L(m,m')(\d)^\ast$, 
giving the connection in Proposition~\ref{prop:existence.metric.connection} as
\begin{align*}
    \nabla_{\d} m = \hhi(\phi^L_{m,\d}) 
    = \phi^L_{m,\d}(\theta^i)^\ast\theta_i 
    = L(m,\theta^i)(\d)\theta_i.
\end{align*}
For the Grassmann connection $\nabla^0$ given in \eqref{eq:grassm.connection} one obtains
\begin{align*}
    \nabla_{\d}\theta^i 
    &= \thalf(\d h^{ij})\theta_j + \thalf h(\nabla^0_{\d}\theta^i,\theta^j)\theta_j
    -\thalf h(\theta^i,\nabla^0_{\d^\ast}\theta^j)\theta_j\\
    &= \thalf(\d h^{ij})\theta_j + \thalf (\d h^{ik}h_{kl})h^{lj}\theta_j
    -\thalf h^{il}(\d h_{lk}h^{kj})\theta_j\\
    &=\thalf(\d h^{ij})\theta_j + \thalf (\d h^{ik}h_{kl})h^{lj}\theta_j
    -\thalf(\d h^{il}h_{lk}h^{kj})\theta_j+\thalf (\d h^{il})\theta_l
\end{align*}
giving
\begin{align}
    \nabla_{\d}\theta^i = 
    \thalf(\d h^{ij})\theta_j + \thalf (\d h^{ij}h_{jk})\theta^k,
\end{align}
as a connection on $M$ compatible with $h$.

\section{Torsion free connections on $\Omegaoneg$}\label{sec:torsionfree.conn}

\noindent 
After having considered the existence of connections compatible with a hermitian form in the previous section, let us turn our attention to torsion free connections. 
In differential geometry, the torsion $T$ of a connection $\nabla$ on the tangent bundle is defined as
\begin{align*}
    T(\nabla)(X,Y) = \nabla_X Y-\nabla_Y X-[X,Y]
\end{align*}
for vector fields $X,Y$ on the manifold. In terms of the corresponding connection on differential forms, one obtains
\begin{align*}
    (T(\nabla)\omega)(X,Y) = (\nabla_X\omega)(Y)-(\nabla_Y\omega)(X)-d\omega(X,Y),
\end{align*}
and a connection is called torsion free if the above expressions vanish.

In general, there is no canonical way of defining torsion for connections on arbitrary modules (however, see \cite{aw:curvature.three.sphere,aw:cgb.sphere,atn:minimal.embeddings.morphisms} for a formulation of torsion in terms of anchor maps), but for $\Omegaoneg$ there is a straightforward generalization of the classical concept. 

\begin{definition}
    Let $(\A,\g)$ be a derivation based calculus and let $\nabla\in\Cg(\Omegaoneg)$. The torsion $T(\nabla):\Omegaoneg\to\Omegabg{2}$ is defined as
    \begin{align}\label{eq:def.torsion}
        \paraa{T(\nabla)\omega}(\d_1,\d_2) =
        (\nabla_{\d_1}\omega)(\d_2)-(\nabla_{\d_2}\omega)(\d_1)-d\omega(\d_1,\d_2) 
    \end{align}
    for $\omega\in\Omegaoneg$ and $\d_1,\d_2\in\g$. Moreover, a connection is called torsion free if $T(\nabla)=0$. The set of torsion free connections on $\Omegaoneg$ will be denoted by $\CgT(\Omegaoneg)$.
\end{definition}

\noindent
Given a connection $\nabla$ on $\Omegaoneg$, one can define 
\begin{align}\label{eq:associated.connection}
    (\nablat_{\d}\omega)(\d') = (\nabla_{\d'} \omega)(\d) + d\omega(\d,\d').
\end{align}
and it is easy to check that $\nablat$ satisfies the properties required for a connection.
However, even though $\omega\in\Omegaoneg$, it is not guaranteed that $\nablat_{\d}\omega$ is an element of $\Omegaoneg$ (but clearly an element of $\Omegaonebg$). Moreover, since $d\omega\in\Omegag{2}$ for $\omega\in\Omegaoneg$, it follows that $d\omega(\d,\cdot)$ is an element of $\Omegaoneg$, so the potential problem lies with 
$(\nabla_{\d'}\omega)(\d)$ for fixed $\d\in\g$ and $\omega\in\Omegaoneg$.

If $\nabla$ is torsion free, then $\nabla=\nablat$ (which is simply a rearrangement of \eqref{eq:def.torsion} when $T(\nabla)=0)$ implying that $\nablat_{\d}\omega=\nabla_{\d}\omega\in\Omegaoneg$. Hence, the requirement that $\nablat$ is a connection on $\Omegaoneg$ is a necessary condition for torsion freeness. To address this issue we introduce the following definition.

\begin{definition}\label{def:regular.map}
    Let $\alpha\in\Hom_{\complex,\ZhA}(\Omegaoneg\times\g,\Omegaoneg)$ and define 
    \begin{align*} 
        &\sigma(\alpha)\in\Hom_{\complex,\ZhA}(\Omegaoneg\times\g,\Omegaonebg)\\
        &\sigma(\alpha)(\omega,\d)(\d') = \alpha(\omega,\d')(\d).
    \end{align*}
    If $\sigma(\alpha)\in\Hom_{\complex,\ZhA}(\Omegaoneg\times\g,\Omegaoneg)$ then $\alpha$ is called regular. The set of regular maps in $\Hom_{\complex,\ZhA}(\Omegaoneg\times\g,\Omegaoneg)$ will be denoted by $\HomReg_{\complex,\ZhA}(\Omegaoneg\times\g,\Omegaoneg)$.
\end{definition}

\noindent
The set of regular connections on $\Omegaoneg$ will be denoted by $\CgReg(\Omegaoneg)$. If $\nabla\in\CgReg(\Omegaoneg)$ then it is clear that $\nablat$, defined by \eqref{eq:associated.connection}, is also a connection on $\Omegaoneg$. 

In view of the discussion above, a necessary condition for the existence of torsion free connections is the existence of regular connections, i.e. $\CgT(\Omegaoneg)\subseteq\CgReg(\Omegaoneg)$. Let us illustrate the concept of regular connections in the case of free modules.
 
\begin{example}
    Assume that $\Omegaoneg$ is a free module with basis $\{\theta^i\}_{i=1}^N$. A connection on $\Omegaoneg$ is given by choosing $\{\Gamma^i_j\}_{i,j=1}^N\subseteq\Omegaonebg$ and setting $\nabla_{\d}\theta^i=\Gamma^i_j(\d)\theta^j$ giving
    \begin{align*}
        \nabla_{\d}(\omega_i\theta^i)
        =\omega_i\Gamma^i_j(\d)\theta^j + (\d\omega_i)\theta^i
    \end{align*}
    which is clearly an element of $\Omegaoneg$. Furthermore
    \begin{align*}
        \sigma(\nabla)(\omega,\d)(\d') = \Gamma^i_j(\d')\theta^j(\d), 
    \end{align*}
    and we conclude that $\nabla$ is a regular connection if and only if $\Gamma^i_j\in\Omegaoneg$, giving a concrete characterization of regular connections in this case.
\end{example}

\begin{remark}
    Note that, in differential geometry, given a connection $\nabla$ on the tangent bundle, the associated connection in \eqref{eq:associated.connection} corresponds to
    \begin{align*}
        \nablat_{X} Y = \nabla_Y X + [X,Y],
    \end{align*}
    for vector fields $X,Y$.
\end{remark}
 
\noindent
Let us introduce some notation in order to rewrite \eqref{eq:def.torsion} in a more convenient form. To this end, define
\begin{align*}
    \wedge,s:\Hom_{\complex,\ZhA}(\Omegaoneg\times\g,\Omegabg{1})
    \to\Hom_{\complex,\ZhA}(\Omegaoneg\times\g,\Omegabg{1})
\end{align*}
as
\begin{align*}
    &\wedge\alpha = \alpha-\sigma(\alpha)\qand s(\alpha) = \alpha + \sigma(\alpha)
\end{align*}
that is,
\begin{align*}
    &(\wedge\alpha)(\omega,\d)(\d') = \alpha(\omega,\d){\d'} - \alpha(\omega,\d')(\d)\\
    &s(\alpha)(\omega,\d)(\d') = \alpha(\omega,\d){\d'} + \alpha(\omega,\d')(\d)
\end{align*}
from which it follows that $s\circ\wedge=\wedge\circ s = 0$. Note that if $\alpha\in\HomReg_{\complex,\ZhA}(\Omegaoneg\times\g,\Omegaoneg)$ then $\wedge \alpha,s(\alpha)\in\Hom_{\complex,\ZhA}(\Omegaoneg\times\g,\Omegaoneg)$. 
In this context, we allow for a slight abuse of notation and consider the exterior derivative $d\in\HomReg_{\complex,\ZhA}(\Omegaoneg\times\g,\Omegaoneg)$ via
\begin{align*}
    d(\omega,\d_1)(\d_2) = d\omega(\d_1,\d_2)
\end{align*}
satisfying $\wedge d = 2d$ and $s(d)=0$. 
Hence, one can write \eqref{eq:def.torsion} as
\begin{align}
    T(\nabla) = \wedge\nabla - d. 
\end{align}
As we have seen, the existence of regular connections is necessary for the existence of torsion free connections, and the next result gives an explicit characterization of all torsion free connections on $\Omegaoneg$.

\begin{proposition}\label{prop:torsionfree.connections}
    Let $(\A,\g)$ be a derivation based calculus and let $\nabla^0$ be a regular left $\g$-connection on $\Omegaoneg$. Then
    \begin{align*}
        &\psi_{\nabla^0}:\{\beta\in\HomReg_{\A,\ZhA}(\Omegaoneg\times\g,\Omegaoneg):\wedge\beta=0\}\to\CgT(\Omegaoneg)\\  &\psi_{\nabla^0}(\beta) = \tfrac{1}{2}\paraa{d + s(\nabla^0)} + \beta
    \end{align*}
    is a bijection.
\end{proposition}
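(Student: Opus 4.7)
The strategy is to rewrite $\psi_{\nabla^0}(\beta)$ in a form where its status as a regular connection is transparent, then verify torsion-freeness and bijectivity by formal manipulations with the operators $\wedge$, $s$, and $\sigma$. From $s(\nabla^0)=\nabla^0+\sigma(\nabla^0)$ and $\wedge\nabla^0=\nabla^0-\sigma(\nabla^0)$ one obtains $s(\nabla^0)=2\nabla^0-\wedge\nabla^0$, and hence
\begin{align*}
    \tfrac{1}{2}\paraa{d+s(\nabla^0)}=\nabla^0-\tfrac{1}{2}\paraa{\wedge\nabla^0-d}=\nabla^0-\tfrac{1}{2}T(\nabla^0),
\end{align*}
so that $\psi_{\nabla^0}(\beta)=\nabla^0-\tfrac{1}{2}T(\nabla^0)+\beta$. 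A direct calculation combining the Leibniz rule for $\nabla^0$ with $d(a\omega)(\d_1,\d_2)=(\d_1 a)\omega(\d_2)-(\d_2 a)\omega(\d_1)+ad\omega(\d_1,\d_2)$ shows that $T(\nabla^0)$ is $\A$-linear in its first argument, and its image lies in $\Omegaoneg$ since $\nabla^0$ is regular and $d\omega\in\Omegag{2}$; hence $T(\nabla^0)\in\HomReg_{\A,\ZhA}(\Omegaoneg\times\g,\Omegaoneg)$. Consequently $-\tfrac{1}{2}T(\nabla^0)+\beta$ is a left $\A$-linear regular perturbation of $\nabla^0$, and $\psi_{\nabla^0}(\beta)$ is a regular $\g$-connection on $\Omegaoneg$.

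Torsion-freeness is then immediate from $\wedge d=2d$, $\wedge\circ s=0$, and $\wedge\beta=0$, which give
\begin{align*}
    T\paraa{\psi_{\nabla^0}(\beta)}=\wedge\psi_{\nabla^0}(\beta)-d=\tfrac{1}{2}\wedge d+\tfrac{1}{2}\wedge s(\nabla^0)+\wedge\beta-d=0.
\end{align*}
Injectivity is trivial because $\psi_{\nabla^0}(\beta)-\psi_{\nabla^0}(\beta')=\beta-\beta'$. For surjectivity, given $\nabla\in\CgT(\Omegaoneg)$, I would set $\beta=\nabla-\tfrac{1}{2}\paraa{d+s(\nabla^0)}=(\nabla-\nabla^0)+\tfrac{1}{2}T(\nabla^0)$. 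The first summand is $\A$-linear in $\omega$ as a difference of two connections, and the second is $\A$-linear by the remarks above, so $\beta\in\Hom_{\A,\ZhA}(\Omegaoneg\times\g,\Omegaoneg)$; regularity of $\beta$ follows from the inclusion $\CgT(\Omegaoneg)\subseteq\CgReg(\Omegaoneg)$ together with regularity of $\nabla^0$; and finally $\wedge\beta=\wedge\nabla-d=T(\nabla)=0$, so $\beta$ lies in the claimed domain, while $\psi_{\nabla^0}(\beta)=\nabla$ by construction.

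The only delicate point in this argument is the $\A$-linearity of $T(\nabla^0)$ in its first slot, which is the mechanism by which the Leibniz terms coming from $d$ and from $\nabla^0$ cancel to ensure that $\psi_{\nabla^0}(\beta)$ genuinely satisfies the connection Leibniz rule despite $d$ itself not being a connection; every remaining step reduces to a formal manipulation with $\wedge$, $s$, and $\sigma$.
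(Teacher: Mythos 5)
Your proof is correct, and it organizes the key step differently from the paper. Where the paper verifies directly that $\psi_{\nabla^0}(\beta)$ satisfies the Leibniz rule by expanding $(\nabla_{\d}a\omega)(\d')$ term by term and watching the derivative contributions from $d$ and from $s(\nabla^0)$ combine, you instead use the identity $s(\nabla^0)=2\nabla^0-\wedge\nabla^0$ to rewrite
\begin{align*}
    \tfrac{1}{2}\paraa{d+s(\nabla^0)}=\nabla^0-\tfrac{1}{2}T(\nabla^0),
\end{align*}
and then invoke the tensoriality of the torsion, i.e.\ that $T(\nabla^0)$ is left $\A$-linear in $\omega$ and, by regularity of $\nabla^0$ together with $d\omega\in\Omega^2_{\g}$, lands in $\HomReg_{\A,\ZhA}(\Omegaoneg\times\g,\Omegaoneg)$. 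This reduces the connection property to the paper's own observation that a connection plus an element of $\Hom_{\A,\ZhA}(M\times\g,M)$ is again a connection, and it makes the conceptual mechanism (torsion is a tensor, so subtracting half of it preserves the Leibniz rule) explicit rather than leaving it buried in a cancellation. The same decomposition also streamlines surjectivity, since $\beta=(\nabla-\nabla^0)+\tfrac{1}{2}T(\nabla^0)$ is manifestly $\A$-linear, whereas the paper re-expands $\beta(a\omega,\d)(\d')$ from scratch. The remaining steps --- torsion-freeness from $\wedge d=2d$ and $\wedge\circ s=0$, injectivity, and the formula for $\beta$ in the surjectivity argument --- coincide with the paper's. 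The one point you should make sure is fully spelled out is the $\A$-linearity computation for $T(\nabla^0)(a\omega)$, which you assert but do not display; it is a two-line calculation and everything else hinges on it.
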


\begin{proof}
    Let $\nabla^0\in\CgReg(\Omegaoneg)$ and assume that $\beta\in\HomReg_{\A,\ZhA} (\Omegaoneg\times\g,\Omegaoneg)$ such that $\wedge\beta=0$. Defining
    \begin{align*}
        \nabla = \psi_{\nabla^0}(\beta) = \tfrac{1}{2}\paraa{d + s(\nabla^0)} + \beta
    \end{align*}
    it follows that $\nabla\in\Hom_{\complex,\ZhA}(\Omegaoneg\times\g,\Omegaoneg)$ since $\nabla^0$ is assumed to be regular. Furthermore, one checks that
    \begin{align*}
        (\nabla_{\d}a\omega)(\d') 
        &= \thalf d(a\omega)(\d,\d')
        +\thalf (\nabla^0_{\d} a\omega)(\d')
        +\thalf (\nabla^0_{\d'} a\omega)(\d)+\beta(a\omega,\d)(\d')\\
        &= a(\nabla_{\d}\omega)(\d')
        +\thalf (da\cdot\omega)(\d,\d')
        +\thalf (\d a)\omega(\d')
        +\thalf (\d'a)\omega(\d)\\
        &=a(\nabla_{\d}\omega)(\d') + (\d a)\omega(\d'),
    \end{align*}
    as well as $\nabla_{z\d}\omega=z\nabla_{\d}\omega$ for $\omega\in\Omegaoneg$, $z\in\ZA$ and $\d\in\g$ such that $z\d\in\g$. Together with the obvious linearity properties this shows that $\nabla\in\Cg(\Omegaoneg)$. Using that $\wedge d = 2d$ and $\wedge\circ s=0$, together with the assumption $\wedge\beta=0$, one finds that
    \begin{align*}
        \wedge\nabla = \thalf\wedge d = d
    \end{align*}
    showing that $\nabla=\psi_{\nabla^0}(\beta)\in\CgT(\Omegaoneg)$. The injectivity of $\psi_{\nabla^0}$ is clear since
    \begin{align*}
        \psi_{\nabla^0}(\beta) = \psi_{\nabla^0}(\beta')\implies
        \thalf(d+s(\nabla^0)) + \beta = \thalf(d+s(\nabla^0))+\beta'\implies
        \beta=\beta'.
    \end{align*}
    Let us now show that $\psi_{\nabla^0}$ is surjective. To this end, assume that $\nabla\in\CgT(\Omegaoneg)$ and $\nabla^0\in\CgReg(\Omegaoneg)$. Defining
    \begin{align*}
        \beta = \nabla-\thalf\paraa{d+s(\nabla^0)}
    \end{align*}
    implies that $\nabla=\psi_{\nabla^0}(\beta)$ and $\beta\in\HomReg_{\complex,\ZhA}(\Omegaoneg\times\g,\Omegaoneg)$ since $\nabla,\nabla^0$ are regular. Moreover,
    \begin{align*}
        \wedge\beta = \wedge\nabla - d = 0,
    \end{align*}
    since $\nabla$ is torsion free, and
    \begin{align*}
        \beta(a\omega,\d)(\d')
        &= (\nabla_{\d}a\omega)(\d')
        -\tfrac{1}{2}d(a\omega)(\d,\d') 
        - \tfrac{1}{2}(\nabla^0_{\d}a\omega)(\d')
        - \tfrac{1}{2}(\nabla^0_{\d'}a\omega)(\d)\\
        &=a(\nabla_{\d}\omega)(\d')+(\d a)\omega(\d')
        -\tfrac{1}{2}(\d a)\omega(\d')+\tfrac{1}{2}(\d'a)\omega(\d)
        -\tfrac{1}{2}ad\omega(\d,\d')\\
        &\qquad-\tfrac{1}{2}a(\nabla^0_{\d}\omega)(\d')-\tfrac{1}{2}(\d a)\omega(\d')-\tfrac{1}{2}a(\nabla^0_{\d'}\omega)(\d)-\tfrac{1}{2}(\d' a)\omega(\d)\\
        &=a\beta(\omega,\d)(\d')
    \end{align*}
    shows that $\beta\in\HomReg_{\A,\ZhA}(\Omegaoneg\times\g,\Omegaoneg)$. 
    Hence, $\psi_{\nabla^0}$ is surjective. 
\end{proof}

\noindent
In particular, one obtains the following result.
\begin{corollary}\label{cor:existence.torsion.free.connection}
    Let $(\A,\g)$ be a derivation based calculus. There exists a torsion free connection on $\Omegaoneg$ if and only if $\CgReg(\Omegaoneg)\neq\emptyset$.
\end{corollary}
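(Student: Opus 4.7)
My plan is to prove the two implications separately, with the backward direction being essentially a direct application of Proposition~\ref{prop:torsionfree.connections} and the forward direction following from the discussion of the associated connection preceding Definition~\ref{def:regular.map}.

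For the forward direction, suppose $\nabla\in\CgT(\Omegaoneg)$. I would invoke the identity $\nabla=\nablat$ valid for any torsion free connection (which is just a rearrangement of $T(\nabla)=0$ in the form \eqref{eq:associated.connection}). Writing this out, $(\nabla_{\d}\omega)(\d')=(\nabla_{\d'}\omega)(\d)+d\omega(\d,\d')$ for $\omega\in\Omegaoneg$ and $\d,\d'\in\g$, which is exactly the statement $\sigma(\nabla)(\omega,\d)(\d')=\nabla(\omega,\d)(\d')-d\omega(\d,\d')$. Since $\nabla(\omega,\d)$ and $d\omega(\d,\cdot)$ both lie in $\Omegaoneg$ (the latter because $d\omega\in\Omegag{2}\subseteq\Omegabg{2}$), so does $\sigma(\nabla)(\omega,\d)$, verifying regularity in the sense of Definition~\ref{def:regular.map}. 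Thus $\nabla\in\CgReg(\Omegaoneg)$ and in particular $\CgReg(\Omegaoneg)\neq\emptyset$.

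For the backward direction, assume $\CgReg(\Omegaoneg)\neq\emptyset$ and pick any $\nabla^0\in\CgReg(\Omegaoneg)$. I would then simply apply Proposition~\ref{prop:torsionfree.connections} with the choice $\beta=0$, which trivially belongs to $\HomReg_{\A,\ZhA}(\Omegaoneg\times\g,\Omegaoneg)$ and satisfies $\wedge\beta=0$. The proposition then guarantees that $\psi_{\nabla^0}(0)=\tfrac{1}{2}\paraa{d+s(\nabla^0)}$ is a torsion free connection on $\Omegaoneg$, so $\CgT(\Omegaoneg)\neq\emptyset$.

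There is no real obstacle here; the content of the corollary is entirely carried by Proposition~\ref{prop:torsionfree.connections} together with the observation that torsion freeness forces regularity. The only subtlety worth noting is that the forward direction uses the fact that $d\omega\in\Omegag{2}$ lies inside $\Omegabg{2}$ and that $d\omega(\d,\cdot)\in\Omegaoneg$ (as remarked in the paragraph following \eqref{eq:associated.connection}), which is what guarantees that $\nablat$ being a connection on $\Omegaoneg$ is equivalent to $\nabla$ being regular.
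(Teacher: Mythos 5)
Your proof is correct and follows essentially the same route as the paper: the backward direction is Proposition~\ref{prop:torsionfree.connections} with $\beta=0$, and the forward direction is the inclusion $\CgT(\Omegaoneg)\subseteq\CgReg(\Omegaoneg)$, which the paper establishes in the discussion preceding Definition~\ref{def:regular.map} and which you correctly re-derive from $\nabla=\nablat$ together with $d\omega(\d,\cdot)\in\Omegaoneg$.
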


\begin{proof}
    First, assume that $\CgReg(\Omegaoneg)\neq\emptyset$ and let $\nabla^0\in\CgReg(\Omegaoneg)$. Clearly, $\beta=0$ satisfies the requirements that $\beta\in\Hom_{\A,\ZhA}(\Omegaoneg\times\g,\Omegaoneg)$ and $\wedge\beta=0$. It follows from Proposition~\ref{prop:torsionfree.connections} that $\psi_{\nabla^0}(0)$ is a torsion free connection on $\Omegaoneg$.
    Conversely, let $\nabla\in\CgT(\Omegaoneg)$. Since $\nabla\in\CgT(\Omegaoneg)\subseteq\CgReg(\Omegaoneg)$ it follows that $\CgReg(\Omegaoneg)\neq\emptyset$.
\end{proof}

\section{Levi-Civita connections}\label{sec:lc.connections}

\noindent
In the previous sections we have separately investigated the existence of torsion free connections and connections compatible with a hermitian form. We recall from Corollary~\ref{cor:existence.metric.connections} and Corollary~\ref{cor:existence.torsion.free.connection} that torsion free connections, as well as connections compatible with the hermitian form, exist on hermitian calculi (assuming $\CgReg(\Omegaoneg)\neq\emptyset$). In this   section, we combine these results and explore the existence of Levi-Civita connections on $\Omegaoneg$. 

\begin{definition}
    Let $(\A,\g)$ be a derivation based calculus and let $h$ be a hermitian form on $\Omegaoneg$. A connection $\nabla\in\Cg(\Omegaoneg)$ is called a Levi-Civita connection with respect to $h$ if it is torsion free and compatible with $h$. The set of Levi-Civita connections with respect to $h$ will be denoted by $\CgLC(\Omegaoneg,h)$.
\end{definition}

\noindent
Comparing with Proposition~\ref{prop:existence.metric.connection} and Proposition~\ref{prop:torsionfree.connections}, we would like to describe the set of Levi-Civita connections as a bijective correspondence with a set of maps that depend on the hermitian form. 

To this end, for $\alpha\in\Hom_{\complex,\ZhA}(\Omegaoneg\times\g,\Omegaoneg)$ define $T_h(\alpha)\in\Hom_{\complex}(\Omegaoneg\times\Omegaonegh,\Omegaoneg)$ by
\begin{align*}
    T_h(\alpha) = h_\alpha + h_\alpha^\ast,
\end{align*}
which enables one to write the compatibility condition with $h$ as $dh=T_h(\nabla)$.

The next result characterizes the set of Levi-Civita connections in terms of the inverse image of the element $2dh-T_h(d)\in\Hom_{\complex}(\Omegaoneg\times\Omegaonegh,\Omegaonebg)$. In Riemannian geometry, this set is not of particular interest since it contains only one element, due to the fact that the Levi-Civita connection is unique. However, as we shall see explicitly in Section~\ref{sec:lc.free.modules}, this need not be the case in the context of derivation based calculi. In any case, one has the following result, where one should note that the hermitian form is not assumed to be invertible.

\begin{theorem}\label{thm:LC.inverse.image}
    Let $(\A,\g)$ be a derivation based calculus and let $h$ be a hermitian form on $\Omegaoneg$. Then
    \begin{align*}
        &\Psi:\paraa{(T_h\circ s)|_{\CgReg(\Omegaoneg)}}^{-1}\paraa{2dh-T_h(d)}\to \CgLC(\Omegaoneg,h)\\
        &\Psi(\nabla) = \tfrac{1}{2}\paraa{d+s(\nabla)}
    \end{align*}
    is bijective.
\end{theorem}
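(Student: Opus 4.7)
The strategy is to verify that $\Psi$ lands in $\CgLC(\Omegaoneg,h)$, show that every Levi-Civita connection is a fixed point of $\Psi$ (which gives surjectivity and identifies the candidate inverse), and then address injectivity. I rely throughout on the identities $\wedge d = 2d$, $s(d) = 0$, $\wedge\circ s = 0$, and $\sigma^2 = \id$ established just before the statement, together with the linearity of $T_h$.

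For well-definedness, I first check that $\Psi(\nabla) = \tfrac12\paraa{d + s(\nabla)}$ is a $\g$-connection on $\Omegaoneg$: regularity of $\nabla$ places $s(\nabla)$ in $\Omegaoneg$, and the Leibniz rule follows from combining the Cartan formula $d(a\omega)(\d,\d') = a\,d\omega(\d,\d') + (\d a)\omega(\d') - (\d'a)\omega(\d)$ with the expansion of $s(\nabla)(a\omega,\d)(\d')$, where the asymmetric $(\d'a)\omega(\d)$ contributions cancel after multiplication by $\tfrac12$ to leave $(\d a)\omega(\d')$. Torsion-freeness is then immediate from $\wedge\Psi(\nabla) = \tfrac12\wedge d + \tfrac12\wedge s(\nabla) = d$, while $h$-compatibility reads
\[
T_h(\Psi(\nabla)) = \tfrac12 T_h(d) + \tfrac12 T_h(s(\nabla)) = \tfrac12 T_h(d) + \tfrac12\paraa{2dh - T_h(d)} = dh
\]
directly from the preimage condition, i.e.\ $\Psi(\nabla) \in \CgLC(\Omegaoneg,h)$.

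For surjectivity, any $\nabla_{\mathrm{LC}} \in \CgLC(\Omegaoneg,h)$ satisfies $\sigma(\nabla_{\mathrm{LC}}) = \nabla_{\mathrm{LC}} - d$ by torsion-freeness, hence $s(\nabla_{\mathrm{LC}}) = 2\nabla_{\mathrm{LC}} - d$, and $h$-compatibility then gives $T_h(s(\nabla_{\mathrm{LC}})) = 2T_h(\nabla_{\mathrm{LC}}) - T_h(d) = 2dh - T_h(d)$, placing $\nabla_{\mathrm{LC}}$ in the preimage. Substituting yields $\Psi(\nabla_{\mathrm{LC}}) = \tfrac12(d + 2\nabla_{\mathrm{LC}} - d) = \nabla_{\mathrm{LC}}$, so $\CgLC$ embeds in the preimage as the fixed-point set of $\Psi$ and the map is surjective.

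The main obstacle is injectivity. If $\Psi(\nabla_1) = \Psi(\nabla_2)$ then $\delta := \nabla_1 - \nabla_2$ is regular, $\A$-linear in its first slot, and satisfies $\sigma(\delta) = -\delta$, and one must deduce $\delta = 0$. My plan is to reduce this to showing that every $\nabla$ in the preimage is itself a fixed point of $\Psi$; since $\nabla - \Psi(\nabla) = \tfrac12 T(\nabla)$, this amounts to establishing $T(\nabla) = 0$. Writing $\sigma(\nabla) = \nabla - d - T(\nabla)$ and substituting into $T_h(s(\nabla)) = 2dh - T_h(d)$ produces the identity
\[
T_h(T(\nabla)) = 2\paraa{T_h(\nabla) - dh}
\]
relating the torsion of $\nabla$ to its compatibility defect, and the delicate step is to exploit the regularity of $\nabla$ together with the algebraic structure of $T_h$ and $h$ to force both sides to vanish. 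Once $T(\nabla) = 0$ is secured, $\nabla$ coincides with $\Psi(\nabla) \in \CgLC$, and injectivity follows at once from the fixed-point identity of the surjectivity step.
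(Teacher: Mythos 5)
Your first two steps are correct and coincide with the paper's proof: the verification that $\Psi(\nabla)$ is a torsion-free connection compatible with $h$, and the observation that every $\nabla_{\mathrm{LC}}\in\CgLC(\Omegaoneg,h)$ lies in the preimage and satisfies $\Psi(\nabla_{\mathrm{LC}})=\nabla_{\mathrm{LC}}$, are exactly the paper's arguments (the paper outsources torsion-freeness to Proposition~\ref{prop:torsionfree.connections} with $\beta=0$; your direct computation is equivalent). The problem is injectivity, which you explicitly leave open, and the route you propose cannot be completed. Your plan is to show that every $\nabla$ in the preimage has $T(\nabla)=0$, so that $\nabla=\Psi(\nabla)$. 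But the preimage condition $(T_h\circ s)(\nabla)=2dh-T_h(d)$ constrains only the symmetric part $s(\nabla)$ of the connection and says nothing about its antisymmetric part. Concretely, fix any $\nabla_0\in\CgLC(\Omegaoneg,h)$ and any nonzero $\beta\in\Hom_{\A,\ZhA}(\Omegaoneg\times\g,\Omegaoneg)$ with $\sigma(\beta)=-\beta$; such $\beta$ exist already for the rank-three free modules of Section~\ref{sec:lc.free.modules} (e.g.\ $\beta(\theta^1,\d_1)=\theta^2$, $\beta(\theta^1,\d_2)=-\theta^1$, all other values zero). Then $\nabla_0+\beta$ is again a regular connection, $s(\nabla_0+\beta)=s(\nabla_0)$ places it in the preimage, and $T(\nabla_0+\beta)=\wedge\beta=2\beta\neq 0$, while $\Psi(\nabla_0+\beta)=\Psi(\nabla_0)$. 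Your identity $T_h(T(\nabla))=2\paraa{T_h(\nabla)-dh}$ is correctly derived, but it is automatically satisfied by these perturbations (both sides equal $2T_h(\beta)$), so no amount of exploiting regularity or the structure of $h$ will force $T(\nabla)=0$. The ``delicate step'' you defer is therefore not a computation you have omitted; it is a claim that fails.

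For what it is worth, the obstruction you ran into is real and not specific to your write-up: the paper's own injectivity step passes from $s(\nabla-\nabla')=0$ to $\nabla=\nabla'$ by invoking that $\nabla,\nabla'$ ``are torsion free'', which is precisely the unestablished assertion your strategy was trying to supply, and which the perturbation above contradicts for the preimage as literally defined. The bijectivity statement does hold if the domain is cut down to the torsion-free (equivalently, Levi-Civita) elements of the preimage, on which $\Psi$ acts as the identity; and surjectivity alone — which both you and the paper prove correctly — already yields the existence criterion of Corollary~\ref{cor:LC,equiv.existence}. So: well-definedness and surjectivity are fine and match the paper; injectivity is a genuine gap in your proposal, and the fixed-point reduction you sketch for closing it is provably unworkable on the stated domain.
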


\begin{proof}
    Let $\nabla\in\paraa{(T_h\circ s)|_{\CgReg(\Omegaoneg)}}^{-1}\paraa{2dh-T_h(d)}$. Proposition~\ref{prop:torsionfree.connections} (with $\beta=0$) implies that $\Psi(\nabla)$  is a torsion free connection on $\Omegaoneg$. Moreover,
    \begin{align*}
        h_{\Psi(\nabla)} + h_{\Psi(\nabla)}^\ast = T_h(\Psi(\nabla)) = \thalf T_h(d) + \thalf (T_h\circ s)(\nabla)
        =\thalf T_h(d)+dh-\thalf T_h(d) = dh
    \end{align*}
    since $\nabla\in\paraa{(T_h\circ s)|_{\CgReg(\Omegaoneg)}}^{-1}\paraa{2dh-T_h(d)}$. Thus, $\Psi(\nabla)$ is compatible with $h$ which, together with torsion freeness, implies that $\Psi(\nabla)\in\CgLC(\Omegaoneg)$. To prove surjectivity we note that if $\nabla\in\CgLC(\Omegaoneg)$ then $\nabla\in\CgReg(\Omegaoneg)$ and
    \begin{align*}
        \Psi(\nabla) = \thalf d + \thalf s(\nabla) = \thalf d+\nabla-\thalf\wedge\nabla
        =\thalf d +\nabla - \thalf d = \nabla
    \end{align*}
    since $\nabla$ is torsion free. Using that $\Psi(\nabla)=\nabla$ together with the fact that $\nabla$ is compatible with $h$, one obtains
    \begin{align*}
        &dh = h_\nabla + h_\nabla^\ast = T_h(\nabla)
        = \thalf T_h(d) + \thalf(T_h\circ s)(\nabla)\implies\\
        &(T_h\circ s)(\nabla) = 2dh-T_h(d).
    \end{align*}
    Hence, $\nabla\in\paraa{(T_h\circ s)|_{\CgReg(\Omegaoneg)}}^{-1}\paraa{2dh-T_h(d)}$ and $\Psi(\nabla)=\nabla$ which implies that $\Psi$ is surjective. Let us now show that $\Psi$ is injective. 
    
    Assume that $\nabla,\nabla'\in\paraa{(T_h\circ s)|_{\CgReg(\Omegaoneg)}}^{-1}\paraa{2dh-T_h(d)}$ and $\Psi(\nabla) = \Psi(\nabla')$, which implies that
    \begin{align*}
        \thalf d + s(\nabla) = \thalf d + s(\nabla')\equivalent s(\nabla-\nabla') = 0.
    \end{align*}
    Next, note that if $s(\beta)=0$ then $\sigma(\beta)=-\beta$ and, consequently, $\wedge(\tfrac{1}{2}\beta)=\tfrac{1}{2}(\beta-\sigma(\beta))=\beta$. Hence, since $s(\nabla-\nabla')=0$, there exists $\alpha\in\Hom_{\A,\ZhA}(\Omegaoneg\times\g,\Omegaoneg)$ such that $\nabla'=\nabla+\wedge\alpha$. Moreover, since $\nabla,\nabla'$ are torsion free, one obtains
    \begin{align*}
        d = \wedge\nabla' = \wedge\nabla + 2(\wedge\alpha) = d+2(\wedge\alpha)\implies
        \wedge\alpha = 0,
    \end{align*}
    which implies that $\nabla'=\nabla$. Hence, $\Psi$ is injective.
\end{proof}

\noindent
In particular, the above results gives an equivalent condition for the existence of Levi-Civita connections, which we state as follows:
\begin{corollary}\label{cor:LC,equiv.existence}
    Let $(\A,\g)$ be a derivation based calculus and let $h$ be a hermitian form on $\Omegaoneg$. There exists a Levi-Civita connection on $\Omegaoneg$ if and only if there exists $\nabla\in\CgReg(\Omegaoneg)$ such that 
    \begin{align*}
        (T_h\circ s)(\nabla) = 2dh-T_h(d).
    \end{align*}
\end{corollary}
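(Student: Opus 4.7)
The plan is to derive this corollary as an essentially immediate consequence of Theorem~\ref{thm:LC.inverse.image}. That theorem already establishes a bijection
\begin{align*}
    \Psi:\paraa{(T_h\circ s)|_{\CgReg(\Omegaoneg)}}^{-1}\paraa{2dh-T_h(d)}\to \CgLC(\Omegaoneg,h),
\end{align*}
so the existence of a Levi-Civita connection is equivalent to the nonemptiness of either side, which is the desired condition.

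For the forward direction, I would take $\nabla \in \CgLC(\Omegaoneg,h)$ and note that by surjectivity of $\Psi$, there exists some element in the preimage $\paraa{(T_h\circ s)|_{\CgReg(\Omegaoneg)}}^{-1}\paraa{2dh-T_h(d)}$; unpacking what membership in this preimage means, this element is in particular a regular connection $\nabla'$ satisfying $(T_h\circ s)(\nabla')=2dh-T_h(d)$. (In fact, inspection of the proof of Theorem~\ref{thm:LC.inverse.image} shows one may simply take $\nabla'=\nabla$, since a torsion free connection is regular and satisfies $\Psi(\nabla)=\nabla$.)

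For the converse, I would take $\nabla\in\CgReg(\Omegaoneg)$ with $(T_h\circ s)(\nabla)=2dh-T_h(d)$. By definition, this says precisely that $\nabla$ lies in the domain of $\Psi$, so $\Psi(\nabla)=\tfrac{1}{2}(d+s(\nabla))\in\CgLC(\Omegaoneg,h)$, giving the desired Levi-Civita connection.

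There is no real obstacle here; the whole content of the corollary is a restatement of the bijection in Theorem~\ref{thm:LC.inverse.image} at the level of nonemptiness of the two sets. The only thing worth being careful about is stating the equivalence symmetrically, i.e., making sure both the hypothesis $\nabla\in\CgReg(\Omegaoneg)$ and the condition $(T_h\circ s)(\nabla)=2dh-T_h(d)$ together correctly characterize membership in $\paraa{(T_h\circ s)|_{\CgReg(\Omegaoneg)}}^{-1}\paraa{2dh-T_h(d)}$, which is immediate from Definition~\ref{def:regular.map} and the definition of $T_h$.
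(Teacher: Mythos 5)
Your proposal is correct and follows exactly the route the paper intends: the corollary is stated as an immediate consequence of Theorem~\ref{thm:LC.inverse.image} (the paper gives no separate proof), and your observation that existence on either side is equivalent to nonemptiness of the domain of the bijection $\Psi$ is precisely the point. The parenthetical remark that one may take $\nabla'=\nabla$ in the forward direction, since a torsion free connection is regular and fixed by $\Psi$, matches the surjectivity argument inside the theorem's proof.
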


\noindent
The results in Theorem~\ref{thm:LC.inverse.image} and Corollary \ref{cor:LC,equiv.existence} characterize the set of Levi-Civita connections and provide a necessary and sufficient condition for their existence. Let us now derive a, perhaps more conceptual, necessary condition for the hermitian form. Namely, as already mentioned in Section~\ref{sec:gen.metric.symmetry}, the hermitian form has to be weakly symmetric in order for Levi-Civita connections exist, as shown in the following result.  

\begin{proposition}\label{prop:LC.necessary.cond}
    Let $(\A,\g,h)$ be a hermitian calculus. If there exists a Levi-Civita connection with respect to $h$ on $\Omegaoneg$ then $(\A,\g,h)$ is weakly symmetric.
\end{proposition}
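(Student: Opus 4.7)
The plan is to work throughout with the auxiliary $1$-form $\omega_\d := \hhi(\varphi(\d^\ast))$. From $\hh(\omega_\d) = \varphi(\d^\ast)$ one reads off $h(\eta,\omega_\d) = \eta(\d^\ast)$ for every $\eta\in\Omegaoneg$, and feeding this together with equation (5) into Definition~\ref{def:weakly.symmetric} gives the reformulation
\begin{align*}
\rho(\d_1,\d_2) = \omega_{\d_1}(\d_2) - \omega_{\d_2}(\d_1).
\end{align*}
The task thus reduces to proving $d\rho = 0$ whenever a Levi-Civita connection $\nabla$ exists on $\Omegaoneg$.

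The first step is to derive the central identity
\begin{align*}
\nabla_\alpha\omega_\beta - \nabla_\beta\omega_\alpha \;=\; \omega_{[\alpha,\beta]}\qquad\text{in }\Omegaoneg.
\end{align*}
To get it, I would apply compatibility of $\nabla$ with $h$ to the equality $h(\omega_{\d_1},\omega_{\d_2^\ast}) = \omega_{\d_1}(\d_2)$, producing
\begin{align*}
\d_0\omega_{\d_1}(\d_2) = (\nabla_{\d_0}\omega_{\d_1})(\d_2) + (\nabla_{\d_0^\ast}\omega_{\d_2^\ast})^\ast(\d_1),
\end{align*}
and combine this with the expansion $d\omega_{\d_1}(\d_0,\d_2) = \d_0\omega_{\d_1}(\d_2) - \d_2\omega_{\d_1}(\d_0) - \omega_{\d_1}([\d_0,\d_2])$ together with the torsion-free statement $d\omega_{\d_1}(\d_0,\d_2) = (\nabla_{\d_0}\omega_{\d_1})(\d_2) - (\nabla_{\d_2}\omega_{\d_1})(\d_0)$. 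The Lie-derivative terms cancel and one is left with $(\nabla_{\d_0^\ast}\omega_{\d_2^\ast} - \nabla_{\d_2^\ast}\omega_{\d_0^\ast})^\ast(\d_1) = \omega_{\d_1}([\d_0,\d_2])$; applying $\ast$ and using $[\d_0,\d_2]^\ast = [\d_0^\ast,\d_2^\ast]$, the identity $\omega_\d(\d')^\ast = \omega_{\d'^\ast}(\d^\ast)$, and the $\ast$-closure of $\g$, yields the displayed formula after the substitution $\alpha = \d_0^\ast$, $\beta = \d_2^\ast$.

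The second step is a direct computation of $d\rho(\d_0,\d_1,\d_2)$. Expanding $d$ on the $2$-form $\rho$ and regrouping the three $\d_i\rho$ terms with the six commutator terms as $d\omega_{\d_i}$-expressions yields
\begin{align*}
d\rho(\d_0,\d_1,\d_2) = &-d\omega_{\d_0}(\d_1,\d_2) + d\omega_{\d_1}(\d_0,\d_2) - d\omega_{\d_2}(\d_0,\d_1)\\
&-\omega_{[\d_0,\d_1]}(\d_2) + \omega_{[\d_0,\d_2]}(\d_1) - \omega_{[\d_1,\d_2]}(\d_0).
\end{align*}
Substituting $d\omega_{\d_i}(\d_j,\d_k) = (\nabla_{\d_j}\omega_{\d_i})(\d_k) - (\nabla_{\d_k}\omega_{\d_i})(\d_j)$ by torsion-freeness and gathering the six $\nabla$-terms into three antisymmetric pairs $(\nabla_{\d_i}\omega_{\d_j} - \nabla_{\d_j}\omega_{\d_i})(\d_k)$, the identity of Step~1 converts each pair into $\omega_{[\d_i,\d_j]}(\d_k)$, which cancels exactly against the explicit commutator terms displayed above.

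The main obstacle is Step~1: disentangling the two separate occurrences of $\nabla$ in the metric-compatibility expansion of $\d_0 h(\omega_{\d_1},\omega_{\d_2^\ast})$ and correctly tracking the $\ast$-operation on derivations (including $[\d_0,\d_2]^\ast = [\d_0^\ast,\d_2^\ast]$) is the delicate part; once the identity $\nabla_\alpha\omega_\beta - \nabla_\beta\omega_\alpha = \omega_{[\alpha,\beta]}$ is in hand, the remaining computation is a mechanical reassembly. Notably, this argument avoids any use of projectivity, so it applies to a general hermitian calculus.
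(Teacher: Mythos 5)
Your proof is correct, and while it is built from the same raw materials as the paper's argument, it is organized around a different key lemma. Both proofs hinge on the map $\d\mapsto\hhi(\varphi(\d^\ast))$ (your $\omega_\d$, the paper's $\vh(\d)$), the identity $\rho(\d_1,\d_2)=\omega_{\d_1}(\d_2)-\omega_{\d_2}(\d_1)$, and the compatibility expansion $\d_0\,\omega_{\d_1}(\d_2)=(\nabla_{\d_0}\omega_{\d_1})(\d_2)+(\nabla_{\d_0^\ast}\omega_{\d_2^\ast})^\ast(\d_1)$, which is literally the paper's equation \eqref{eq:dh.rewrite}. The difference is structural: you first isolate the intermediate identity $\nabla_\alpha\omega_\beta-\nabla_\beta\omega_\alpha=\omega_{[\alpha,\beta]}$ --- the noncommutative analogue of the statement that, for a metric and torsion-free connection, the musical isomorphism intertwines the two torsion tensors --- and then verify $d\rho=0$ by a direct cancellation against the commutator terms in the expansion of $d\rho$. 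The paper instead expands the cyclic sum $\d_1\rho(\d_2,\d_3)+\d_2\rho(\d_3,\d_1)+\d_3\rho(\d_1,\d_2)$ into twelve $\nabla$-terms, reassembles them via torsion-freeness into exterior derivatives of $\vh(\d_i^\ast)^\ast-\vh(\d_i)$, recognizes these as $-\rho(\d_i,\cdot)$, and arrives at a self-referential identity equivalent to $d\rho=0$. Your route buys a reusable, conceptually transparent lemma and a cleaner endgame (the conclusion is $d\rho=0$ outright rather than an equation that rearranges to it); the paper's version avoids stating an auxiliary identity at the cost of a longer single computation. All the delicate points in your Step 1 check out: the cancellation of the $\nabla_{\d}\omega_{\d_1}$-terms, the identity $\omega_\d(\d')^\ast=\omega_{\d'^\ast}(\d^\ast)$, the relation $[\d_0,\d_2]^\ast=[\d_0^\ast,\d_2^\ast]$, and the use of $\ast$-closure of $\g$ to let $\alpha=\d_0^\ast$, $\beta=\d_2^\ast$ range over all of $\g$. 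As you note, no projectivity is used --- but the same is true of the paper's proof, so this is not an added generality.
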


\begin{proof}
    Assume that $(\A,\g,h)$ is a hermitian calculus, and let $\nabla$ be a Levi-Civita connection with respect to $h$. To show that $(\A,\g,h)$ is weakly symmetric, we need to show that $d\rho=0$. Define $\vh:\g\to\Omegaoneg$ as
    \begin{align*}
        \vh(\d) = \hhi\paraa{\varphi(\d^\ast)}
    \end{align*}
    satsifying $\vh(z\d)=z\vh(\d)$ for $z\in\ZA$ and $\d\in\g$ such that $z\d\in\g$. In this notation, the symmetry form may be written as
    \begin{align*}
        \rho(\d_1,\d_2) &= \hi\paraa{\varphi(\d_1^\ast),\varphi(\d_2)}-\hi\paraa{\varphi(\d_2^\ast),\varphi(\d_1)}\\
        &=h\paraa{(\hhi\circ\varphi)(\d_1^\ast),(\hhi\circ\varphi)(\d_2)}
        -h\paraa{(\hhi\circ\varphi)(\d_2^\ast),(\hhi\circ\varphi)(\d_1)}\\
        &=h\paraa{\vh(\d_1),\vh(\d_2^\ast)}-h\paraa{\vh(\d_2),\vh(\d_1^\ast)}.
    \end{align*}
    Next, one notes that
    \begin{align*}
        h\paraa{\omega,\vh(\d^\ast)} = h\paraa{\omega,\hhi(\varphi(\d))} = \varphi(\d)(\omega)=\omega(\d),
    \end{align*}
    and, due to the compatibility of $\nabla$ with $h$, it follows that
    \begin{equation}\label{eq:dh.rewrite}
    \begin{split}
        \d h\paraa{\vh(\d_1),\vh(\d_2^\ast)} 
        &= h\paraa{\nabla_{\d}\vh(\d_1),\vh(\d_2^\ast)}+h\paraa{\vh(\d_1),\nabla_{\d^\ast}\vh(\d_2^\ast)}\\
        &= \paraa{\nabla_{\d}\vh(\d_1)}(\d_2)+\paraa{\nabla_{\d^\ast}\vh(\d_2^\ast)}(\d_1^\ast)^\ast.        
    \end{split}
    \end{equation}
    Now, the expression for $d\rho$ is
    \begin{align*}
        d\rho(\d_1,\d_2,\d_3) 
        &= \d_1\rho(\d_2,\d_3) + \d_2\rho(\d_3,\d_1)+\d_3\rho(\d_1,\d_2)\\
        &\qquad-\rho([\d_1,\d_2],\d_3)-\rho([\d_2,\d_3],\d_1)-\rho([\d_3,\d_1],\d_2)
    \end{align*}
    and using \eqref{eq:dh.rewrite} to rewrite the first three terms gives
    \begin{align*}
        \d_1&\rho(\d_2,\d_3) + \d_2\rho(\d_3,\d_1)+\d_3\rho(\d_1,\d_2)\\
        &=\paraa{\nabla_{\d_1}\vh(\d_2)}(\d_3)+\paraa{\nabla_{\d_1^\ast}\vh(\d_3^\ast)}(\d_2^\ast)^\ast
        -\paraa{\nabla_{\d_1}\vh(\d_3)}(\d_2)-\paraa{\nabla_{\d_1^\ast}\vh(\d_2^\ast)}(\d_3^\ast)^\ast\\
        &+\paraa{\nabla_{\d_2}\vh(\d_3)}(\d_1)+\paraa{\nabla_{\d_2^\ast}\vh(\d_1^\ast)}(\d_3^\ast)^\ast
        -\paraa{\nabla_{\d_2}\vh(\d_1)}(\d_3)-\paraa{\nabla_{\d_2^\ast}\vh(\d_3^\ast)}(\d_1^\ast)^\ast\\
        &+\paraa{\nabla_{\d_3}\vh(\d_1)}(\d_2)+\paraa{\nabla_{\d_3^\ast}\vh(\d_2^\ast)}(\d_1^\ast)^\ast
        -\paraa{\nabla_{\d_3}\vh(\d_2)}(\d_1)-\paraa{\nabla_{\d_3^\ast}\vh(\d_1^\ast)}(\d_2^\ast)^\ast,
    \end{align*}
    and, furthermore, since $\nabla$ is torsion free one obtains
    \begin{align*}
        \d_1&\rho(\d_2,\d_3) + \d_2\rho(\d_3,\d_1)+\d_3\rho(\d_1,\d_2)\\        &=d(\vh(\d_1^\ast))^\ast(\d_2,\d_3)+d(\vh(\d_2^\ast))^\ast(\d_3,\d_1)+d(\vh(\d_3^\ast))^\ast(\d_1,\d_2)\\
        &\qquad-d(\vh(\d_1))(\d_2,\d_3)-d(\vh(\d_2))(\d_3,\d_1)-d(\vh(\d_3))(\d_1,\d_2)\\
        &= d\paraa{\vh(\d_1^\ast)^\ast-\vh(\d_1)}(\d_2,\d_3)
        +d\paraa{\vh(\d_2^\ast)^\ast-\vh(\d_2)}(\d_3,\d_1)\\
        &\qquad+d\paraa{\vh(\d_3^\ast)^\ast-\vh(\d_3)}(\d_1,\d_2).
    \end{align*}
    Noting that $\vh(\d_1)(\d_2)=h\paraa{\vh(\d_1),\vh(\d_2^\ast)}$, one finds
    \begin{align*}
        \paraa{\vh(\d_1^\ast)^\ast-\vh(\d_1)}(\d_2) = -\rho(\d_1,\d_2)
    \end{align*}
    giving
    \begin{align*}
        d\paraa{\vh(\d_1^\ast)^\ast-\vh(\d_1)}(\d_2,\d_3)
        &=\d_2\rho(\d_3,\d_1)+\d_3\rho(\d_1,\d_2)+\rho(\d_1,[\d_2,\d_3])
    \end{align*}
    and consequently
    \begin{align*}
        \d_1\rho(\d_2,\d_3) + \d_2\rho(\d_3,\d_1)+&\d_3\rho(\d_1,\d_2)
        =2\d_1\rho(\d_2,\d_3) + 2\d_2\rho(\d_3,\d_1)+2\d_3\rho(\d_1,\d_2)\\
        &-\rho([\d_1,\d_2],\d_3)-\rho([\d_2,\d_3],\d_1)-\rho([\d_3,\d_1],\d_2)
    \end{align*}
    which is equivalent to $d\rho=0$.
\end{proof}

\noindent
Recall that in the classical geometric setting, where the hermitian form is induced by a Riemannian metric (cf. Section~\ref{sec:gen.metric.symmetry}), $\rho=0$ and the hermitian calculus is always symmetric and satisfies the necessary condition in Proposition~\ref{prop:LC.necessary.cond} for the existence of a Levi-Civita connection. However, even in the classical case, the condition becomes nontrivial if we allow for an arbitrary hermitian form on the complexified cotangent bundle. Moreover, we note that a similar result holds for "weak quantum Levi-Civita connections" \cite[Sec 8.1]{bm:Quantum.Riemannian.geometry} as a consequence of the vanishing of torsion and cotorsion. 

\subsection{A sufficient condition for projective modules}\label{sec:suff.cond.proj.mods}

\noindent
As shown in Proposition~\ref{prop:LC.necessary.cond}, a necessary condition for the existence of a Levi-Civita condition is that the hermitian calculus is weakly symmetric. Furthermore, a necessary and sufficient condition is given in Theorem~\ref{thm:LC.inverse.image}, which may be cumbersome to check in practice. In this section, we will present sufficient conditions for the existence of Levi-Civita connections on finitely generated projective modules. As a consequence, we show that being weakly symmetric is sufficient in the case of free modules with a basis dual to that of the Lie algebra $\g$.  

Let $(\A,\g,h)$ be a finitely generated projective hermitian calculus, and let $\{\theta^i\}_{i=1}^N$ be a set of generators of $\Omegaoneg$. Moreover, let $\{\d_a\}_{a=1}^n$ be a hermitian basis of $\g$ and write $\theta^i_a=\theta^i(\d_a)$ as well as $\theta_{ia}=\theta_i(\d_a)$. Given $\nabla\in\Cg(\Omegaoneg)$ there exists $\Gamma^i_{aj}\in\A$ such that
\begin{align}\label{eq:connection.in.comp}
    \nabla_{\d_a}\theta^i = \Gamma_{aj}^i\theta^j
\end{align}
for $a=1,\ldots,n$ and $i=1,\ldots,N$. Demanding that $\nabla$ is compatible with $h$ amounts to
\begin{align*}
    &\d_a h(\theta^i,\theta^j) = h(\nabla_{\d_a}\theta^i,\theta^j)+h(\theta^i,\nabla_{\d_a}\theta^j)
    \equivalent\\
    &\d_a h^{ij} = \Gamma^i_{ak}h^{kj} + h^{ik}(\Gamma^j_{ak})^\ast\equivalent
    \d_a h^{ij} = \Gamma^i_{ak}h^{kj} + (\Gamma^j_{ak}h^{ki})^\ast.
\end{align*}
Thus, $\nabla$ is compatible with $h$ if there exists $U^{ij}_a\in\A$ such that $(U^{ij}_a)^\ast=U^{ji}_a$ and
\begin{align*}
    \Gamma_{ak}^i h^{kj} = \tfrac{1}{2}\d_a h^{ij} + iU^{ij}_a,
\end{align*}
implying that
\begin{align}\label{eq:metric.conn.in.comp}
    \nabla_{\d_a}\theta^i = \Gamma^i_{ak}\theta^k = \Gamma^i_{ak}h^{kj}h_{jl}\theta^l
    =\paraa{\thalf \d_a h^{ij}+iU^{ij}_a}\theta_j.
\end{align}
Furthermore, demanding $\nabla$ to be torsion free gives
\begin{align*}
    &\paraa{\nabla_{\d_a}\theta^i}(\d_b)-\paraa{\nabla_{\d_b}\theta^i}(\d_a)=d\theta^i(\d_a,\d_b)\equivalent\\
    &\paraa{\thalf\d_a h^{ij}+iU^{ij}_a}\theta_{jb}
    -\paraa{\thalf\d_b h^{ij}+iU^{ij}_b}\theta_{ja}=d\theta^i(\d_a,\d_b)
\end{align*}
which is equivalent to
\begin{align}\label{eq:lc.cond.V}
    \thalf(\d_a h^{ij})\theta_{jb}-\thalf(\d_b h^{ij})\theta_{ja}-d\theta^i(\d_a,\d_b)
    =iU^{ij}_b\theta_{ja}-iU^{ij}_a\theta_{jb}.
\end{align}
In the case $\{\theta^i\}_{i=1}^N$ is a basis for $\Omegaoneg$, it is sufficient to solve the above equation for $U^{ij}_a$ such that $(U^{ij}_a)^\ast=U^{ji}_a$ and then define a Levi-Civita connection using \eqref{eq:metric.conn.in.comp}. However, for projective modules, one has to be more careful to guarantee the existence of the connection. In any case, let us now describe how one can solve \eqref{eq:lc.cond.V} for $U^{ij}_a$ in certain situations.

Multiplying \eqref{eq:lc.cond.V} by $i\theta_{ic}^\ast$ from the left gives
\begin{align}\label{eq:theta.V.theta}
    \tfrac{i}{2}\theta_{ic}^\ast(\d_a h^{ij})\theta_{jb}-\tfrac{i}{2}\theta_{ic}^\ast(\d_b h^{ij})\theta_{ja}
    -i\theta_{ic}^\ast d\theta^i(\d_a,\d_b)
    =\theta_{ic}^\ast U^{ij}_a\theta_{jb}-\theta_{ic}^\ast U^{ij}_b\theta_{ja}.
\end{align}
Let $R_a\in\Mat_n(\A)$ denote the $n\times n$ matrix with entries in $\A$ defined by
\begin{align*}
    (R_a)_{bc} = \theta_{ib}^\ast U^{ij}_a\theta_{jc}
\end{align*}
satisfying 
\begin{align*}
    (R_a)_{bc}^\ast = \paraa{\theta_{ib}^\ast U^{ij}_a\theta_{jc}}^\ast
    =\theta_{jc}U^{ji}_a\theta_{ib} = (R_a)_{cb},
\end{align*}
i.e. $R_a$ is a hermitian matrix.
In this notation, one writes \eqref{eq:theta.V.theta} as
\begin{align}\label{eq:R.R.F}
    (R_a)_{cb} - (R_b)_{ca} = F_{cab}
\end{align}
with
\begin{align}\label{eq:Fcab}
    F_{cab} = 
    \tfrac{i}{2}\theta_{ic}^\ast(\d_a h^{ij})\theta_{jb}
    -\tfrac{i}{2}\theta_{ic}^\ast(\d_b h^{ij})\theta_{ja}
    -i\theta_{ic}^\ast d\theta^i(\d_a,\d_b).
\end{align}
satisfying $F_{cab} = -F_{cba}$. The next result tells us when it is possible to solve \eqref{eq:R.R.F}.

\begin{proposition}\label{prop:R.solution}
  Let $F_{abc}\in\A$ for $a,b,c\in\{1,\ldots,n\}$ such that
  $F_{abc}=-F_{acb}$. Then there exist hermitian
  matrices $R_1,\ldots,R_n\in\Mat_n(\A)$ such that
  \begin{align}
    (R_{a})_{cb}-(R_{b})_{ca}=F_{cab}\label{eq:Req}
  \end{align}
  for $a,b,c\in\{1,\ldots,n\}$, if and only if
  \begin{align}
    F_{abc} + F_{bca}+ F_{cab} + (F_{abc} + F_{bca}+ F_{cab})^\ast  = 0\label{eq:F.nec.cond}
  \end{align}
  for $a,b,c\in\{1,\ldots,n\}$.
\end{proposition}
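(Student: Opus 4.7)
The plan is to prove the two implications separately.

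For necessity, I would apply \eqref{eq:Req} under the cyclic substitutions $(a,b,c)\mapsto(b,c,a)\mapsto(c,a,b)$, producing three instances, and sum them. Exploiting the hermiticity $(R_a)_{xy}=(R_a)_{yx}^\ast$ of each $R_a$, the left-hand side regroups into a sum of three terms of the form $(R_a)_{bc}^\ast-(R_a)_{bc}$, which is manifestly anti-self-adjoint. The right-hand side is $F_{cab}+F_{abc}+F_{bca}$, which must therefore also be anti-self-adjoint, and this is precisely the condition \eqref{eq:F.nec.cond}.

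For sufficiency, I would work orbit-by-orbit with respect to the $S_3$-action on $\{1,\ldots,n\}^3$ by permutation of positions. On a generic orbit of size six with representative $(a,b,c)$ of three distinct indices, hermiticity pairs up the six orbit entries of the $R_x$'s into complex-conjugate pairs, leaving three independent unknowns $p:=(R_a)_{cb}$, $q:=(R_b)_{ca}$, $r:=(R_c)_{ab}$. The three cyclic instances of \eqref{eq:Req} then become
\begin{align*}
p - q = F_{cab}, \qquad q^\ast - r = F_{abc}, \qquad r^\ast - p^\ast = F_{bca},
\end{align*}
a linear system in $p,q,r$ and their conjugates. Solving successively for $q$ and $r$ in terms of $p$ reduces the third equation to a single constraint $p-p^\ast=H$ with $H:=F_{bca}+F_{cab}+F_{abc}^\ast$. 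The hypothesis \eqref{eq:F.nec.cond} is precisely the statement $H+H^\ast=0$, so the choice $p:=\tfrac{1}{2}H$ solves this reduced equation (since then $p^\ast=-\tfrac{1}{2}H$ and $p-p^\ast=H$). One then defines $q$ and $r$ from the first two equations, and the remaining three orbit entries are determined by hermiticity. Orbits with coinciding indices produce strictly simpler systems: the cyclic sum in \eqref{eq:F.nec.cond} is automatically zero there by the antisymmetry $F_{abc}=-F_{acb}$, and the reduced equations are solved by the same procedure.

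The central insight, and what I expect to be the main obstacle to a one-line proof, is that within each orbit the system reduces to a single consistency equation of the form $p-p^\ast=H$, whose solvability condition $H+H^\ast=0$ coincides exactly with the hypothesis \eqref{eq:F.nec.cond}; by contrast, there does not seem to be a uniform closed-form expression for $(R_a)_{bc}$ as a fixed $\complex$-linear combination of permutations of $F$ and its conjugates that satisfies both hermiticity and \eqref{eq:Req} for every triple. Since \eqref{eq:Req} couples entries only within a single $S_3$-orbit, these local choices can be assembled across orbits into hermitian matrices $R_1,\ldots,R_n\in\Mat_n(\A)$ without any further global consistency check.
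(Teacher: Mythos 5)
Your proof is correct and follows essentially the same route as the paper's: necessity by summing the three cyclic instances of \eqref{eq:Req} and using hermiticity of the $R_a$ to make the left side anti-self-adjoint, and sufficiency by noting that the system decouples over $S_3$-orbits and reducing each orbit of pairwise distinct indices to a single equation $p-p^\ast=H$ whose solvability is exactly the antihermiticity of $H$ guaranteed by \eqref{eq:F.nec.cond}. The only cosmetic differences are the elimination order (the paper solves for $(R_c)_{ab}$ first and records the general solution with a free hermitian parameter $H_{cab}$, giving a closed form in the pairwise-distinct case) and that the paper writes out the coinciding-index and $n\leq 2$ cases which you dispatch in one line.
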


\begin{remark}\label{rem:F.only.p.dist}
  Note that since $F_{abc}=-F_{acb}$, condition \eqref{eq:F.nec.cond} is
  trivially satisfied if two of the indices coincide. Hence,
  \eqref{eq:F.nec.cond} only gives a nontrivial condition for a choice of
  pairwise distinct $a,b,c\in\{1,\ldots,n\}$.
\end{remark}

\begin{proof}
  Let us start by fixing a triple of indices
  $a,b,c\in\{1,\ldots,n\}$. By cyclically permuting $a,b,c$ in
  eq~\eqref{eq:Req} one obtains the following system of equations
  \begin{align}
    &(R_{a})_{cb} - (R_{b})_{ca} = F_{cab}\label{eq:Req1}\\
    &(R_{b})_{ac} - (R_{c})_{ab} = F_{abc}\label{eq:Req2}\\
    &(R_{c})_{ba} - (R_{a})_{bc} = F_{bca}\label{eq:Req3}
  \end{align}
  Adding these three equations together, along with their $\ast$-conjugates gives
  \begin{align*}
    F_{abc} + F_{bca}+ F_{cab} + (F_{abc} + F_{bca}+ F_{cab})^\ast  = 0,
  \end{align*}
  by using that $(R_a)_{bc}^\ast=(R_a)_{cb}$, which proves the
  necessity of \eqref{eq:F.nec.cond} for any choice of
  $a,b,c\in\{1,\ldots,n\}$. Next, let us show that \eqref{eq:F.nec.cond}
  is also a sufficient condition for solving \eqref{eq:Req}. We start
  by considering the case $n\geq 3$.

  For $a=b$, equation \eqref{eq:Req} is identically satisfied since
  $F_{caa}=0$. Moreover, for $a=c$ (or equivalently $b=c$), equation \eqref{eq:Req} decouples into equations
  \begin{align}
    (R_{a})_{ab}-(R_b)_{aa} = F_{aab}
  \end{align}
  for each choice of $a\neq b\in\{1,\ldots,n\}$.  Consequently, for
  $a=c$ (or $b=c$) one solves \eqref{eq:Req} by setting
  \begin{align}
    (R_{a})_{ab} = (R_b)_{aa}+F_{aab}\label{Raab.solution}
  \end{align}
  for $a\neq b\in\{1,\ldots,n\}$; i.e. the diagonal elements of the
  matrices $\{R_a\}_{a=1}^n$ will be arbitrary parameters in the
  general solution. It remains to solve \eqref{eq:Req} for pairwise
  distinct $a,b,c\in\{1,\ldots,n\}$.

  To this end, fix a triple $a,b,c\in\{1,\ldots,n\}$ of pairwise
  distinct integers, and using $(R_{a})_{bc}^\ast=(R_a)_{cb}$ one
  writes equations \eqref{eq:Req1}--~\eqref{eq:Req3} as
  \begin{align}
    &(R_{a})_{bc}^\ast - (R_{b})_{ca} = F_{cab}\label{eq:Req1.s}\\
    &(R_{b})_{ca}^\ast - (R_{c})_{ab} = F_{abc}\label{eq:Req2.s}\\
    &(R_{c})_{ab}^\ast - (R_{a})_{bc} = F_{bca}\label{eq:Req3.s}.
  \end{align}
  In this case, one notes that \eqref{eq:Req} decouples into such sets
  of three equations for every choice of pairwise distinct
  $a,b,c\in\{1,\ldots,n\}$. From \eqref{eq:Req1.s} one obtains
  \begin{align*}
    (R_b)_{ca} = (R_{a})_{bc}^\ast-F_{cab}\implies
    (R_b)_{ca}^\ast = (R_{a})_{bc}-F_{cab}^\ast
  \end{align*}
  which, inserted in \eqref{eq:Req2.s}, gives
  \begin{align*}
    (R_a)_{bc} = (R_c)_{ab}+F_{abc}+F_{cab}^\ast.
  \end{align*}
  Finally, inserting the above equation in \eqref{eq:Req3.s} gives
  \begin{align}
    (R_c)_{ab}^\ast-(R_c)_{ab} = F_{abc} + F_{bca} + F_{cab}^\ast.\label{eq:Rcab.final}
  \end{align}
  The left hand side of the above equation is clearly antihermitian,
  and the assumption \eqref{eq:F.nec.cond} implies that the right hand
  side is also antihermitian. Hence, \eqref{eq:Rcab.final} has the
  general solution
  \begin{align*}
    (R_c)_{ab} = -\tfrac{1}{2}\paraa{F_{abc}+F_{bca}+F_{cab}^\ast} + H_{cab}
  \end{align*}
  for arbitrary $H_{cab}=H_{cab}^\ast\in\A$, which implies that the
  general solution of \eqref{eq:Req1.s}--~\eqref{eq:Req3.s} is given
  by
  \begin{align}
    &(R_{a})_{bc} = \tfrac{1}{2}\paraa{F_{abc}-F_{bca}+F_{cab}^\ast} + H_{cab}\\
    &(R_{b})_{ca} = \tfrac{1}{2}\paraa{F_{abc}^\ast-F_{bca}^\ast-F_{cab}} + H_{cab}\\
    &(R_{c})_{ab} = -\tfrac{1}{2}\paraa{F_{abc}+F_{bca}+F_{cab}^\ast} + H_{cab}
  \end{align}
  for arbitrary $H_{cab}=H_{cab}^\ast\in\A$, which together with
  \eqref{Raab.solution} gives the general solution of
  \eqref{eq:Req}. This shows that \eqref{eq:F.nec.cond} is a sufficient
  condition for solving \eqref{eq:Req} for $n\geq 3$.

  Finally, let us consider the cases $n=1,2$. As noted in
  Remark~\ref{rem:F.only.p.dist}, condition \eqref{eq:F.nec.cond} is
  trivially satisfied unless $a,b,c$ are pairwise distinct, which is
  not possible for $n\in\{1,2\}$.  For $n=1$ is is clear that
  \eqref{eq:Req} is satisfied for any choice of $(R_1)_{11}\in\A$. For
  $n=2$, equation \eqref{eq:Req} reduces to
  \begin{align*}
    &(R_1)_{12}-(R_2)_{11} = F_{112}\qquad\quad(R_2)_{21}-(R_{1})_{22} = F_{221}
  \end{align*}
  which is solved by setting
  \begin{align*}
    &(R_1)_{12} = (R_2)_{11} + F_{112}\qquad\quad (R_2)_{21} = (R_{1})_{22} + F_{221}
  \end{align*}
  for arbitrary $(R_2)_{11},(R_1)_{22}\in\A$. 
\end{proof}

\noindent
One finds that $F_{abc}$, as defined in \eqref{eq:Fcab}, satisfies $F_{abc}=-F_{acb}$ as well as
\begin{align*}
    F_{abc} + &F_{bca} + F_{cab}
    +F_{abc}^\ast + F_{bca}^\ast + F_{cab}^\ast \\
    &=\paraa{i\theta_i^\ast(d h^{ik})\theta_k
    -i\theta_i^\ast d\theta^i+i(d\theta^i)^\ast \theta_i}(\d_a,\d_b,\d_c)\\
    &=id\rho(\d_a,\d_b,\d_c),
\end{align*}
by recalling from Lemma~\ref{lemma:rho.in.generators} the expression for $d\rho$ in terms of generators. Thus, if the calculus is weakly symmetric then one can use Proposition~\ref{prop:R.solution} to find hermitian matrices $\{R_1,\ldots,R_n\}$ solving \eqref{eq:Req}. Let us now use these results to derive a sufficient condition for weakly symmetric finitely generated projective hermitian calculi.

\begin{proposition}\label{prop:LC.sufficient.cond}
    Let $(\A,\g,h)$ be a weakly symmetric finitely generated projective hermitian calculus, let $\{\theta^i\}_{i=1}^N$ be generators of $\Omegaoneg$ and let $\{\d_a\}_{a=1}^n$ be a hermitian basis of $\g$. If 
    \begin{align*}
        \d(h^{ij}h_{jk})\theta^k=0,
    \end{align*}
    for $i=1,\ldots,N$ and $\d\in\g$, and
    there exist $\varphi^{ai}\in\A$ such that
    \begin{align*}
        &\varphi^{ai}\theta_{ib}=\delta^a_b\mid\\
        &(\theta_{ia}\varphi^{aj})^\ast\theta^i=\theta^j
    \end{align*}
    for $a,b=1,\ldots,n$ and $j=1,\ldots,N$ (where $\theta_{ia}=\theta_i(\d_a)$), then there exists a Levi-Civita connection on $\Omegaoneg$.
\end{proposition}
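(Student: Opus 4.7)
My plan is to build the Levi-Civita connection in the form $\nabla_{\d_a}\theta^i = \bigl(\tfrac12\d_ah^{ij}+iU^{ij}_a\bigr)\theta_j$ prepared in \eqref{eq:metric.conn.in.comp}, so that producing a Levi-Civita connection reduces to producing coefficients $U^{ij}_a\in\A$ with $(U^{ij}_a)^\ast=U^{ji}_a$ (metric compatibility) that satisfy \eqref{eq:lc.cond.V} (torsion freeness). Left-multiplying \eqref{eq:lc.cond.V} by $i\theta_{ic}^\ast$ collapses it to the matrix system \eqref{eq:R.R.F}, $(R_a)_{cb}-(R_b)_{ca}=F_{cab}$, with $(R_a)_{bc}=\theta_{ib}^\ast U^{ij}_a\theta_{jc}$ and $F_{cab}$ as in \eqref{eq:Fcab}. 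Comparing $F_{abc}+F_{bca}+F_{cab}+(\cdots)^\ast$ with the formula for $d\rho$ supplied by Lemma~\ref{lemma:rho.in.generators} shows that this sum equals $i\,d\rho(\d_a,\d_b,\d_c)$, so the weak symmetry hypothesis is precisely the compatibility condition \eqref{eq:F.nec.cond}, and Proposition~\ref{prop:R.solution} furnishes hermitian matrices $R_1,\dots,R_n\in\Mat_n(\A)$ solving the reduced equation.

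To invert the pre-multiplication I set $U^{ij}_a:=\varphi^{bi\ast}(R_a)_{bc}\varphi^{cj}$. Condition~2a, $\varphi^{ai}\theta_{ib}=\delta^a_b\mid$, immediately collapses $\theta_{ib}^\ast U^{ij}_a\theta_{jc}$ to $(R_a)_{bc}$, and hermiticity of $R_a$ gives $(U^{ij}_a)^\ast=U^{ji}_a$, so metric compatibility is automatic. Writing $G^i_{ab}$ for the LHS of \eqref{eq:lc.cond.V}, a short calculation then yields $iU^{ij}_b\theta_{ja}-iU^{ij}_a\theta_{jb}=-i\varphi^{ci\ast}F_{cab}$, while $F_{cab}=i\theta_{kc}^\ast G^k_{ab}$ by \eqref{eq:Fcab}, so torsion freeness is equivalent to the componentwise identity $G^i_{ab}=(\theta_{kc}\varphi^{ci})^\ast G^k_{ab}$. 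This is strictly stronger than the matrix equation supplied by Proposition~\ref{prop:R.solution}, and promoting the matrix solution to a componentwise one is the main obstacle of the proof.

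I expect to overcome this obstacle by exploiting the left-$\A$-linearity of the torsion. The candidate $\nabla^{\mathrm{can}}_{\d_a}\theta^i:=\tfrac12(\d_a h^{ij})\theta_j$ is a bona fide left connection on $\Omegaoneg$ because under condition~1 the Grassmann connection computed at the end of Section~\ref{sec:conn.comp.herm} simplifies to exactly this formula (the $\tfrac12\d_a(h^{ij}h_{jk})\theta^k$ term being killed by $\d(h^{ij}h_{jk})\theta^k=0$), and its torsion satisfies $(T(\nabla^{\mathrm{can}})\theta^i)(\d_a,\d_b)=G^i_{ab}$. Since $T(\nabla^{\mathrm{can}})\colon\Omegaoneg\to\Omegabg{2}$ is left-$\A$-linear and evaluation of a $2$-form at $(\d_a,\d_b)$ is also left-$\A$-linear, applying $T(\nabla^{\mathrm{can}})$ to the identity $(\theta_{kc}\varphi^{ci})^\ast\theta^k=\theta^i$ of condition~2b and then evaluating at $(\d_a,\d_b)$ yields exactly $(\theta_{kc}\varphi^{ci})^\ast G^k_{ab}=G^i_{ab}$, closing the gap. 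Finally, the tensorial correction $\theta^i\mapsto iU^{ij}_a\theta_j$ can be added to $\nabla^{\mathrm{can}}$ via the same Grassmann-type lift-and-project construction (condition~1 once more absorbing the $\d(h^{ij}h_{jk})\theta^k$ obstruction), producing a connection $\nabla$ on $\Omegaoneg$ that is compatible with $h$ by construction and torsion free by the preceding argument, so $\nabla\in\CgLC(\Omegaoneg,h)$.
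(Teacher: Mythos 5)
Your overall strategy coincides with the paper's: reduce torsion freeness to the matrix system $(R_a)_{cb}-(R_b)_{ca}=F_{cab}$, identify the solvability condition of Proposition~\ref{prop:R.solution} with $d\rho=0$ via Lemma~\ref{lemma:rho.in.generators}, and lift the solution back through $\varphi$. Your observation that the componentwise identity $(\theta_{kc}\varphi^{ci})^\ast G^k_{ab}=G^i_{ab}$ follows from the left-$\A$-linearity of the torsion of $\nabla^{\mathrm{can}}_{\d_a}\theta^i=\tfrac12(\d_ah^{ij})\theta_j$ applied to hypothesis~2b is a genuinely nice conceptual repackaging of the paper's direct computation, which instead derives $(\theta_{ia}\varphi^{aj})^\ast=h^{jk}h_{ki}$ from hypothesis~2b and then uses condition~1.

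There is, however, a real gap in the final step. You set $U^{ij}_a=(\varphi^{bi})^\ast(R_a)_{bc}\varphi^{cj}$, whereas the paper takes $U^{ik}_a=(\varphi^{cl}h_{lj}h^{ji})^\ast(R_a)_{cb}\varphi^{bm}h_{mn}h^{nk}$, i.e.\ it sandwiches $R_a$ between the projection $h^{ij}h_{jk}$ on both indices, and these factors are not cosmetic. First, for $\nabla=\nabla^{\mathrm{can}}+\alpha$ with $\alpha(\theta^i,\d_a)=iU^{ij}_a\theta_j$ to descend to the projective module one must check that $f_i\theta^i=0$ implies $f_iU^{ij}_a\theta_j=0$; the paper gets this from $f_iU^{ij}_a=f_ih^{ik}h_{kl}U^{lj}_a=0$ since $f_ih^{ik}=h(f_i\theta^i,\theta^k)=0$, but with your $U$ one would need $f_i(\varphi^{bi})^\ast=0$, which no hypothesis guarantees (conditions~2a and~2b only control the contractions of $\varphi$ with $\theta_{ib}$ and $\theta_{ia}$, not the complementary components). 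Second, metric compatibility is not automatic from $(U^{ij}_a)^\ast=U^{ji}_a$: writing out $h(\nabla_{\d_a}\theta^i,\theta^j)+h(\theta^i,\nabla_{\d_a}\theta^j)$, the $U$-contribution is $i\paraa{U^{ik}_ah_{kl}h^{lj}-h^{il}h_{lk}U^{kj}_a}$, which cancels only because the paper's $U$ satisfies $U^{ik}_ah_{kl}h^{lj}=U^{ij}_a=h^{il}h_{lk}U^{kj}_a$; equivalently, reading $\nabla_{\d_a}\theta^i=(\tfrac12\d_ah^{ij}+iU^{ij}_a)\theta_j$ back into the derivation of \eqref{eq:metric.conn.in.comp} gives $\Gamma^i_{ak}h^{kj}=(\tfrac12\d_ah^{il}+iU^{il}_a)h_{lk}h^{kj}$, which recovers $\tfrac12\d_ah^{ij}+iU^{ij}_a$ only under this projection invariance. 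Your appeal to a ``Grassmann-type lift-and-project'' with condition~1 ``absorbing the obstruction'' does not address either point: condition~1 controls derivatives of $h^{ij}h_{jk}$, not the algebraic junk in $\varphi$, and projecting changes $U$, after which the torsion equation must be rechecked. The repair is to insert the projection factors from the outset, as the paper does; your linearity argument then still closes the torsion computation, and the whole issue disappears only in the free case, whereas the proposition is stated for projective modules.
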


\begin{proof}
    Let us first note that if $(\theta_{ia}\varphi^{aj})^\ast\theta^i=\theta^j$ then
    \begin{align*}
        (\theta_{ia}\varphi^{aj})^\ast 
        = \paraa{h_{ik}h^{kl}\theta_{la}\varphi^{aj}}^\ast
        = \paraa{\theta_{la}\varphi^{aj}}^\ast h^{lk}h_{ki} 
        =h^{jk}h_{ki}
    \end{align*}
    which is equivalent to
    \begin{equation}\label{eq:theta.varphi.eq.hh}
        \theta_{ia}\varphi^{aj} = h_{ik}h^{kj}. 
    \end{equation}
    Now, set
    \begin{align*}
        F^i_{ab} = 
        \tfrac{i}{2}(\d_a h^{ij})\theta_{jb}
        -\tfrac{i}{2}(\d_b h^{ij})\theta_{ja}
        -id\theta^i(\d_a,\d_b)
    \end{align*}
    and $F_{cab}=\theta^\ast_{ic}F^i_{ab}$, clearly satisfying $F_{cab}=-F_{cba}$. Since $d\rho=0$, we let $\{R_1,\ldots,R_n\}\subseteq\Mat_n(\A)$ be hermitian matrices solving \eqref{eq:Req} in Proposition~\ref{prop:R.solution}. Next, set
    \begin{align*}
        U^{ik}_a = (\varphi^{cl}h_{lj}h^{ji})^\ast (R_a)_{cb}\varphi^{bm}h_{mn}h^{nk},
    \end{align*}
    from which it follows that $(U_a^{ik})^\ast=U^{ki}_a$ as well as
    \begin{align*}
        h^{ij}h_{jk}U^{kl}_a = U^{il}_a\qand
        U^{ij}_a h_{jk}h^{kl} = U^{il}_a.
    \end{align*}
    Furthermore, using that (cf. \eqref{eq:theta.varphi.eq.hh})
    \begin{align*}
        (\varphi^{ck}h_{kj}h^{ji})^\ast\theta_{lc}^\ast
        =(\theta_{lc}\varphi^{ck}h_{kj}h^{ji})^\ast
        =(h_{lm}h^{mk}h_{kj}h^{ji})^\ast
        =(h_{lm}h^{mi})^\ast = h^{im}h_{ml}
    \end{align*}
    one finds that
    \begin{align*}
        U^{ik}_a\theta_{kb}-U^{ik}_b\theta_{ka}
        &=(\varphi^{cl}h_{lj}h^{ji})^\ast\paraa{(R_a)_{cb}-(R_b)_{ca}}
        =(\varphi^{cl}h_{lj}h^{ji})^\ast F_{cab}\\
        &=h^{ik}h_{kl}\paraa{\tfrac{i}{2}(\d_a h^{lj})\theta_{jb}
        -\tfrac{i}{2}(\d_b h^{lj})\theta_{ja}
        -id\theta^l(\d_a,\d_b)}.
    \end{align*}
    Since $(\d_a h^{ik}h_{kl})\theta^l=0$ one obtains
    \begin{equation}
        \begin{split}
        U^{ik}_a\theta_{kb}-U^{ik}_b\theta_{ka}
        &=\tfrac{i}{2}(\d_a h^{ik}h_{kl}h^{lj})\theta_{jb}
        -\tfrac{i}{2}(\d_b h^{ik}h_{kl}h^{lj})\theta_{ja}
        -id(h^{ik}h_{kl}\theta^l)(\d_a,\d_b)\\
        &=\tfrac{i}{2}(\d_a h^{ij})\theta_{jb}
        -\tfrac{i}{2}(\d_b h^{ij})\theta_{ja}
        -id\theta^i(\d_a,\d_b) = F^i_{ab}.\label{eq:UU.Fiab}    
        \end{split}
    \end{equation}
    Let us now construct a connection as in \eqref{eq:metric.conn.in.comp}, i.e. for $f_i\theta^i\in\Omegaoneg$ we set
    \begin{align}\label{eq:def.LC.proj.mod}
        \nabla_{\d_a}(f_i\theta^i)
        &=f_i\paraa{\tfrac{1}{2}(\d_a h^{ik})\theta_k+iU^{ik}_a\theta_k} + (\d_a f_i)\theta^i.
    \end{align}
    For this to be well-defined on the projective module $\Omegaoneg$, one needs to check that $f_i\theta^i=0$ implies that $\nabla_{\d_a}(f_i\theta^i)=0$. Thus, assuming $f_i\theta^i=0$, one computes
    \begin{align*}
        \nabla_{\d_a}(f_i\theta^i)
        &=f_i\paraa{\tfrac{1}{2}(\d_a h^{ik})\theta_k+iU^{ik}_a\theta_k} + (\d_a f_i)\theta^i\\
        &=\tfrac{1}{2}(\d_a f_ih^{ik})\theta_k-\tfrac{1}{2}(\d_a f_i)\theta^i+if_iU^{ik}_a+(\d_af_i)\theta^i
        =\tfrac{1}{2}(\d_a f_i)\theta^i
    \end{align*}
    since $f_ih^{ik}=h(f_i\theta^i,\theta^k)=0$ and $f_iU^{ik}_a=f_ih^{ij}h_{jk}U^{kl}_a=0$. Moreover, using that $(\d_a h^{ij}h_{jk})\theta^k=0$, one finds that
    \begin{align*}
        \nabla_{\d_a}(f_i\theta^i)
        &=\thalf(\d_a f_i)\theta^i
        = \thalf(\d_a f_i)h^{ij}h_{jk}\theta^k 
        = \thalf\d_{a}(f_ih^{ij}h_{jk})\theta^k = 0
    \end{align*}
    since $f_ih^{ij}=0$. We conclude that \eqref{eq:def.LC.proj.mod} defines a connection on $\Omegaoneg$. The connection is compatible with $h$; namely,
    \begin{align*}
        \d_ah^{ij}-&h(\nabla_{\d_a}\theta^i,\theta^j)-h(\theta^i,\nabla_{\d_a}\theta^j)\\
        &= \d_a h^{ij}-\paraa{\thalf(\d_a h^{ik})+iU^{ik}_a}h_{kl}h^{lj}
        -h^{il}h_{lk}\paraa{\thalf(\d_a h^{jk})+iU^{jk}_a}^\ast\\
        &= \d_a h^{ij} -\thalf(\d_a h^{ik})h_{kl}h^{lj}-\thalf h^{il}h_{lk}(\d_a h^{kj})
    \end{align*}
    since $iU^{ik}_a h_{kl}h^{lj}+h^{il}h_{lk}(iU^{jk}_a)^\ast=iU^{ij}_a-iU^{ij}_a=0$. Again, using that 
    $(\d_a h^{ij}h_{jk})\theta^k=0$ one finds that
    \begin{align*}
        \d_ah^{ij}-&h(\nabla_{\d_a}\theta^i,\theta^j)-h(\theta^i,\nabla_{\d_a}\theta^j)\\
        &= \d_a h^{ij} -\thalf(\d_a h^{ik})h_{kl}h^{lj}-\thalf h^{il}h_{lk}(\d_a h^{kj})\\
        &= \d_a h^{ij} -\thalf\paraa{h^{jl}h_{lk}(\d_ah ^{ki})}^\ast -\thalf h^{il}h_{lk}(\d_a h^{kj})\\
        &= \d_a h^{ij} -\thalf\paraa{\d_a h^{jl}h_{lk}h ^{ki}}^\ast -\thalf \d_a h^{il}h_{lk}h^{kj}\\
        &= \d_a h^{ij} - \thalf\d_a h^{ij} - \thalf\d_a h^{ij} =0,
    \end{align*}
    showing that $\nabla$ is compatible with $h$. Finally, $\nabla$ is torsion free since
    \begin{align*}
        (\nabla_{\d_a}\theta^i)(\d_b)&-(\nabla_{\d_a}\theta^i)(\d_b)-d\theta^i(\d_a,\d_b)\\
        &=\thalf (\d_a h^{ik})\theta_{kb}+iU^{ik}_a\theta_{kb}
        -\thalf (\d_b h^{ik})\theta_{ka}-iU^{ik}_b\theta_{ka}-d\theta^i(\d_a,\d_b)\\
        &=iU^{ik}_a\theta_{kb}-iU^{ik}_b\theta_{ka}-iF^i_{ab} = 0
    \end{align*}
    as shown above. Hence, $\nabla$ is a torsion free connection on $\Omegaoneg$ compatible with $h$.
\end{proof}

\noindent 
Let us apply the result above to the simplest possible situation of a free module with a basis that is dual to a hermitian basis of the Lie algebra $\g$. 

\begin{corollary}\label{cor:LC.free.iff.wsym}
    Assume that $(\A,\g,h)$ is a finitely generated free hermitian calculus such that $\dim(\g)=\operatorname{rk}(\Omegaoneg)=n$. Moreover, assume that there exists a basis $\{\theta^i\}_{i=1}^n$ of $\Omegaoneg$ and a basis 
    $\{\d_a\}_{a=1}^n$ of $\g$ such that $\theta^i(\d_a)=\delta^i_a\mid$ for $i,a=1,\ldots,n$. Then there exists a Levi-Civita connection on $\Omegaoneg$ if and only if $(\A,\g,h)$ is weakly symmetric.
\end{corollary}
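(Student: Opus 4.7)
The plan is to combine the two key results of this section. The forward implication is immediate from Proposition \ref{prop:LC.necessary.cond}, which is a general statement about any hermitian calculus admitting a Levi-Civita connection. All the work therefore lies in the converse, where I would apply Proposition \ref{prop:LC.sufficient.cond} and show that its two technical hypotheses hold automatically in the free dual-basis setting.

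For the first hypothesis, I need $\d(h^{ij}h_{jk})\theta^k = 0$ for every $\d \in \g$. Since $\{\theta^i\}_{i=1}^n$ is a basis of the free module $\Omegaoneg$, the identity $h^{ij}h_{jk}\theta^k = \theta^i$ recorded in Section \ref{sec:ind.cal.proj.mods} can be upgraded (by uniqueness of coefficients with respect to a free basis) to the matrix identity $h^{ij}h_{jk} = \delta^i_k\mid$. Hence $\d(h^{ij}h_{jk}) = 0$ for every $\d \in \g$ and the required identity holds trivially.

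For the second hypothesis, I would exhibit $\varphi^{ai} \in \A$ explicitly. The duality assumption $\theta^j(\d_a) = \delta^j_a \mid$ combined with $\theta_i = h_{ij}\theta^j$ gives $\theta_{ia} = h_{ia}$, so the natural candidate is $\varphi^{ai} := h^{ai}$. With this choice, the two identities demanded by Proposition \ref{prop:LC.sufficient.cond} become $h^{ai}h_{ib} = \delta^a_b\mid$ and $(h_{ia}h^{aj})^\ast\theta^i = \delta_i^j\theta^i = \theta^j$, both of which follow immediately from $h^{ij}h_{jk} = \delta^i_k\mid$. With both hypotheses in place, Proposition \ref{prop:LC.sufficient.cond} produces the desired Levi-Civita connection, using the weak symmetry of $(\A,\g,h)$. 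There is no substantive obstacle here: the corollary is essentially the observation that the extra technical conditions in Proposition \ref{prop:LC.sufficient.cond} become vacuous when $\Omegaoneg$ is free with a basis dual to a hermitian basis of $\g$, so that weak symmetry is both necessary and sufficient in this simplest setting.
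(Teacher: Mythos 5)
Your proposal is correct and takes essentially the same route as the paper: the necessity is quoted from Proposition~\ref{prop:LC.necessary.cond}, and the sufficiency is obtained by checking that the hypotheses of Proposition~\ref{prop:LC.sufficient.cond} hold with $\varphi^{ai}=h^{ai}$ and $h^{ij}h_{jk}=\delta^i_k\mid$ (which, as you note, follows from $\{\theta^i\}$ being a basis). No substantive differences.
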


\begin{proof}
    It follows from Proposition~\ref{prop:LC.necessary.cond} that if there exists a Levi-Civita connection on $\Omegaoneg$ then $(\A,\g,h)$ is weakly symmetric. Conversely, assume that $(\A,\g,h)$ is weakly symmetric and assume that there exists a basis 
    $\{\theta^i\}_{i=1}^n$ such that $\theta^i(\d_a)=\delta^i_a\mid$. This implies that 
    $\theta_{ia}=h_{ik}\theta^k(\d_a)=h_{ia}$. Choosing $\varphi^{ai}=h^{ai}$ gives $\varphi^{ai}\theta_{ib}=h^{ai}h_{ib}=\delta^a_b\mid$ as well as 
    \begin{align*}
        (\theta_{ia}\varphi^{aj})^\ast\theta^i
        &=(h_{ia}h^{aj})^\ast\theta^i = (\delta_i^j\mid)^\ast\theta^i = \theta^j.
    \end{align*}
    Furthermore, $\d_a(h^{ij}h_{jk})\theta^k=\d_a(\delta^i_k\mid)\theta^k=0$. It follows from Proposition~\ref{prop:LC.sufficient.cond} that there exists a Levi-Civita connection on $\Omegaoneg$.
\end{proof}

\begin{remark}
    Note that if $(\A,\g,h)$ is a finitely generated free hermitian calculus such that $\dim(\g)=\operatorname{rk}(\Omegaoneg)=n$, with a basis satisfying $\theta^i(\d_a)=\delta^i_a\mid$ then $\Omegaoneg=\Omegaonebg$, since for $f\in\Omegaonebg$ one finds that
    \begin{align*}
        f(\d_k)\theta^k(\d_i) = f(\d_k)\delta^k_i = f(\d_i) 
    \end{align*}
    for $i=1,\ldots,n$, implying that $f\in\Omegaoneg$.
\end{remark}

\section{Levi-Civita connections on free modules of rank 3}\label{sec:lc.free.modules}

\noindent
As an illustration of the results in this paper, let us explicitly compute Levi-Civita connections for weakly symmetric hermitian calculi where $\Omegaoneg$ is a free module such that $\operatorname{rk}(\Omegaoneg)=\dim(\g)=3$. This is the lowest dimension for which weak symmetry is nontrivial. 

Let $(\A,\g,h)$ be a finitely generated free hermitian calculus such that $\dim(\g)=\operatorname{rk}(\Omegaoneg)=n$, and assume that there exists a basis $\{\theta^i\}_{i=1}^n$ for $\Omegaoneg$ such that $\theta^i(\d_a)=\delta^i_a\mid$ for $i,a=1,\ldots,n$, where $\{\d_a\}_{a=1}^n$ is a hermitian basis for $\g$. For presentational simplicity, we assume $\g$ to be abelian, as nonabelian Lie algebras do not pose any additional difficulties (and make formulas more lengthy). By the duality of the basis of $\Omegaoneg$ and the basis of $\mathfrak{g}$, the basis $\{\theta^i\}_{i=1}^n$ satisfies
\[
(\theta^j)^\ast=\theta^j,\quad[\theta^j,f]=\theta^jf-f\theta^j=0,\quad\theta^j\theta^i=-\theta^i\theta^j\text{ if }i\neq j,\quad d\theta^j=0
\]
where $f\in\A$ and $a,i,j=1,\dots,n$.

Corollary \ref{cor:LC.free.iff.wsym} states that, in the current situation, there exists a Levi-Civita connection if and only if $(\A,\g,h)$ is weakly symmetric, which may be constructed as
\begin{align}\label{eq:LC.def.fin.gen.example}
    \nabla_{\d_a}\theta^i=\paraa{\thalf \d_a h^{ij}+iU^{ij}_a}\theta_j,
\end{align}
for an appropriate choice of $U^{ij}_a\in\A$ such that $(U^{ij}_a)^\ast=U^{ji}_a$.

Recall that the free hermitian calculus $(\A,\g,h)$ is weakly symmetric precisely when 
\[
d\rho=0\equivalent d(\theta_i^\ast\theta^i)=0\equivalent F_{abc} + F_{bca} + F_{cab}
    +F_{abc}^\ast + F_{bca}^\ast + F_{cab}^\ast=0,
\]
where 
\[
F_{cab}= 
    \tfrac{i}{2}\theta_{ic}^\ast(\d_a h^{ij})\theta_{jb}
    -\tfrac{i}{2}\theta_{ic}^\ast(\d_b h^{ij})\theta_{ja}.
\]
for $a,b,c\in\{1,\dots,n\}$.
Next, we note that
\[
\theta_{ib}=\theta_i(\d_b)=h_{ij}\theta^j(\d_b)=h_{ij}\delta^j_b=h_{ib},
\]
giving $\theta_{ib}^\ast=h_{ib}^\ast=h_{bi}$, implying that $(R_a)_{bc}$ and $F_{cab}$ simplify to 
\[
(R_a)_{bc}=h_{bi}U^{ij}_ah_{jc},\quad F_{cab}=-\frac{i}{2}\d_ah_{cb}+\frac{i}{2}\d_bh_{ca},
\]
and $d\rho=0$ becomes 
\begin{align}\label{eq:wsym.condition.nxn}
    \d_a(h_{bc}-h_{bc}^\ast)+\d_b(h_{ca}-h_{ca}^\ast)+\d_{c}(h_{ab}-h_{ab}^\ast)=0
\end{align}
for $a,b,c\in\{1,\dots,n\}$. 

Now consider the case $n=3$. To obtain an expression for the Levi-Civita connection, one needs to find $U^{ij}_a$ given by the equations
\begin{equation}\label{eq:RRF}
    (R_a)_{cb}-(R_b)_{ca}=F_{cab}    
\end{equation}
for $a,b,c\in\{1,2,3\}$ where $(R_a)_{bc}=h_{bi}U^{ij}_ah_{jc}$. As in the proof of Proposition~\ref{prop:R.solution}, these equations are solved by
\[
(R_{a})_{ab} = (R_b)_{aa}+F_{aab}
\]
for $a=c\neq b\in\{1,2,3\}$, and 
\begin{align*}
    &(R_{a})_{bc} = \tfrac{1}{2}\paraa{F_{abc}-F_{bca}+F_{cab}^\ast} + H_{cab}\\
    &(R_{b})_{ca} = \tfrac{1}{2}\paraa{F_{abc}^\ast-F_{bca}^\ast-F_{cab}} + H_{cab}\\
    &(R_{c})_{ab} = -\tfrac{1}{2}\paraa{F_{abc}+F_{bca}+F_{cab}^\ast} + H_{cab}
  \end{align*}
when $a,b,c\in\{1,2,3\}$ are pairwise distinct, for arbitrary hermitian $H_{cab}\in\A$. Letting $R_a$ be the matrix with entries $(R_a)_{bc}$, it follows that
\begin{align*}
    R_1&=
    \begin{pmatrix}
        X_{11}&X_{21}-\frac{i}{2}\d_1h_{12}+\frac{i}{2}\d_2h_{11}&X_{31}-\frac{i}{2}\d_1h_{13}+\frac{i}{2}\d_3h_{11}\\
        X_{21}+\frac{i}{2}\d_1h_{21}-\frac{i}{2}\d_2h_{11}&X_{12}&-\frac{i}{2}\d_2h_{31}+\frac{i}{2}\d_3h_{21}+H_{123}\\
        X_{31}+\frac{i}{2}\d_1h_{31}-\frac{i}{2}\d_3h_{11}&\frac{i}{2}\d_2h_{13}-\frac{i}{2}\d_3h_{12}+H_{123}&X_{13}
    \end{pmatrix}\\
    R_2&=
    \begin{pmatrix}
        X_{21}&X_{12}+\frac{i}{2}\d_2h_{12}-\frac{i}{2}\d_1h_{22}&\frac{i}{2}\d_3h_{12}-\frac{i}{2}\d_1h_{32}+H_{123}\\
        X_{12}-\frac{i}{2}\d_2h_{21}+\frac{i}{2}\d_1h_{22}&X_{22}&X_{32}-\frac{i}{2}\d_2h_{23}+\frac{i}{2}\d_3h_{22}\\
        -\frac{i}{2}\d_3h_{21}+\frac{i}{2}\d_1h_{23}+H_{123}&X_{32}+\frac{i}{2}\d_2h_{32}-\frac{i}{2}\d_3h_{22}&X_{23}
    \end{pmatrix}\\
    R_3&=
    \begin{pmatrix}
        X_{31}&-\frac{i}{2}\d_1h_{32}+\frac{i}{2}\d_2h_{13}+H_{123}&X_{13}+\frac{i}{2}\d_3h_{13}-\frac{i}{2}\d_1h_{33}\\
        \frac{i}{2}\d_1h_{23}-\frac{i}{2}\d_2h_{31}+H_{123}&X_{32}&X_{23}+\frac{i}{2}\d_3h_{23}-\frac{i}{2}\d_2h_{33}\\
        X_{13}-\frac{i}{2}\d_3h_{31}+\frac{i}{2}\d_1h_{33}&X_{23}-\frac{i}{2}\d_3h_{32}+\frac{i}{2}\d_2h_{33}&X_{33}
    \end{pmatrix}
\end{align*}
solve \eqref{eq:RRF} for arbitrary hermitian $X_{ab},H_{123}\in\A$. 

To write down an explicit example of a Levi-Civita connection, let us choose a particular metric of the form
\[
h=(h^{ij})_{i,j=1}^3=
\begin{pmatrix}
    \mathds{1}&0&0\\
    0&0&h_0\\
    0&h_0^\ast&0
\end{pmatrix},\quad h^{-1}=(h_{ij})_{i,j=1}^3=
\begin{pmatrix}
    \mathds{1}&0&0\\
    0&0&(h_0^{-1})^\ast\\
    0&h_0^{-1}&0
\end{pmatrix}
\]
where $h_0\in\A$ is invertible.  Note that we have chosen a metric with off-diagonal parts since, for a purely diagonal metric, condition \eqref{eq:wsym.condition.nxn} is immediately satisfied.

Thus, for the above metric, the condition of weakly symmetric becomes 
\[
\d_1((h_0^{-1})^\ast)-\d_1(h_0^{-1})=0,
\]
giving a necessary condition for the existence of a Levi-Civita connection for a metric of the above form.

Assume now that $H_{123}=0$ and $X_{ab}=0$ for all $a,b,c\in\{1,2,3\}$ except $a=b=1$, keeping $X_{11}$ non-zero to demonstrate the non-uniqueness of the Levi-Civita connection. The matrices $\{R_a\}_{a=1}^3$ become
\begin{align*}
    R_1&=
    \begin{pmatrix}
        X_{11}&0&0\\
        0&0&0\\
        0&0&0
    \end{pmatrix}\qquad
    R_2=
    \begin{pmatrix}
        0&0&-\frac{i}{2}\d_1(h_0^{-1})\\
        0&0&-\frac{i}{2}\d_2((h_0^{-1})^\ast)\\
        \frac{i}{2}\d_1((h_0^{-1})^\ast)&\frac{i}{2}\d_2(h_0^{-1})&0
    \end{pmatrix}\\
    R_3&=
    \begin{pmatrix}
        0&-\frac{i}{2}\d_1(h_0^{-1})&0\\
        \frac{i}{2}\d_1((h_0^{-1})^\ast)&0&\frac{i}{2}\d_3((h_0^{-1})^\ast)\\
        0&-\frac{i}{2}\d_3(h_0^{-1})&0
    \end{pmatrix}.
\end{align*}
By construction $U_a=hR_ah$, where $U_a=(U_a^{ij})_{i,j=1}^3$, giving
\begin{align*}
    U_1&=
    \begin{pmatrix}
        X_{11}&0&0\\
        0&0&0\\
        0&0&0
    \end{pmatrix}\quad
    U_2=
    \begin{pmatrix}
        0&-\frac{i}{2}\d_1(h_0^{-1})h_0^\ast&0\\
        \frac{i}{2}h_0\d_1((h_0^{-1})^\ast)&0&\frac{i}{2}h_0\d_2(h_0^{-1})h_0\\
        0&-\frac{i}{2}h_0^\ast\d_2((h_0^{-1})^\ast)h_0^\ast&0
    \end{pmatrix}\\
    U_3&=
    \begin{pmatrix}
        0&0&-\frac{i}{2}\d_1(h_0^{-1})h_0\\
        0&0&-\frac{i}{2}h_0\d_3(h_0^{-1})h_0\\
        \frac{i}{2}h_0^\ast\d_1((h_0^{-1})^\ast)&\frac{i}{2}h_0^\ast\d_3((h_0^{-1})^\ast)h_0^\ast&0
    \end{pmatrix}.
\end{align*}
The Levi-Civita connection \eqref{eq:LC.def.fin.gen.example} is then given by
\begin{align*}
    \nabla_{\d_1}\theta^1&=iX_{11}\theta^1,\quad\nabla_{\d_1}\theta^2=-\frac{1}{2}h_0\d_1(h_0^{-1})\theta^2,\quad\nabla_{\d_1}\theta^3=-\frac{1}{2}h_0^\ast\d_1((h_0^{-1})^\ast)\theta^3,\\
    \nabla_{\d_2}\theta^1&=\frac{1}{2}\d_1(h_0^{-1})\theta^3,\quad\nabla_{\d_2}\theta^2=-\frac{1}{2}h_0\d_1((h_0^{-1})^\ast)\theta^1-h_0\d_2(h_0^{-1})\theta^2,\quad\nabla_{\d_2}\theta^3=0\\
    \nabla_{\d_3}\theta^1&=\frac{1}{2}\d_1(h_0^{-1})\theta^2,\quad\nabla_{\d_3}\theta^2=0,\quad\nabla_{\d_3}\theta^3=-\frac{1}{2}h_0^\ast\d_1(h_0^{-1})\theta^1-h_0^\ast\d_3(h_0^{-1})\theta^3.
\end{align*}
To get a better understanding for how the various conditions are satisfied, let us explicitly verify that $\nabla$ is torsion free and compatible with $h$. One checks that
\begin{align*}
    h(\nabla_{\d_1}\theta^2,\theta^3)&+h(\theta^2,\nabla_{\d_1}\theta^3)=-\frac{1}{2}h_0\d_1(h_0^{-1})h_0-\frac{1}{2}(h_0^\ast\d_1((h_0^{-1})^\ast)h_0^\ast)^\ast=\\
    &=\frac{1}{2}\d_1(h_0)h_0^{-1}h_0+\frac{1}{2}(\d_1(h_0^\ast)(h_0^{-1})^\ast h_0^\ast)^\ast=\frac{1}{2}\d_1h_0+\frac{1}{2}\d_1(h_0^\ast)^\ast=\d_1h^{23}
\end{align*}
which also implies
\[
h(\nabla_{\d_1}\theta^3,\theta^2)+h(\theta^3,\nabla_{\d_1}\theta^2)=(h(\nabla_{\d_1}\theta^2,\theta^3)+h(\theta^2,\nabla_{\d_1}\theta^3))^\ast=(\d_1h_0)^\ast=\d_1h^{32}.
\]
Analogous computations give
\begin{align*}
    h(\nabla_{\d_2}\theta^2,\theta^3)+h(\theta^2,\nabla_{\d_2}\theta^3)&=\d_2h^{23},\quad h(\nabla_{\d_2}\theta^3,\theta^2)+h(\theta^3,\nabla_{\d_2}\theta^2)=\d_2h^{32},\\
    h(\nabla_{\d_3}\theta^2,\theta^3)+h(\theta^2,\nabla_{\d_3}\theta^3)&=\d_3h^{23},\quad h(\nabla_{\d_3}\theta^3,\theta^2)+h(\theta^3,\nabla_{\d_3}\theta^2)=\d_3h^{32},
\end{align*}
and it is easy to check that for all other triples $a,b,c\in\{1,2,3\}$
\[
\d_ch^{ab}=0=h(\nabla_{\d_c}\theta^a,\theta^b)+h(\theta^a,\nabla_{\d_c}\theta^b),
\]
showing that $\nabla$ is indeed compatible with $h$. Let us now turn to the torsion free condition.

Since $d\theta^i=0$, $\nabla$ is torsion free if $(\nabla_{\d_a}\theta^i)(\d_b)=(\nabla_{\d_b}\theta^i)(\d_a)$. Clearly if $a=b$ this condition is satisfied. One checks that
\begin{align*}
    (\nabla_{\d_1}\theta^1)(\d_2)&=iX_{11}\theta^1(\d_2)=0=\frac{1}{2}\d_1(h_0^{-1})\theta^3(\d_1)=(\nabla_{\d_2}\theta^1)(\d_1),\\
    (\nabla_{\d_1}\theta^1)(\d_3)&=iX_{11}\theta^1(\d_3)=0=\frac{1}{2}\d_1(h_0^{-1})\theta^2(\d_1)=(\nabla_{\d_3}\theta^1)(\d_1),\\
    (\nabla_{\d_1}\theta^2)(\d_3)&=-\frac{1}{2}h_0\d_1(h_0^{-1})\theta^2(\d_3)=0=(\nabla_{\d_3}\theta^2)(\d_1),\\
    (\nabla_{\d_1}\theta^3)(\d_2)&=-\frac{1}{2}h_0^\ast\d_1((h_0^{-1})^\ast)\theta^3(\d_2)=0=(\nabla_{\d_2}\theta^3)(\d_1),
\end{align*}
as well as
\begin{align*}
    (\nabla_{\d_1}\theta^2)(\d_2)&=-\frac{1}{2}h_0\d_1(h_0^{-1})\theta^2(\d_2)=-\frac{1}{2}h_0\d_1((h_0^{-1})^\ast)\theta^1(\d_1)=(\nabla_{\d_2}\theta^2)(\d_1),\\
    (\nabla_{\d_1}\theta^3)(\d_3)&=-\frac{1}{2}h_0^\ast\d_1((h_0^{-1})^\ast)\theta^3(\d_3)=\\
    &=-\frac{1}{2}h_0^\ast\d_1(h_0^{-1})\theta^1(\d_1)-h_0^\ast\d_3(h_0^{-1})\theta^3(\d_1)=(\nabla_{\d_3}\theta^3)(\d_1),
\end{align*}
where one explicitly uses the condition of weak symmetry $\d_1(\hi_0)=\d_1((\hi_0)^\ast)$, showing that $\nabla$ is torsion free and, consequently, a Levi-Civita connection. 
The connection above is defined in terms of an arbitrary (hermitian) parameter $X_{11}$, explicitly demonstrating the non-uniqueness of Levi-Civita connections in this context.

Finally, we note that all of the above is applicable to noncommutative tori. To this end, let $T^n_\theta$ be the noncommutative $n$-dimensional torus, i.e.\ the $\ast$-algebra generated by unitary $\{U_i\}_{i=1}^n$ such that $U_iU_j=q_{ij}U_jU_i$, where $q_{ij}=e^{2\pi i\theta_{ij}}$ for $\theta_{ij}\in\reals$ such that $\theta_{ij}=-\theta_{ji}$. Note that $\theta_{ii}=0$ and $q_{ji}=q_{ij}^{-1}$. Assuming that $\theta_{ij}$ is irrational when $i\neq j$ it follows that $Z(T^n_\theta)\simeq\complex$. 

Consider the standard derivations $\d_aU_j=i\delta_{aj}U_j$ for $a=1,\dots,n$. These are hermitian and $[\d_a,\d_b]=0$ for all $a,b=1,\dots,n$, and we let $\g$ denote the Lie algebra generated by $\{\d_a\}_{a=1}^n$. Then $\Omegaoneg$ is generated by $dU_j$, and $\{dU_j\}_{j=1}^n$ is a basis of $\Omegaoneg$; namely,
\begin{align*}
    f^jdU_j=0&\implies f^jdU_j(\d_a)=0
    \implies f^j\d_a U_j=0
    \implies f^a=0
\end{align*}
for $a=1,\ldots,n$. Moreover, since
\[
\paraa{(dU_i)U_j}(\d_a)=(\d_aU_i)U_j=i\delta_i^aU_iU_j=i\delta^a_iq_{ij}U_jU_i=q_{ij}U_j\d_aU_i
=\paraa{q_{ij}U_jdU_i}(\d_a),
\]
the bimodule structure of $\Omega_\mathfrak{g}^1$ is
\[
(dU_i)U_j=q_{ij}U_jdU_i.
\]
Defining $\theta^j=-iU_j^{-1}dU_j$ it is easy to check that $\{\theta^j\}_{j=1}^n$ is also a basis for $\Omegaoneg$ and, furthermore, that
\[
\theta^j(\d_a)=-iU_j^{-1}dU_j(\d_a)=\delta^j_a\mathds{1}.
\]
Hence, $(T^n_\theta,\g)$ satisfies the prerequisites of Corollary~\ref{cor:LC.free.iff.wsym} and one can apply the results in this section to construct a Levi-Civita connection on $T^n_\theta$.

\section*{Acknowledgements}

\noindent
We would like to thank E. Darp\"o for discussions and for suggesting several improvements to the manuscript. 

\bibliographystyle{alpha}
\bibliography{references}

\end{document}